\newcommand{\dist}{\text{dist}}
\renewcommand{\O}{{\mathcal O}}
\renewcommand{\bar}[1]{{\overline{#1}}}
\newcommand{\F}{{\mathcal F}}
\renewcommand{\P}{{\mathbb P}}
\newcommand{\X}{{\mathcal X}}
\newcommand{\W}{{\mathcal W}}
\newcommand{\R}{\mathbb{R}}
\newcommand{\T}{\mathbb{T}}
\newcommand{\G}{{\mathcal G}}
\renewcommand{\div}{\text{div}}
\newcommand{\N}{\mathbb{N}}
\newcommand{\D}{\nabla}
\newcommand{\E}{\mathbb{E}}
\newcommand{\Z}{\mathbb{Z}}
\newcommand{\eps}{\varepsilon}
\newcommand{\dJ}{\Delta_p^G}
\renewcommand{\phi}{\varphi}
\newtheorem{theorem}{Theorem}
\newtheorem*{theorem*}{Theorem}
\theoremstyle{plain}
\newtheorem{definition}{Definition}
\newtheorem{lemma}{Lemma}
\newtheorem{proposition}{Proposition}
\newtheorem{remark}{Remark}
\title{The game theoretic $p$-Laplacian and semi-supervised learning with few labels}
\author{Jeff Calder\thanks{Department of Mathematics, University of Minnesota. ({\tt jcalder@umn.edu})}}
\begin{document} 
\maketitle
\begin{abstract}
We study the game theoretic  $p$-Laplacian for semi-supervised learning on graphs, and show that it is well-posed in the limit of finite labeled data and infinite unlabeled data. In particular, we show that the continuum limit of graph-based semi-supervised learning with the game theoretic  $p$-Laplacian is a weighted version of the continuous $p$-Laplace equation. We also prove that solutions to the graph $p$-Laplace equation are approximately H\"older continuous with high probability. Our proof uses the viscosity solution machinery and the maximum principle on a graph. 
\end{abstract}

\section{Introduction}
Large scale regression and classification problems are prominent today in data science and machine learning. In a typical problem, we are given a large collection of data, some of which is labeled, and our task is to extend the labels to the larger data set in some meaningful way. \emph{Fully supervised} learning algorithms make use of only the labeled data. They may, for example, learn a parametrized function $f$ on the ambient space that maximally agrees with the labeled data. Depending on the application, one might use support vector machines, or neural networks, for example. Fully supervised algorithms are generally only successful when there is a sufficiently diverse collection of labeled data to learn from. In many applications, labeled data requires human annotation, often by experts, and can be difficult and expensive to obtain.  

On the other hand, due to the generally increasing availability of data, unlabeled data is essentially free. As such, there has recently been significant interest in \emph{semi-supervised} learning algorithms that make use of both the labeled data, as well as the geometric or topological properties of the unlabeled data to achieve superior performance. Semi-supervised learning is used in many applications where a large amount of data is available, but only a very small subset is labeled. Examples include classification of medical images, natural language parsing, text recognition, website classification, protein sequencing to structure problems, and many others \cite{ssl}.

We consider semi-supervised learning on graphs. Here, we are given a weighted graph $G=(\X,\W)$, where $\X$ are the vertices, and $\W=\{w_{xy}\}_{x,y\in \X}$ are nonnegative edge weights. The weights are typically chosen so that $w_{xy}\approx 1$ when $x$ and $y$ are close, and $w_{xy}\approx 0$ when $x$ and $y$ are far apart (see Eq.~\eqref{eq:weights}). The labeled data form a subset $\O \subset \X$ of the vertices called the \emph{observation set}.  Given an unknown real-valued function $g:\X \to \R$, and observations $g(x)$ for $x \in \O$, the goal of semi-supervised learning is to make predictions about the function $g$ at the remaining vertices $x \in \X\setminus \O$. 

Since there are infinitely many ways to extend the labels, the problem is ill-posed without some further assumptions. It is now standard to make the \emph{semi-supervised smoothness assumption}, which says that we should extend the labeled data in the ``smoothest'' way possible, and the degree of smoothness should be locally proportional to the density of the graph.  A widely used approach to the smoothness assumption is Laplacian regularization, which leads to the minimization problem

\begin{equation}\label{eq:L2}
\min_{u:\X\to \R} \sum_{x,y\in \X} w_{xy}^2(u(x) - u(y))^2 \ \ \mbox{ subject to } u(x) = g(x) \mbox{ for all } x \in \O.
\end{equation}

Laplacian regularization was first proposed for learning tasks in~\cite{zhu2003semi}, and has been extensively used since (see \cite{zhou2005learning,zhou2004learning,zhou2004ranking,ando2007learning} and the references therein). It has also been applied to the related problem of manifold ranking~\cite{he2004manifold,he2006generalized,wang2013multi,yang2013saliency,zhou2011iterated,xu2011efficient}, which has applications in object retrieval, for example.

Although Laplacian regularization is widely used and is successful for certain problems, it has been observed that when the number of unlabeled data points far exceeds the number of labeled points, the solution of \eqref{eq:L2} degenerates into a nearly constant label function $u$, with sharp spikes near each labeled data point $x \in \O$~\cite{nadler2009semi,el2016asymptotic}. See Figure \ref{fig:demoL2} for a simple example of this degeneracy. Thus, in the continuum limit as the number of unlabeled points tends to infinity while the number of labels is finite, the solutions of \eqref{eq:L2} do not continuously attain their boundary data on the set $\O$. In this sense, we say that Laplacian regularization is \emph{ill-posed} in this regime. In general, we will say a learning algorithm is \emph{well-posed} in the limit of infinite unlabeled data and finite labeled data if in the continuum limit the learned functions converge to a continuous function that attains the labels $u=g$ on $\O$ continuously. If the continuum limit does not attain the labels continuously, then we say the algorithm is \emph{ill-posed}.  
\begin{figure}
\centering
\subfigure[$p=2$]{\includegraphics[clip=true,trim = 30 20 30 23, width=0.32\textwidth]{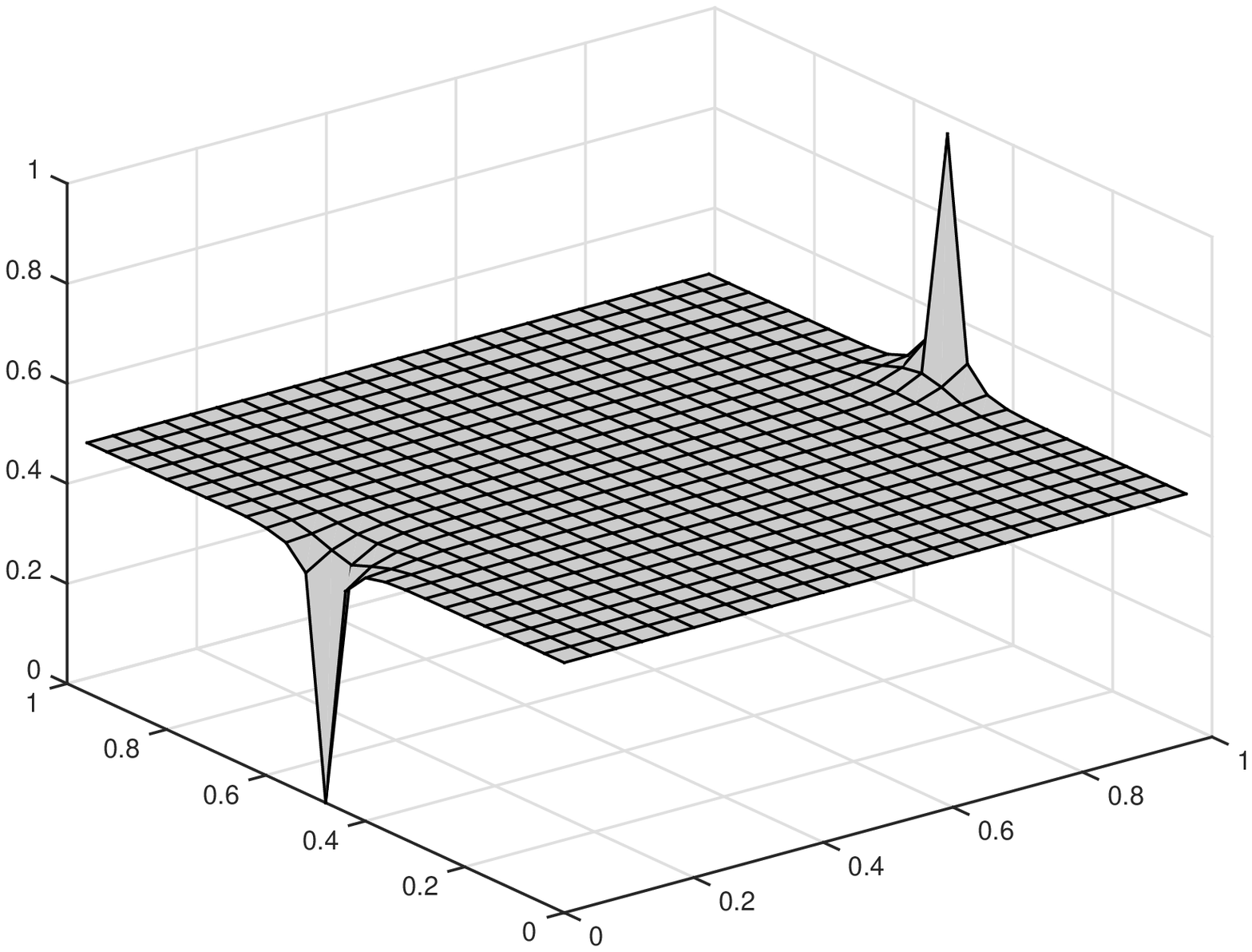}\label{fig:demoL2}}
\subfigure[$p=2.5$]{\includegraphics[clip=true,trim = 30 20 30 23, width=0.32\textwidth]{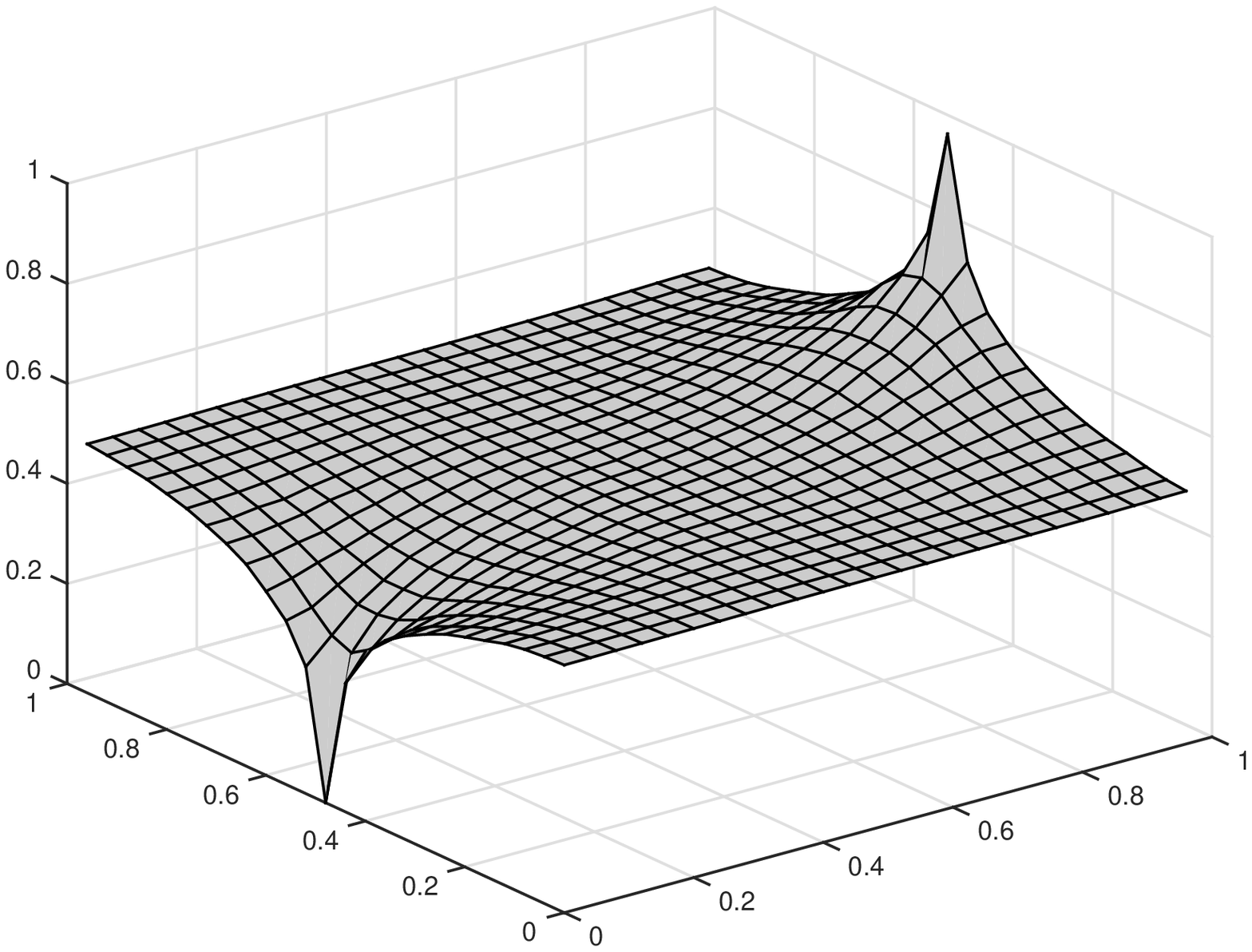}}
\subfigure[$p=\infty$]{\includegraphics[clip=true,trim = 30 20 30 23, width=0.32\textwidth]{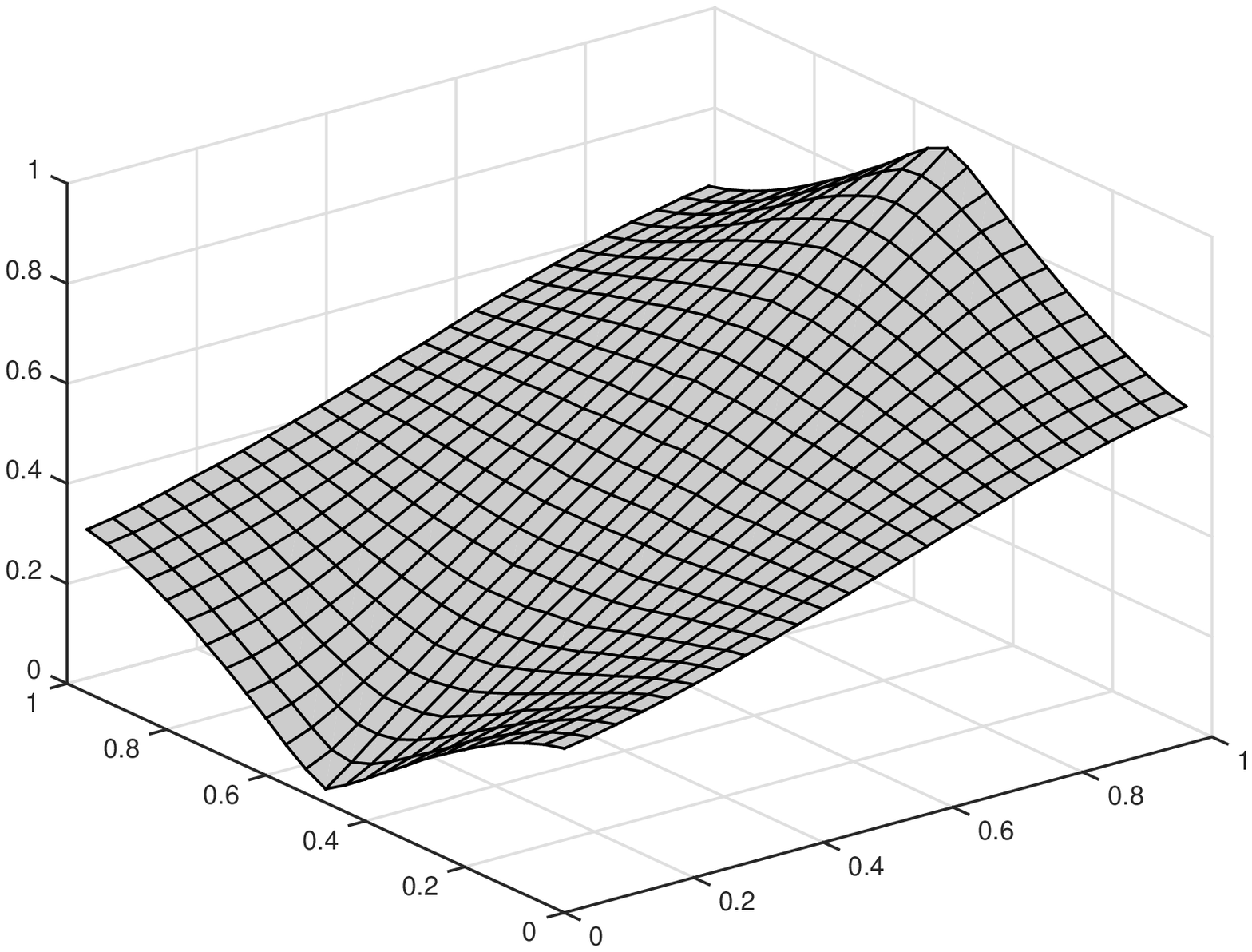}}
\caption{Example of $\ell_p$-Laplacian regularized learning with (a) $p=2$, (b) $p=2.5$ and  (c) $p=\infty$. In each case, the observed (labeled) data is $u(0,0.5)=0$ and $u(1,0.5)=1$, and the unlabeled data is a collection of $n=10^5$ independent and uniformly distributed random variables on $[0,1]^2$. The weights were set to $w_{xy}=1$ if $|x - y| \leq 0.01$ and zero otherwise.}
\label{fig:demo}
\end{figure}

To address the ill-posedness of Laplacian regularization, a class of $\ell_p$-based Laplacian regularization models has recently been proposed \cite{el2016asymptotic}. For $1 \leq p \leq \infty$, $\ell_p$-based Laplacian regularization leads to the optimization problem
\begin{equation}\label{eq:Lp}
\min_{u:\X\to \R} J_p(u) \ \ \mbox{ subject to } u(x) = g(x) \mbox{ for all } x \in \O, 
\end{equation}
where
\begin{equation}\label{eq:Jp}
J_p(u) := \sum_{x,y\in \X}w_{xy}^p|u(x) - u(y)|^p \ \ \mbox{ for } 1 \leq p < \infty,
\end{equation}
and
\begin{equation}\label{eq:Jinfty}
J_\infty(u):= \max_{x,y\in \X} \{w_{xy}|u(x) - u(y)|\}.
\end{equation}

See Figure \ref{fig:demo} for an illustration of $\ell_p$-based Laplacian regularization. As the value of $p$ increases, the smoothness (at least visually) of the minimizer $u$ of \eqref{eq:Lp} increases. The $\ell_p$ models are discussed in~\cite{bridle2013p,alamgir2011phase,el2016asymptotic}, and the $p=\infty$ case--called Lipschitz learning--was studied in~\cite{kyng2015algorithms,luxburg2004distance}. For $p=\infty$, the minimizer is not unique, even when the graph is connected. To see why, suppose we have a minimizer $u$ of $J_\infty(u)$ that satisfies $w_{xy}|u(x)-u(y)| < J_\infty(u)$ for some fixed $x\in \X$ and all $y\in \X$. Then we can modify the value of $u(x)$ by a small amount $\delta>0$ while ensuring that $J_\infty(u)$ is unchanged. This shows that in general there are an infinite number of minimizers of $J_\infty$. To resolve this issue, Kyng et al.~\cite{kyng2015algorithms} suggest to consider the minimizer whose gradient $Du:=(w_{xy}|u(x)-u(y)|)_{x,y\in \X}$ is smallest in the lexicographic ordering on $\R^{|\X|^2}$. This amounts to minimizing the largest value of $Du(x,y)$, and then minimizing the second largest, and then the third largest, and so on. In the continuum setting, this is related to absolutely minimal Lipschitz extensions, which have a long history in analysis~\cite{aronsson2004tour}. 

We note that the $\ell_p$-based Laplacian regularization problem \eqref{eq:Lp} is related in the continuum to the variational problem
\begin{equation}\label{eq:contprob}
\min_{u}\int_\Omega |\nabla u|^p \, dx.
\end{equation}
Minimizers of \eqref{eq:contprob} (for $1 \leq p < \infty$) satisfy the Euler-Lagrange equation 
\begin{equation}\label{eq:pLaplaceEq}
\Delta_p u:= \text{div}(|\nabla u|^{p-2}\nabla u) = 0.
\end{equation}
The operator $\Delta_p$ is called the $p$-Laplacian, and solutions of \eqref{eq:pLaplaceEq} are called $p$-harmonic functions \cite{lindqvist2017notes}.  For $p>2$, the $p$-Laplace equation \eqref{eq:pLaplaceEq} is degenerate, while for $1\leq p <2$ the operator is singular. Solutions of \eqref{eq:pLaplaceEq} can be interpreted in the weak (i.e., distributional) sense, or in the viscosity sense, and solutions are at most $C^{1,\alpha}$ when $p\neq 2$ \cite{lindqvist2017notes}. Notice we can formally expand the divergence in \eqref{eq:pLaplaceEq} to obtain
\[\Delta_p u = |\nabla u|^{p-2}(\Delta u + (p-2)\Delta_\infty u),\]
where the $\infty$-Laplacian $\Delta_\infty$ is given (when $\nabla u\neq 0$) by
\begin{equation}\label{eq:infL}
\Delta_\infty u:= \frac{1}{|\nabla u|^2}\sum_{i,j=1}^d u_{x_ix_j}u_{x_i}u_{x_j}.
\end{equation}
Thus, any $p$-harmonic function satisfies the equation
\begin{equation}\label{eq:gameth}
\Delta u + (p-2)\Delta_\infty u = 0.
\end{equation}
This equation is often called the \emph{game-theoretic} or \emph{homogeneous} $p$-Laplacian\footnote[1]{Due to the identity $\Delta_p u = |\nabla u|^{p-2}(\Delta_1 u + (p-1)\Delta_\infty u)$, it is also common to call the quantity $\Delta_1 u + (p-1)\Delta_\infty u$ the game theoretic $p$-Laplacian \cite{peres2009tug}}, since it arises in two player stochastic tug-of-war games \cite{peres2009tug,lewicka2014game}. At the continuum level, the $p$-Laplace equation \eqref{eq:pLaplaceEq} is equivalent (at least formally) to the game theoretic version \eqref{eq:gameth}, while at the graph level, these two formulations of the \emph{graph} $p$-Laplacian are different.

It was recently proved in \cite{calderLip2017} that in the limit of infinite unlabeled data and finite labeled data, solutions of the Lipschitz learning problem (i.e., \eqref{eq:Lp} with $p=\infty$) converge to an  $\infty$-harmonic function that continuously attains the boundary condition $u=g$ on $\O$. In this sense, we can say that Lipschitz learning is \emph{well-posed} in the limit of finite labeled data and infinite unlabeled data. However, the limiting $\infty$-harmonic function is independent of the distribution of the unlabeled data (i.e., the algorithm ``ignores'' the unlabeled data), and hence Lipschitz learning is fully supervised in this limit \cite{el2016asymptotic,calderLip2017}. 

Along a similar thread, it was shown in  \cite{slepvcev2017analysis} that  $\ell _p$-based Laplacian regularization is well-posed in the limit of finite labeled and infinite unlabeled data when  $p>d$ and $\lim_{n\to \infty}nh_n^p=0$, where $d$ is dimension, $h_n$ is the length scale used to define the weights in the graph (see \eqref{eq:weights}) and $n$ is the number of vertices. This places a restriction on the length scale $h_n \ll (1/n)^{1/p}$ that may be undesirable in practice.  We can see why this restriction is necessary with a basic energy scaling argument. If the $n$ vertices in the graph are randomly sampled from $\R^d$ and vertices are connected on a length scale of order $h_n>0$, then each vertex is connected to on average $O(nh_n^d)$ neighbors, hence the sum defining $J_p(u)$ in \eqref{eq:Jp} has on the order of $n^2h_n^d$ terms. If the function $u$ is a non-constant smooth labeling of the data (i.e., the well-posed regime), then each term in the sum contributes $O(h_n^p)$ to the total, hence the energy scales like $J_p(u) \sim n^2h_n^{d+p}$. On the other hand, if $u$ is constant, except at $L$ labels, then the only nonzero terms in the sum correspond to connections to labeled points, and the size of each term is $O(1)$. Hence, the energy of a constant labeling scales like $J_p(u) \sim Lnh_n^d$. Of course, we want the energy of a constant labeling to be much larger than a non-constant (i.e., $Lnh_n^d \gg n^2 h_n^{d+p}$) to ensure the algorithm is well-posed. This reduces to the requirement that $L \gg nh_n^p$. In the case that $L$ is constant in $n$, then we need $nh_n^p \to 0$ as $n\to \infty$. Let us mention that the authors of \cite{slepvcev2017analysis} showed how to fix this issue by modifying the model \eqref{eq:Lp} so that the boundary condition $u=g$ is required to hold on the dilated set $\O + B(0,2h_n)$. In this case, the length scale restriction $nh_n^p \to 0$ is not required, since we are labeling $L\sim nh_n^d \gg nh_n^p$ data points (recall $p>d$). The disadvantage of the improved model is that it now requires an infinite number of labels, since $nh_n^d\to \infty$ is required for almost sure connectivity of the graph.

In machine learning, it is often assumed that while data may be presented in a very high dimensional space $\R^D$, most types of data lie close to some submanifold of some much smaller dimension. This is called the \emph{manifold assumption}, and the dimension $d\ll D$ of the manifold is often called the \emph{intrinsic dimension} of the data. We note that in the requirement $p>d$ in the well-posedness result from \cite{slepvcev2017analysis}, the value of $d$ should be interpreted as this intrinsic dimension, as opposed to extrinsic dimension $D$.  The intrinsic dimension $d$ can be much larger that $d=2$ in practice. For example, some of the digits in the MNIST dataset\footnote[2]{The MNIST dataset \cite{lecun1998gradient} consists of 70,000 black and white 28x28 pixel images of handwritten digits.} are estimated to have intrinsic dimension between $d=12$ and $d=14$~\cite{hein2005intrinsic,costa2006determining}, and the MNIST dataset has very low complexity compared to modern problems, such as ImageNet.\footnote[4]{The ImageNet dataset~\cite{deng2009imagenet} consists of over $14$ million natural images belonging to over 20,000 categories.} While the Laplacian regularization ($p=2$) problem can be solved efficiently \cite{spielman2004nearly,cohen2014solving}, to our knowledge there are no algorithms in the literature for solving the $\ell_p$-based Laplacian regularization problem \eqref{eq:Lp} efficiently and to scale, especially when $p\gg 2$.  El alaoui et al.~\cite{el2016asymptotic} suggest to use Newton's method, while Kyng et al.~\cite{kyng2015algorithms} suggest convex programming, which requires very high accuracy for large $p$, but neither has been implemented and tested for large scale problems.  To see why it may be difficult to solve \eqref{eq:Lp} when $p$ is large, we recall that the convergence rate for optimization algorithms (such as gradient descent or Newton's method) for solving $\min_u F(u)$ depends on the Lipschitz norm of the gradient
\begin{equation}\label{eq:alpha}
\alpha:= \sup_{u\neq v} \frac{\|\nabla F(u)-\nabla F(v)\|}{\|u-v\|}.
\end{equation}
When $\alpha$ is very large iterative methods are slow to converge~\cite{boyd2004convex}, and we say the problem is poorly conditioned. This is often manifested as an ill-conditioning of the Hessian matrix $\nabla^2 F$. It appears that for $\ell_p$-based learning \eqref{eq:Lp}, $\alpha=\alpha(p)$ grows exponentially with $p$, suggesting the problem is challenging to solve for larger values of $p$.

In this paper we study the game theoretic \emph{graph} $p$-Laplacian as a regularizer for semi-supervised learning on graphs with very few labeled vertices. We show that semi-supervised learning with the game theoretic  $p$-Laplacian is well-posed in the limit of finite labeled data and infinite unlabeled data when $p>d$. 
In particular, we show in this limiting regime that the solutions of the game theoretic graph $p$-Laplacian converge to the unique viscosity solution of a weighted $p$-Laplace equation, and the boundary values $u=g$ on $\O$ are attained continuously in this limit. As in \eqref{eq:gameth}, the game theoretic graph $p$-Laplacian is a linear combination of the  graph $2$-Laplacian and the graph  $\infty$-Laplacian, with the hyperparameter  $p$ appearing as a coefficient. Therefore, the game theoretic  $p$-Laplacian is expected to be well conditioned numerically for all values of  $p$, since the condition number $\alpha$ from \eqref{eq:alpha} can be bounded independently of $p$ (also, see Remark \ref{rem:conditioning} below). Furthermore, the continuum well-posedness of the game-theoretic $p$-Laplacian does not require any upper bound on the length scale $h_n$ (only $h_n \to 0$ is required), which may be more desirable in practice. This work suggests that regularization with the game theoretic $p$-Laplacian may be useful in semi-supervised learning with few labels.   We describe our main results and outline the paper in Section \ref{sec:main} below.

\subsection{Main results}
\label{sec:main}

We now describe our setup and main results. We take periodic boundary conditions and work on the flat Torus $\T^d=\R^d/\Z^d$. Let  $X_n=\{x_1,\dots,x_n\}$ be a sequence of independent and identically distributed random variables on $\T^d$ with probability density $f \in C^2(\T^d)$. We assume that $f>0$ on $\T^d$.  Let $\O \subset \T^d$ be a fixed finite collection of points and set
\begin{equation}\label{eq:graph}
\X_n := X_n\cup \O.
\end{equation}
The points $\X_n$ will form the vertices of our graph. To select the edge weights, let $\Phi:[0,\infty)\to [0,\infty)$ be a $C^2$  function satisfying 
\begin{equation}\label{eq:phi}
\begin{cases}
\Phi (s)\geq 1,&\text{if }s\in (0,1)\\
\Phi (s)=0,&\text{if }s\geq 2.
\end{cases}
\end{equation}
We also assume that there exists  $s_0\in (0,2)$ and $\theta>0$ such that 
\begin{equation}\label{eq:uniquemax}
s\Phi(s) + \theta (s-s_0)^2 \leq s_0 \Phi(s_0) \mbox{ for all } 0\leq s \leq 2.
\end{equation}
 
Select a length scale $h_n>0$ and define
\begin{equation}\label{eq:weights}
w_n(x,y) := \Phi\left( \frac{|x -y|}{h_n} \right), \mbox{ and } d_n(x)=\sum_{y\in \X_n}w_n(x,y).
\end{equation} 
Here, $|x-y|$ denotes the distance on the flat torus $\T^d$.
Let  $\W_n=\left\{ w_n(x,y) \right\}_{x,y\in \X_n}$ and let   $\G_n=(\X_n,\W_n)$ be the graph with vertices  $\X_n$ and edge weights  $\W_n$.  The graph  $2$-Laplacian is given by 
\begin{equation}\label{eq:l2}
L_n^2u(x):=\sum_{y\in \X_n}w_n(x,y)(u(y) -u(x)),
\end{equation}
while the graph  $\infty$-Laplacian is given by  
\begin{equation}\label{eq:li}
L^\infty_nu(x) := \max_{y \in \X_n} w_n(x,y)(u(y) -u(x)) + \min_{y \in \X_n} w_n(x,y) (u(y) -u(x)).
\end{equation}
We define the game theoretic graph $p$-Laplacian~\cite{manfredi2015nonlinear} by
\begin{equation}\label{eq:lp}
L^p_nu(x):=\frac{1}{d_n(x)}L^2_{n}u(x)  +\lambda(p -2)L^\infty_nu(x),
\end{equation}
where
\begin{equation}\label{eq:lam}
\lambda =\frac{C_2}{s_0^2\Phi(s_0)C_1},
\end{equation}
$C_1=\int_{B(0,2)}\Phi (|z|)\, dz$, and $C_2=\frac{1}{2}\int_{B(0,2)}\Phi (|z|)z_1^2\, dz$. The choice of $\lambda$ ensures that for smooth functions $\phi$, $L^p_n\phi=0$ is consistent (as $n\to \infty$ and $h_n\to 0$) with the weighted $p$-Laplacian
\begin{equation}\label{eq:wplap}
\mbox{div}(f^2|\nabla \phi|^{p-2}\nabla \phi) = |\nabla \phi|^{p-2}(\mbox{div}(f^2\nabla \phi) + (p-2)f^2 \Delta_\infty \phi).
\end{equation}

Let  $g:\O\to \R$ be given labels and let  $u_n:\X_n\to \R$ be the solution of the game theoretic  $p$-Laplacian boundary value problem  
\begin{equation}\label{eq:opt}
\left.\begin{array}{rll}
L^p_nu_n&=0&\mbox{in } X_n\\
u_n&=g&\mbox{in } \O.\end{array}\right\}
\end{equation}

Our main result is the following continuum limit. 
\begin{theorem}\label{thm:main}
Let $d < p < \infty$, and suppose that $h_n \to 0$ such that
\begin{equation}\label{eq:hs_finite}
\lim_{n\to \infty} \frac{nh_n^q}{\log(n)} = \infty,
\end{equation}
where $q=\max\{d+4,3d/2\}$.
Then with probability one 
\begin{equation}
u_n \longrightarrow u \ \ \mbox{ uniformly as } \ \ n \to\infty,
\label{eq:conv}
\end{equation}
where $u \in C^{0,\frac{p-d}{p-1}}(\T^d)$ is the unique viscosity solution of the weighted $p$-Laplace equation
\begin{equation}
\left.
\begin{array}{rll}
\div\left(f^2|\nabla u|^{p-2}\nabla u\right) &= 0&\mbox{in } \T^d\setminus \O\\
u&=g&\mbox{on } \O.\\
\end{array}
 \right\}
\label{eq:pLap}
\end{equation}
\end{theorem}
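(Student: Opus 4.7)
The plan is to follow the Barles--Souganidis paradigm for convergence of monotone, consistent, and stable discretizations to the unique viscosity solution. The three ingredients I would establish, in order, are: (i) a precompactness estimate for $\{u_n\}$, so that a uniformly convergent subsequence exists and attains the boundary data; (ii) consistency of $L^p_n$ with the weighted $p$-Laplacian from \eqref{eq:wplap}; and (iii) identification of every subsequential limit as a viscosity solution of \eqref{eq:pLap}, after which uniqueness upgrades subsequential convergence to full convergence of the whole sequence.

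For (i), the discrete maximum principle applied to \eqref{eq:opt} gives $\|u_n\|_\infty \leq \|g\|_\infty$ immediately. The crux is an almost-sure uniform discrete H\"older estimate of the form $|u_n(x)-u_n(y)|\leq C(|x-y|+h_n)^{(p-d)/(p-1)}$ for all $x,y\in \X_n$, with $C$ deterministic; this is the regularity result announced in the abstract. I would prove it by constructing explicit barriers built from the radial profile $r\mapsto r^{(p-d)/(p-1)}$, which is a $p$-harmonic function on $\R^d\setminus\{0\}$ in the continuum, discretizing it, and applying comparison on $\G_n$ between $u_n$ and translates of the barrier centered at each $x_0\in \O$. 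The hypothesis $p>d$ is what makes the barrier continuous at its singular point, while \eqref{eq:hs_finite} is what allows one to verify with high probability that the discrete barrier satisfies $L^p_n\psi\leq 0$ on $\X_n$ by a local consistency estimate. Arzel\`a--Ascoli (applied to linear interpolants) then produces a subsequence $u_{n_k}\to u$ uniformly on $\T^d$, and the H\"older bound forces $u=g$ on $\O$.

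For (ii), I would use Taylor expansion of a $C^3$ test function $\phi$ with $\nabla\phi(x_0)\neq 0$ and replace empirical sums over $\X_n$ by integrals against $f$ using a Bernstein-type concentration inequality; this is where the scaling $nh_n^{d+4}/\log n \to \infty$ is used. The choices of $C_1$, $C_2$, and $\lambda$ in \eqref{eq:lam} are calibrated so that $d_n(x)\approx f(x)C_1h_n^d$, $d_n(x)^{-1}L^2_n\phi(x) \approx (C_2/C_1)h_n^2 f(x)^{-1}\div(f^2\nabla\phi)(x)$, and $L^\infty_n\phi(x)\approx s_0^2\Phi(s_0)h_n^2\Delta_\infty\phi(x)$, yielding
\begin{equation*}
h_n^{-2}L^p_n\phi(x_0) \longrightarrow \frac{C_2}{C_1 f(x_0)}|\nabla\phi(x_0)|^{2-p}\div\bigl(f^2|\nabla\phi|^{p-2}\nabla\phi\bigr)(x_0),
\end{equation*}
which has the correct sign for the viscosity interpretation of \eqref{eq:pLap}. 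For (iii), I would use the standard perturbation trick: if $u-\phi$ has a strict local maximum at $x_0\in \T^d\setminus \O$, then for large $k$ there exist near-maximum points $x_{n_k}\in \X_{n_k}$ of $u_{n_k}-\phi$ with $x_{n_k}\to x_0$, and $L^p_{n_k}u_{n_k}(x_{n_k})=0$ combined with the monotonicity of $L^p_n$ in $u(y)-u(x)$ gives $L^p_{n_k}\phi(x_{n_k})\leq o(1)$; consistency then passes to the viscosity subsolution inequality. The supersolution inequality is symmetric. A standard comparison principle for degenerate elliptic equations on $\T^d\setminus\O$, with Dirichlet data on the finite set $\O$ treated via the already-established continuity of $u$ up to $\O$, gives uniqueness and thus full convergence.

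The main obstacle will be (i), the uniform discrete H\"older estimate. Controlling the $L^\infty_n$ contribution to $L^p_n\psi$ on the support of the barrier requires that, for each $x\in \X_n$, there are random neighbors populating every direction in $\S^{d-1}$ densely enough to realize the discrete $\max$ and $\min$ to $o(h_n)$ accuracy; the angular resolution estimate this demands is precisely the source of the exponent $q=\max\{d+4,3d/2\}$ in \eqref{eq:hs_finite}, with $d+4$ coming from pointwise consistency and $3d/2$ from the uniform covering of directions. The degeneracy of $L^\infty_n$ at flat points is handled in the usual way by restricting viscosity test functions to those with $\nabla\phi\neq 0$, as is standard for the game-theoretic $p$-Laplacian.
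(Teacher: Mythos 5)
Your overall architecture matches the paper's: monotonicity plus consistency plus a discrete H\"older estimate for compactness, then the Barles--Souganidis-style identification of subsequential limits as viscosity solutions, with uniqueness upgrading to full convergence. The consistency step (ii) and the subsolution/supersolution identification (iii) are essentially the paper's Sections 3 and 5, and your attribution of the exponents $d+4$ and $3d/2$ is close to correct (the $3d/2$ comes from controlling the fill distance $r_n=\sup_y\dist(y,\X_n)$, which enters the comparison of $L^\infty_n$ with its nonlocal continuum analogue). However, there are two genuine gaps at exactly the hardest points.

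First, the barrier argument as you describe it does not close. You propose the profile $r\mapsto r^{(p-d)/(p-1)}$ because it is $p$-harmonic; but an exact solution leaves no room to absorb the $O(\delta)$ consistency errors, the $O(r)$ error from the weight $f$, and the $r_n^2h_n^{-3}$ error in the $\infty$-Laplacian term. The paper instead uses $v(x)=|x-y|^\alpha$ with $\alpha$ \emph{strictly} less than $(p-d)/(p-1)$, so that the leading coefficient $\alpha(p-1)+d-p$ is strictly negative and dominates all error terms (and consequently Theorem \ref{thm:holder} only yields exponents $\alpha<(p-d)/(p-1)$, not the endpoint you claim). More seriously, even the strict supersolution fails to satisfy $L^p_nv\leq 0$ within an $O(h_n)$ neighborhood of the singularity $x=y$, where the relative consistency errors blow up like $|x-y|^{-1}h_n$; since the labels live at isolated points, the comparison must be run all the way down to $|x-y|=0$, and this is precisely where well-posedness with finitely many labels is decided. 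The paper patches this with a separate local barrier (Lemma \ref{lem:finesuper}): a sum of jump discontinuities spaced $h_n/2$ apart whose supersolution property relies entirely on the $\min$/$\max$ structure of $L^\infty_n$, glued to the power-law barrier via a minimum. Your proposal has no analogue of this step.

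Second, the uniqueness you invoke is not a ``standard comparison principle.'' The operator $F_p$ degenerates where $\nabla u=0$, and the Dirichlet data sit on a finite set of points, so the Crandall--Ishii--Lions machinery does not apply off the shelf. The paper proves uniqueness (Theorem \ref{thm:unique}) by showing viscosity solutions coincide with the $W^{1,p}$ weak solution, following Juutinen--Lindqvist--Manfredi: one compares the viscosity solution with weak solutions $u_\eps$ of the $\eps$-perturbed equation using a truncated operator on sets bounded away from $\O$, and then passes $\eps\to0$ via energy estimates and Morrey's inequality (this is where $p>d$ enters again). This is a substantive component of the proof, not a citation.
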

Theorem \ref{thm:main} proves that graph-based semi-supervised learning with the game theoretic $p$-Laplacian is well-posed in the limit of finite labeled data and infinite unlabeled data when $p>d$. We prove in Section \ref{sec:conv} that weak distributional solutions of \eqref{eq:pLap} are equivalent to viscosity solutions, so we can also interpret $u$ as the unique weak solution of \eqref{eq:pLap}. By regularity theory for weak solutions of the weighted $p$-Laplace equation \cite{tolksdorf1984regularity,lieberman1988boundary}, we have $u\in C^{1,\alpha}_{loc}(\T^d\setminus \O)$ and if $f$ is smooth then $u\in C^\infty(B(x,r))$ near any point where $\nabla u(x)\neq 0$.

A key step in proving Theorem \ref{thm:main} is a discrete regularity result that is useful to state independently.
\begin{theorem}\label{thm:holder}
For every  $0<\alpha <(p -d)/(p -1)$  there exists  $C,\delta >0$ such that 
\begin{equation}\label{eq:holder}
 \P\left[ \forall x,y\in \X_n,\,|u_n(x) -u_n(y)|\leq C(|x -y|^\alpha  +h^\alpha _n) \right]
\geq 1 -\exp\left(  -\delta nh_n^{q} +C\log (n) \right),
\end{equation}
 where $q=\max\{d +4,3d/2\}.$   
\end{theorem}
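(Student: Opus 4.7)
The plan is to prove Theorem~\ref{thm:holder} via the classical barrier method combined with a comparison principle on the graph $\G_n$. The natural barrier is the radial $\alpha$-power, motivated by the explicit calculation
\[
\Delta u + (p-2)\Delta_\infty u \;=\; \alpha\bigl[(p-1)\alpha - (p-d)\bigr]\,|x-y|^{\alpha-2}
\quad\text{for}\quad u(x) = |x-y|^\alpha,
\]
which is strictly negative precisely when $\alpha < (p-d)/(p-1)$. Thus $|x-y|^\alpha$ is a strict continuum supersolution of the unweighted game-theoretic $p$-Laplacian, and remains one for the weighted version \eqref{eq:wplap} in a small neighborhood of $y$ since the $\nabla f$ corrections are $O(|x-y|^{\alpha-1})$, lower order than the main $O(|x-y|^{\alpha-2})$ term. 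The H\"older exponent in \eqref{eq:holder} is exactly what this barrier provides.

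First I would establish a comparison principle for $L_n^p$: if $v,w:\X_n\to\R$ satisfy $L_n^p v \leq 0 \leq L_n^p w$ on $X_n$ and $w \leq v$ on $\O$, then $w \leq v$ on $\X_n$. This follows by evaluating $L_n^p(w - v)$ at an interior maximizer of $w - v$, using that both $L_n^2$ and $L_n^\infty$ are monotone. Next I would prove a pointwise consistency estimate: for $\varphi \in C^3(\T^d)$ with $\nabla\varphi(x) \neq 0$,
\[
L_n^p \varphi(x) \;\approx\; \frac{C_2}{C_1}\, h_n^{2}\, f(x)^{-2}\, |\nabla\varphi(x)|^{2-p}\,\text{div}\!\bigl(f^2 |\nabla\varphi|^{p-2}\nabla\varphi\bigr)\!(x),
\]
with a random error controlled by Bernstein's inequality. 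The $L_n^2/d_n$ piece is a normalized sum over the $O(nh_n^d)$ neighbors of $x$, whose terms have size $O(h_n)$ and second moment $O(h_n^{d+2})$; matching the $O(h_n)$ Taylor bias forces $n h_n^{d+4} \gtrsim \log n$. The $L_n^\infty$ piece requires finding a neighbor within $O(\sqrt{h_n})$ of the ideal direction $s_0 \nabla\varphi / |\nabla\varphi|$, which by a covering of the unit sphere at this scale demands $n h_n^{3d/2} \gtrsim \log n$. Together these yield the exponent $q = \max\{d+4,3d/2\}$ in \eqref{eq:holder}.

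The barrier argument then runs as follows. Fix $y \in \X_n$ and set $\psi_y(x) := |x-y|^\alpha + h_n^\alpha$. On the annulus $\{K h_n \leq |x-y| \leq \diam(\T^d)\}$, consistency yields $L_n^p \psi_y \leq 0$ with high probability, once $K$ is large enough that the strictly negative continuum value dominates the consistency error. On the ball $\{|x-y| < K h_n\}$, $\psi_y$ is not $C^2$, and one must verify $L_n^p \psi_y \leq 0$ directly from \eqref{eq:lp}, exploiting the H\"older bound $|\psi_y(x) - \psi_y(x')| \leq |x-x'|^\alpha$ and the floor $h_n^\alpha$: this is precisely what the $h_n^\alpha$ addend is designed for. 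Choosing $C>0$ large depending on $\|g\|_\infty$ and $\min_{x\ne y\in\O}|x-y|$, the function $v(x) := u_n(y) + C \psi_y(x)$ satisfies $L_n^p v \leq 0$ on $X_n$ (by $1$-homogeneity of $L_n^p$) and $v \geq u_n$ on $\O$ (using $|u_n|\leq \|g\|_\infty$), so comparison yields $u_n(x) - u_n(y) \leq C(|x-y|^\alpha + h_n^\alpha)$. A symmetric lower barrier and a union bound over the $n$ choices of $y \in \X_n$ produce \eqref{eq:holder} with the $C\log n$ loss in the exponent.

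The main obstacle will be the consistency estimate for $L_n^\infty$: extrema are governed by rare tail fluctuations, so uniform concentration is much more delicate than for the sum $L_n^2$ and is responsible for the worse $3d/2$ threshold in \eqref{eq:hs_finite}. Equally delicate is the direct finite-difference estimate of $L_n^p \psi_y$ in the boundary layer $|x-y|\lesssim h_n$, where the barrier is non-smooth: careful bookkeeping using the $h_n^\alpha$ floor is required to show that the graph operator remains non-positive in this region despite the singular behavior of the radial power.
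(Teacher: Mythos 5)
Your outline correctly identifies the three ingredients---graph comparison principle, consistency of $L_n^p$ away from the singularity, and a radial barrier---and your quantitative calculations for the exponent $q=\max\{d+4,3d/2\}$ (bias/variance tradeoff for $L_n^2$ giving $d+4$, the $r_n$ covering bound for $L_n^\infty$ giving $3d/2$) match the paper's Lemma~\ref{lem:super}. However, there is a genuine gap in the near-origin region $|x-y|\lesssim h_n$, and the idea you propose to fill it cannot work.

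The problem is that $L_n^p$ annihilates constants: both $L_n^2$ and $L_n^\infty$ (and hence $L_n^p$) depend only on differences $u(z)-u(x)$, so $L_n^p(|x-y|^\alpha + h_n^\alpha) \equiv L_n^p(|x-y|^\alpha)$. The ``floor'' $h_n^\alpha$ has exactly zero effect on the supersolution property; it only plays a role later in controlling the final constant $C$. Meanwhile $|x-y|^\alpha$ genuinely fails to be a discrete supersolution close to $y$. If $|x-y|=\eps h_n$ with $\eps\ll 1$, the $\max$ term in $L_n^\infty$ and the averaged $L_n^2$ term each contribute positive quantities of size $O(h_n^\alpha)$ (since neighboring sample points $z$ have $|z-y|\sim h_n$ and $|z-y|^\alpha\gg(\eps h_n)^\alpha$), while the $\min$ term, even when it reaches down to $y$ itself, contributes only $-(\eps h_n)^\alpha$, which is negligible. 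So $L_n^p(|x-y|^\alpha)(x)>0$ there, and no constant shift can repair it. This is exactly where the paper introduces a genuinely new barrier (Lemma~\ref{lem:finesuper}): one adds to $|x-y|^\alpha$ a staircase of jumps spaced $h_n/2$ apart with geometrically decaying amplitudes $M\beta^k h_n^\alpha$. The $\min$ term in $L_n^\infty$ then picks up a large negative jump of fixed size $-M\beta^k h_n^\alpha$ (not the vanishing $-(\eps h_n)^\alpha$), while the $\max$ and $L_n^2$ terms only see the much smaller jumps farther out, giving a strict supersolution uniformly down to $x=y$. This staircase construction, exploiting the $\min/\max$ structure of $L_n^\infty$, is the key idea missing from your proposal.

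A smaller omission: you run the comparison directly for general $y\in\X_n$, choosing $C$ from $\|g\|_\infty$ and $\min_{x\neq z\in\O}|x-z|$, but that only suffices for $y\in\O$ where the boundary data are known. For $y\in X_n$ the paper first establishes the estimate with $y\in\O$ (Step~2), then bootstraps to $y\in X_n$ (Step~3) by symmetry of the barrier $v_y(z)=v_z(y)$; this second pass is needed because $u_n(y)$ for interior $y$ is not a priori controlled relative to the labels at nearby points of $\O$.
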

Theorem \ref{thm:holder} quantifies the way in which the game theoretic $p$-Laplacian regularizes the learning algorithm; it asks that $u_n$ is approximately H\"older continuous.

Several remarks are in order.
\begin{remark}
Notice that \eqref{eq:pLap} is the Euler-Lagrange equation for the variational problem
\[\min_{u:\T^d \to \R} \int_{\T^d} f^2|\nabla u|^p \, dx\]
subject to $u=g$ on $\O$. This variational problem is also the continuum limit of $\ell_p$-based Laplacian regularization~\cite{el2016asymptotic}. Hence, $\ell_p$-based Laplacian regularization and regularization via the game theoretic $p$-Laplacian, while different models at the graph level, are identical in the continuum.
\end{remark}
\begin{remark}
We expect that when $p\leq d$, $u_n$ converges pointwise to a constant $c\in \R$. It would be interesting to prove this and determine the constant $c$.
\end{remark}
\begin{remark}\label{rem:Lp}
The proof of Theorem \ref{thm:main} uses techniques from the theory of viscosity solutions of partial differential equations~\cite{crandall1992user}. The same ideas can be used to prove a similar result for the graph  $p$-Laplacian boundary value problem 
\begin{equation}\label{eq:pp}
 \left.\begin{array}{rll}
\sum_{y\in \X_n}w_n(x,y)|u_n(x) -u_n(y)|^{p -2}(u_n(x) -u_{n}(y))&=0&\mbox{if }x\in  X_n\\
u_n(x)&=g(x)&\mbox{if }x\in  \O,\end{array}\right\}
\end{equation}
which corresponds to  $ \ell _p$-based Laplacian regularization. The main difference in the proof is the consistency result, which we include for \eqref{eq:pp} in the appendix for completeness. This problem was studied very recently with variational (as opposed to PDE) techniques by Slep\v cev and Thorpe \cite{slepvcev2017analysis}.  In this setting, there is an additional requirement that $\lim_{n\to \infty} nh_n^p = 0$, which places an upper bound on the length scale $h_n \ll 1/n^{1/p}$ that is not present in the game theoretic model.
\end{remark}
\begin{remark}\label{rem:conditioning}
Theorem \ref{thm:main} suggests that the game theoretic  $p$-Laplacian can be useful for regularization in graph-based semi-supervised learning with small amounts of labeled data.  We note that the constituent terms in the game theoretic $p$-Laplacian---the graph $2$-Laplacian and $\infty$-Laplacian---are well-understood and efficient algorithms exist for solving both \cite{spielman2004nearly,kyng2015algorithms}. Furthermore, on uniform grids (i.e., the PDE-version), there are efficient algorithms for the game theoretic $p$-Laplacian, such as the semi-implicit scheme of Oberman \cite{oberman2013finite}. This suggests the game theoretic $p$-Laplacian on a graph can be solved efficiently and to scale, making it a good choice for semi-supervised learning with few labels.
\end{remark}
\begin{remark}
If instead of working on the Torus  $\T^d$, we take our unlabeled points to be sampled from a domain  $X_n\subset \Omega\subset \R^d$, then we expect  Theorem \ref{thm:main} to hold  under similar hypotheses, with the additional boundary condition
\begin{equation}\label{eq:neumann}
\frac{\partial u}{\partial \nu}=0\quad \mbox{  on }\partial \Omega.
\end{equation}
Let us outline briefly how we expect the proof to change. The Neumann condition \eqref{eq:neumann} will make an appearance in the consistency result for the graph Laplacian (Theorem \ref{thm:consistency2}), which near the boundary $\partial\Omega$ will involve, for the $2$-Laplacian, the integral
\begin{equation}\label{eq:conint}
L_n^2u(x) \approx n\int_{B(x,2h_n)\cap \Omega}\Phi\left( h_n^{-1}|x-y| \right)(u(y) - u(x)) \, dy.
\end{equation}
Taylor expanding $u$ we obtain an expression of the form
\[L_n^2u(x)\approx Cnh_n^{d+1}\left( \frac{\text{dist}(x,\partial\Omega)}{h} - 1 \right)\frac{\partial u}{\partial \nu} + O(nh^{d+2}).\]
A similar statement holds for the $\infty$-Laplacian. The remainder of the proof will be the same; the final difference is that we will require uniqueness of viscosity solutions to the weighted $p$-Laplace equation with Neumann boundary conditions. We refer the reader to the user's guide \cite[Theorem 7.5]{crandall1992user} for a comparison principle for the generalized Neumann problem. Uniqueness of viscosity solutions will require some boundary regularity, and we expect $\partial \Omega\in C^1$ to be sufficient.
\end{remark}

This paper is organized as follows. In Section \ref{sec:maxprinc} we recall the maximum principle on a graph and prove that \eqref{eq:opt} is well-posed. In Section \ref{sec:consistency} we prove consistency for the game theoretic $p$-Laplacian for smooth functions. In Section \ref{sec:holder} we prove a discrete regularity result (Theorem \ref{thm:holder}) for the game theoretic $p$-Laplacian. Finally, in Section \ref{sec:conv} we prove our main consistency result, Theorem \ref{thm:main}.

\section{The maximum principle}
\label{sec:maxprinc}

Our main tool is the maximum principle. We recall here the maximum principle on a graph, which was proved in a similar setting in \cite{manfredi2015nonlinear}. Our setting is slightly more general, so we include a proof here for completeness. 

We first introduce some notation. We say that  $y$ is adjacent to  $x$ whenever  $w_n(x,y)>0$.  We say that the graph  $\G_n=(\X_n,\W_n)$  is connected to $\O\subset \X_n$ if for every  $x\in \X_n\setminus \O$  there exists $y\in \O$  and a path from  $x$ to $y$ consisting of adjacent vertices. 
\begin{theorem}[Maximum principle]\label{thm:maxprinc}
Assume the graph $\G_n$ is connected to $\O$. Let  $p\geq 2$ and suppose $u,v:\X_n \to \R$ satisfy
\begin{equation}\label{eq:supersub}
L^p_nu(x)\geq L^p_nv(x)\quad \mbox{  for all }x\in X_n.
\end{equation}
Then 
\begin{equation}\label{eq:maxprinc}
\max_{\X_n}(u - v) = \max_{\O} (u - v).
\end{equation}
In particular, if $u\leq v$ on $\O$, then $u\leq v$ on $\X_n$.
\end{theorem}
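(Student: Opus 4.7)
My plan is a strong-maximum-principle argument: set $w = u - v$, let $M = \max_{\X_n} w$, and let $S = \{x \in \X_n : w(x) = M\}$. The identity \eqref{eq:maxprinc} is equivalent to $S \cap \O \neq \emptyset$, and the ``in particular'' assertion then follows because $u \leq v$ on $\O$ gives $M = \max_{\O}(u-v) \leq 0$. I argue by contradiction: assume $S \cap \O = \emptyset$, so that $S \subseteq X_n$, and show that membership in $S$ propagates along edges to a vertex in $\O$.

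The core observation is monotonicity at a max point. Fix $x^* \in S \subseteq X_n$. For every $y \in \X_n$, the inequality $w(y) \leq w(x^*)$ rearranges to $u(y) - u(x^*) \leq v(y) - v(x^*)$. Multiplying by $w_n(x^*, y) \geq 0$ and summing yields $L_n^2 u(x^*) \leq L_n^2 v(x^*)$, and taking the max and the min over $y$ of these pointwise-ordered families yields $L_n^\infty u(x^*) \leq L_n^\infty v(x^*)$. Since both coefficients $1/d_n(x^*)$ and $\lambda(p-2)$ are nonnegative (using $p \geq 2$), these combine to $L_n^p u(x^*) \leq L_n^p v(x^*)$. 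Combined with the hypothesis $L_n^p u(x^*) \geq L_n^p v(x^*)$, equality holds, and each of the two nonpositive contributions $\frac{1}{d_n(x^*)}(L_n^2 u(x^*) - L_n^2 v(x^*))$ and $\lambda(p-2)(L_n^\infty u(x^*) - L_n^\infty v(x^*))$ must then vanish. In particular $\sum_y w_n(x^*, y)(w(y) - w(x^*)) = 0$, a sum of nonpositive terms, so each term is zero, i.e., $w(y) = M$ for every $y$ adjacent to $x^*$.

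To conclude, I invoke the connectivity hypothesis. Pick $x^* \in S \subseteq X_n$; by assumption there is a path $x^* = x_0, x_1, \dots, x_k$ of successively adjacent vertices with $x_k \in \O$. The propagation step above shows $x_{i+1} \in S$ whenever $x_i \in S \cap X_n$, so iterating along the path gives $x_k \in S \cap \O$, contradicting $S \cap \O = \emptyset$.

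The main obstacle is the nonlinearity of $L_n^p$: unlike for a linear operator, one cannot directly turn $L_n^p u \geq L_n^p v$ into a comparison for $u - v$. What rescues the argument is that each of the two pieces---the graph $2$-Laplacian and the graph $\infty$-Laplacian---is monotone at max points, and the combination coefficients are both nonnegative when $p \geq 2$. It is worth noting that equality in the $2$-Laplacian piece alone already suffices to propagate $M$ to all adjacent vertices, so the less tractable $\infty$-Laplacian term creates no additional obstruction to the propagation step.
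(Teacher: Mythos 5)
Your proof is correct and takes essentially the same approach as the paper's: identify a maximum point of $u-v$, use the monotonicity of $L^p_n$ with respect to the quantities $w_n(x,y)(u(y)-u(x))$ to show that the maximum propagates to every adjacent vertex, and conclude via connectivity to $\O$. The paper argues the propagation step a bit more tersely (strictness in any of the pointwise inequalities would force $L^p_nu(x) < L^p_nv(x)$ outright), whereas you first derive equality of $L^p_n$ at $x^*$ and then split into the two nonpositive pieces, but this is the same argument.
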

\begin{proof}
 Let $x\in X_n$ be a point at which $u-v$ attains its maximum. If $x \in \O$, then we are done, so suppose that $x \in X_n$. Since  $u -v$ attains its maximum at  $x$ we have 
 \[w_n(x,y)(u(y) -u(x))\leq w_n(x,y)(v(y) -v(x))\quad \mbox{ for all }y\in \X_n.\] 
If any of the inequalities above were strict, we would have  $L^p_nu(x)<L^p_nv(x)$ which contradicts the assumption given in \eqref{eq:supersub}. Therefore 
\[u(y) -v(y)=u(x) -v(x)\quad \mbox{ whenever }w_n(x,y)>0.\] 
That is,  $u -v$  also attains its maximum at every vertex that is adjacent to  $x$. Since the graph is connected to  $\O$, we can find a path from  $x$ to  $\O$ consisting of adjacent vertices. Therefore  $u -v$ attains its maximum somewhere on  $\O$, which completes the proof.      
\end{proof}
\begin{remark}
Notice the exact form of $L^p_n$ is not used directly in the proof of Theorem \ref{thm:maxprinc}. All that is required is that $L^p_n u(x)$ is a strictly increasing function of $w_n(x,y)(u(y)-u(x))$ for every $y\in \X_n$ with $y\neq x$. Finite difference schemes with this property are called \emph{monotone} in the PDE literature~\cite{barles1991convergence}.
\end{remark}
We now  establish existence and uniqueness of a solution to \eqref{eq:opt}.
\begin{theorem}\label{thm:exist}
Assume $ \G_n$ is connected to $\O$ and let  $p\geq 2$. For any $g:\O \to \R$, there exists a unique solution $u_n:\X_n\to \R$ of \eqref{eq:opt}. Furthermore, $u$ satisfies the \emph{a priori} estimate
\begin{equation}\label{eq:apriori}
\min_{\O}g\leq u_n\leq \max_{\O}g.
\end{equation}
\end{theorem}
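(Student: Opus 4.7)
The plan is to deduce uniqueness and the a priori bound directly from the maximum principle (Theorem~\ref{thm:maxprinc}), and to obtain existence via Perron's method on the finite graph $\G_n$.

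Uniqueness and the bound are immediate. If $u_n$ and $\tilde u_n$ are two solutions of \eqref{eq:opt}, then $L_n^p u_n = L_n^p \tilde u_n$ on $X_n$ and $u_n = \tilde u_n$ on $\O$; applying Theorem~\ref{thm:maxprinc} once in each direction forces $u_n \equiv \tilde u_n$. For the bound, constant functions satisfy $L_n^2 c = L_n^\infty c = 0$, hence $L_n^p c = 0$; comparing $u_n$ with the constants $\max_\O g$ and $\min_\O g$ via Theorem~\ref{thm:maxprinc} yields $\min_\O g \leq u_n \leq \max_\O g$ on $\X_n$.

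For existence I would use Perron's method with the family
\[
\mathcal{S} := \{v : \X_n \to \R \; : \; L_n^p v \geq 0 \text{ on } X_n \text{ and } v \leq g \text{ on } \O\},
\]
which contains $v \equiv \min_\O g$ and, by comparison with the constant supersolution $\max_\O g$, is uniformly bounded above. A short computation shows $\mathcal{S}$ is closed under pointwise maxima: if $w = \max(v_1, v_2)$ and $w(x) = v_i(x)$, then $w(y) - w(x) \geq v_i(y) - v_i(x)$ for all $y$, so $L_n^2 w(x) \geq L_n^2 v_i(x)$ and both the $\max$ and $\min$ defining $L_n^\infty w(x)$ dominate the corresponding quantities for $v_i$; since $\lambda(p-2) \geq 0$, this gives $L_n^p w(x) \geq 0$. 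Because $\X_n$ is finite, the Perron candidate $u_n^*(x) := \sup\{v(x) : v \in \mathcal{S}\}$ is realized as the pointwise limit of a sequence $V_k \in \mathcal{S}$ (take $V_k = \max_{x \in \X_n} v_k^x$ where $v_k^x \in \mathcal{S}$ satisfies $v_k^x(x) \to u_n^*(x)$), and the limit lies in $\mathcal{S}$ by continuity of $L_n^p$. The boundary condition $u_n^* = g$ on $\O$ follows because for each $x_0 \in \O$ the explicit function equal to $g(x_0)$ at $x_0$ and to $\min_\O g$ elsewhere belongs to $\mathcal{S}$ (the only nonzero differences are non-negative and feed non-negatively into $L_n^2$ and $L_n^\infty$), so $u_n^*(x_0) \geq g(x_0)$.

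It remains---and here lies the main obstacle---to show $L_n^p u_n^* \leq 0$ on $X_n$. I would argue by contradiction: if $L_n^p u_n^*(x_0) > 0$ at some $x_0 \in X_n$, let $\tilde u = u_n^*$ except $\tilde u(x_0) = u_n^*(x_0) + \epsilon$. Continuity of $L_n^p$ in the value at $x_0$ keeps $L_n^p \tilde u(x_0) > 0$ for small $\epsilon > 0$, while at any other vertex $x \in X_n$ the perturbation only increases $\tilde u(x_0) - \tilde u(x)$, and hence raises $L_n^2 \tilde u(x)$ and each of the $\max$ and $\min$ appearing in $L_n^\infty \tilde u(x)$; using $\lambda(p-2) \geq 0$ we obtain $L_n^p \tilde u(x) \geq L_n^p u_n^*(x) \geq 0$. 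Thus $\tilde u \in \mathcal{S}$ with $\tilde u(x_0) > u_n^*(x_0)$, contradicting the definition of $u_n^*$. This single-vertex perturbation step is the only place where the hypothesis $p \geq 2$ is genuinely used beyond the maximum principle itself, via the sign of $\lambda(p - 2)$ multiplying $L_n^\infty$ in \eqref{eq:lp}.
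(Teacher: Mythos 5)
Your proof is correct and follows essentially the same Perron-method strategy as the paper: the uniqueness and a priori bound come directly from Theorem~\ref{thm:maxprinc} applied against constant comparison functions, and existence is obtained by taking the supremum of subsolutions and showing the Perron function is both a sub- and supersolution via the single-vertex bump argument. Your cosmetic variations---proving closure of $\mathcal{S}$ under pointwise maxima so the supremum is realized as a pointwise limit (where the paper instead extracts a convergent subsequence and invokes monotonicity of the scheme at the touching point), and verifying $u_n^*=g$ on $\O$ by exhibiting an explicit barrier rather than by bumping $u_n^*$ up at the boundary vertex---are both standard and correct.

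One small inaccuracy in your final remark: $p\geq 2$ (i.e., $\lambda(p-2)\geq 0$, making $L^p_n$ a monotone scheme) is also what makes the closure-under-maxima step go through, not only the single-vertex perturbation; but this does not affect the validity of the argument.
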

\begin{proof}
 We use the Perron method to prove existence. Define
\[\F := \left\{v:\X_n\to \R \, : \, L^p_n v\geq 0 \mbox{ in } X_n \mbox{ and } \min_\O g \leq v \leq g \mbox{ in } \O\right\}\]
and for  $x\in \X_n$  set 
\[u_n(x):=\sup_{v \in \F} v(x).\]
Note that $v\equiv\min_\O g$ belongs to $\F$, so $\F$ is nonempty. Furthermore, $w\equiv \max_\O g$ satisfies $L^p_nw\equiv 0$, and so by Theorem \ref{thm:maxprinc}, $v \leq w$ for all $v \in \F$. It follows that  $u_n$ satisfies \eqref{eq:apriori}.   

We now show that $L^p_n u_n \geq 0$ in $X_n$. Fix $x \in X_n$ and let $v_k \in \F$ be a sequence of functions such that $v_k(x) \to u_n(x)$ as $k\to \infty$.  Since $\min_\O g \leq v_k \leq \max_\O g$ we can pass a subsequence, if necessary, so that $v_k \to v$ on $\X_n$ for some $v:\X_n\to \R$. By continuity $L^p_n v \geq 0$ on $X_n$ and $\min_\O g \leq v \leq g$ in $\O$. Therefore $v \in \F$, and so $u_n \geq v$ on $\X_n$ and $u_n(x)=v(x)$. It follows that   $L^p_n u_n(x) \geq L^p_n v(x) \geq 0$.

We now show that $L_n^p u_n \leq 0$ in $X_n$. Fix $x \in X_n$ and assume to the contrary that $L^p_n u_n(x)>  0$.
Define
\[w(y):=\begin{cases}
1,&\text{if } y=x\\
0,&\text{otherwise.}\end{cases}\]
For $\eps>0$ define $v_\eps:=u_n + \eps w$. By continuity, we can choose $\eps>0$ sufficiently small so that $L^p_nv_\eps (x)\geq 0$. By definition we have  $L^p_nv_\eps (y)\geq L^p_nu_n(y)\geq 0$ for all  $y\neq  x$.  Therefore $L^p_nv_\eps \geq 0$ in $X_n$ for sufficiently small $\eps>0$. Hence $v_\eps \in \F$ and so $u_n(x) \geq v_\eps(x) = u_n(x) + \eps$, which is a contradiction.
 
 Finally we show that  $u_n=g$ on  $\O$.   By definition, we have that  $u_n\leq g$ on  $\O$. Assume to the contrary that  $u_n(x)<g(x)$ for some  $x\in \O$. Then as above we  can set $v_\eps =u_n +\eps w$, and we find that  $v_\eps \in \F $ for  $\eps >0$ sufficiently small, which is a contradiction. Uniqueness follows directly from Theorem \ref{thm:maxprinc}. 
\end{proof}

\section{Consistency}
\label{sec:consistency}

We now prove consistency for the game theoretic $p$-Laplacian applied to smooth test functions. This is split into two steps: In Section \ref{sec:2lap} we prove a consistency result for the graph $2$-Laplacian and in Section \ref{sec:ilap} we prove a consistency result for the graph $\infty$-Laplacian.  We note there are numerous consistency results in the literature for the graph $2$-Laplacian \cite{hein2007graph,hein2006uniform,belkin2005towards,gine2006empirical,hein2005graphs,singer2006graph,ting2010analysis}, but we require a slightly different notion of consistency to use the viscosity solution framework and prove discrete regularity.

\subsection{Concentration of measure}
\label{sec:concentration}

Graph Laplacians are sums of \emph{i.i.d.}~random variables, and to prove consistency we need to control the fluctuations of the graph Laplacian about its mean. Our main tool for proving consistency is a standard concentration of measure result referred to as Bernstein's inequality \cite{boucheron2013concentration}, which we recall now for the reader's convenience. For $Y_1,\dots,Y_n$ \emph{i.i.d.}~with variance $\sigma^2 = \E((Y_i-\E[Y_i])^2)$, if $|Y_i|\leq M$ almost surely for all $i$ then Bernstein's inequality states that for any $t>0$
\begin{equation}\label{eq:bernstein}
\P\left( \left| \sum_{i=1}^n Y_i - \E(Y_i) \right|> t\right)\leq 2\exp\left( -\frac{t^2}{2n\sigma^2 + 4Mt/3} \right).
\end{equation}
Since the graph Laplacian depends only on nearby nodes in the graph, the variance $\sigma^2$ is much smaller than the absolute bound $M$, which makes the Bernstein inequality far tighter (for small $t$) than other concentration inequalities, such as the Hoeffding inequality~\cite{hoeffding1963probability}, which only depends on the size of $M$. 

The following lemma converts the Bernstein inequality into a result that is directly useful for graph Laplacians.
\begin{lemma}\label{lem:ch}
Let  $Y_1,Y_2,Y_3,\dots,Y_n$  be  a sequence of \emph{i.i.d}~random variables on  $\R^d$ with Lebesgue density $f:\R^d\to \R$, let $\psi:\R^d \to \R$ be bounded and Borel measurable with compact support in a bounded open set $\Omega\subset \R^d$, and define
\[ Y = \sum_{i=1}^n \psi(Y_i).\]
Then for any $0 \leq \lambda \leq 1$
\begin{equation}\label{eq:con}
\P\left( \left| Y - \E(Y) \right|> \|f\|_\infty\|\psi\|_\infty n|\Omega|\lambda\right)\leq 2\exp\left( -\frac{1}{4}\|f\|_\infty n|\Omega| \lambda^2 \right),
\end{equation}
where $\|\psi\|_\infty=\|\psi\|_{L^\infty(\Omega)}$, and $|\Omega|$ denotes the Lebesgue measure of $\Omega$.
\end{lemma}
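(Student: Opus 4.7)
The plan is to invoke Bernstein's inequality \eqref{eq:bernstein} directly with $Z_i := \psi(Y_i)$, which are i.i.d.\ and bounded by $M := \|\psi\|_\infty$. First I would collect the two standard ingredients: the almost sure bound $|Z_i|\le \|\psi\|_\infty$, and the variance bound
\[
\sigma^2 = \operatorname{Var}(Z_i) \le \E[Z_i^2] = \int_{\R^d} \psi(y)^2 f(y)\,dy \le \|f\|_\infty\,\|\psi\|_\infty^2\,|\Omega|,
\]
where the support assumption $\supp \psi\subset\Omega$ is used to restrict the integral to a set of Lebesgue measure $|\Omega|$.

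Then I would apply \eqref{eq:bernstein} to $Y = \sum_i Z_i$ with the specific choice $t := \|f\|_\infty\|\psi\|_\infty n|\Omega|\lambda$. After substituting the bounds on $M$ and $\sigma^2$, the exponent simplifies algebraically to
\[
\frac{t^2}{2n\sigma^2 + 4Mt/3} \;\ge\; \frac{\|f\|_\infty\, n\,|\Omega|\,\lambda^2}{2 + \tfrac{4}{3}\lambda},
\]
and the hypothesis $\lambda\le 1$ makes the denominator on the right at most $10/3 \le 4$, which collapses the fraction to $\tfrac{1}{4}\|f\|_\infty n|\Omega|\lambda^2$ and yields the claimed inequality.

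There is no real difficulty here; the lemma is a convenient repackaging of Bernstein that exploits the special structure of graph-Laplacian-type sums, namely that compactness of $\supp\psi$ keeps the variance small relative to the worst-case uniform bound $M$. The only substantive point is ensuring that the quadratic term $2n\sigma^2$ in Bernstein's denominator dominates the linear term $4Mt/3$; this is exactly what the constraint $\lambda\le 1$ buys, and it is what permits the clean constant $1/4$ in the final exponent. In later sections $\psi$ will be a shifted and rescaled copy of $\Phi$ supported in a ball of radius $2h_n$, so $|\Omega|\sim h_n^d$ and the lemma will produce deviation probabilities of the expected form $\exp(-c\,n h_n^d\lambda^2)$, which is what drives the consistency and H\"older regularity estimates to follow.
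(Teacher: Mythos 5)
Your proposal is correct and is essentially identical to the paper's proof: both apply Bernstein's inequality to $\psi(Y_i)$ with the bounds $M\le\|\psi\|_\infty$ and $\sigma^2\le\|f\|_\infty\|\psi\|_\infty^2|\Omega|$, choose $t=\|f\|_\infty\|\psi\|_\infty n|\Omega|\lambda$, and use $\lambda\le 1$ to bound the denominator $2+4\lambda/3\le 4$. Nothing to add.
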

\begin{proof}
The proof is a direct application of the Bernstein inequality \eqref{eq:bernstein}. We compute $M \leq \|\psi\|_\infty$ and
\[\sigma^2 \leq \E(\psi(Y_i)^2) = \int_\Omega \psi(y)^2 f(y) \, dy\leq |\Omega|\|\psi\|_\infty^2 \|f\|_\infty.\] 
Therefore, Bernstein's inequality yields
\begin{equation}\label{eq:bernstein_app}
\P\left( \left| Y - \E(Y) \right|> t\right)\leq 2\exp\left( -\frac{t^2}{2n|\Omega|\|\psi\|_\infty^2\|f\|_\infty + 4\|\psi\|_\infty t/3} \right),
\end{equation}
for any $t>0$. Setting $t=\|f\|_\infty\|\psi\|_\infty n|\Omega|  \lambda$ for $\lambda>0$ we have
\begin{equation}\label{eq:bernstein_app2}
\P\left( \left| Y - \E(Y) \right|> \|f\|_\infty\|\psi\|_\infty n|\Omega|\lambda\right)\leq 2\exp\left( -\frac{\|f\|_\infty n|\Omega| \lambda^2}{2 + 4\lambda/3} \right).
\end{equation}
Restricting $\lambda\leq 1$ completes the proof.
\end{proof}
\begin{remark}
If $\Omega=B(x,2h)$ in Lemma \ref{lem:ch}, then under the same assumptions as in Lemma \ref{lem:ch} we have
\begin{equation}\label{eq:con2}
\P\left( |Y-\E(Y)|\geq C\|\psi\|_\infty nh^d\lambda\right) \leq 2\exp(-cnh^d\lambda^2),
\end{equation}
for all $0 < \lambda \leq 1$, where $C,c>0$ are constants depending only on $\|f\|_\infty$ and $d$. 
\label{rem:ch}
\end{remark}

\subsection{Consistency of  the $2$-Laplacian}
\label{sec:2lap}

We now prove consistency for the graph $2$-Laplacian.
\begin{theorem}\label{thm:consistency2}
Let $\delta>0$ and let   $\Omega _{\delta ,n}$  denote the event that 
\begin{equation}\label{eq:consistencyd}
\left|\frac{1}{nh_n^d}d_n(x) -C_1f(x) \right|\leq C\delta h_n,
\end{equation}
and
\begin{equation}\label{eq:consistency2}
\left|\frac{1}{nh_n^{d +2}}L^2_n\varphi (x)-C_2f^{ -1}\mbox{div}\left( f^2\nabla  \varphi  \right)  \right|\leq C \|\varphi \|_{C^3(B(x,2h_n))}\delta,
\end{equation}
hold for all  $x\in X_n$ with $\mbox{dist}(x,\O)> 2h_n$ and  $\varphi \in C^3(\R^d)$.  Then there exists  $C,c>0$  such that for  $h_n\leq \delta \leq h_n^{-1}$ we have     
\begin{equation}\label{eq:probability}
\P[\Omega  _{\delta ,n}]\geq 1 -C\exp\left(  -c\delta ^2nh_n^{d +2} + \log(n)\right).
\end{equation}
\end{theorem}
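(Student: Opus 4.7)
The plan is to decompose the error into a deterministic Taylor-expansion bias plus a random fluctuation, apply the Bernstein-type concentration bound from Lemma~\ref{lem:ch} to the fluctuation, and conclude by a union bound over $x \in X_n$. Fix $x \in X_n$ with $\dist(x,\O) > 2h_n$. Because $w_n(x,y) = 0$ whenever $|x-y| > 2h_n$, the finitely many labels in $\O$ contribute nothing to $d_n(x)$ or $L^2_n\varphi(x)$, so after conditioning on $x = X_i$ each of these is a sum of $n-1$ i.i.d.\ random variables supported in $B(x,2h_n)$, to which Lemma~\ref{lem:ch} (or Remark~\ref{rem:ch}) applies.

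The first step is to compute the means. Substituting $y = x + h_n z$ and Taylor expanding,
\[\frac{1}{nh_n^d}\E[d_n(x)] = \int_{B(0,2)}\Phi(|z|)f(x+h_n z)\,dz = C_1 f(x) + O(h_n^2),\]
where the $O(h_n)$ term vanishes because $\int \Phi(|z|)z\,dz = 0$ by the radial symmetry of $\Phi(|\cdot|)$. Similarly,
\[\frac{1}{nh_n^{d+2}}\E[L^2_n\varphi(x)] = \frac{1}{h_n^2}\int_{B(0,2)}\Phi(|z|)(\varphi(x+h_n z)-\varphi(x))f(x+h_n z)\,dz,\]
and a second-order Taylor expansion of $\varphi$ together with a first-order expansion of $f$, using the identity $\int \Phi(|z|)z_iz_j\,dz = 2C_2\delta_{ij}$ and the vanishing of all odd moments, yields $C_2 f(x)\Delta\varphi(x) + 2C_2\nabla f(x)\cdot\nabla\varphi(x) + O(\|\varphi\|_{C^3}h_n) = C_2 f^{-1}(x)\,\div(f^2\nabla\varphi)(x) + O(\|\varphi\|_{C^3}h_n)$. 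Since $\delta \geq h_n$, both deterministic errors are absorbed into the targets $C\delta h_n$ and $C\|\varphi\|_{C^3}\delta$.

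For the fluctuation, Lemma~\ref{lem:ch} applied to $\psi(y) = \Phi(|x-y|/h_n)$ with $\lambda = \delta h_n$ (allowed because $\delta \leq h_n^{-1}$) yields $|d_n(x) - \E d_n(x)| \leq Cnh_n^{d+1}\delta$ with probability at least $1 - 2\exp(-cnh_n^{d+2}\delta^2)$. To obtain uniformity in $\varphi$, I Taylor expand $\varphi$ about $x$ and write
\[L^2_n\varphi(x) = \sum_{1\leq|\alpha|\leq 2}\frac{D^\alpha\varphi(x)}{\alpha!}T_\alpha(x) + E_\varphi(x),\]
where $T_\alpha(x) := \sum_{y\in\X_n}w_n(x,y)(y-x)^\alpha$ and $|E_\varphi(x)| \leq C\|\varphi\|_{C^3(B(x,2h_n))}h_n^3 d_n(x)$. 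Applying Lemma~\ref{lem:ch} to each of the finitely many moments $T_\alpha$ (using the sharper variance bound $\sigma^2 \leq Ch_n^{d+2|\alpha|}$, which follows from $|(y-x)^\alpha|\leq Ch_n^{|\alpha|}$ on the support of $\psi$) controls each moment to within $Cnh_n^{d+|\alpha|+1}\delta$ with the same exponential probability, so after dividing by $nh_n^{d+2}$ every summand and the remainder contribute at most $C\|\varphi\|_{C^3}\delta$.

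A union bound over $x \in X_n$ (at most $n$ events) together with the $O(1)$ multi-indices $|\alpha|\leq 2$ produces the $\log(n)$ term in the exponent and gives the announced probability estimate. The main subtlety I anticipate is verifying that the Bernstein exponent is genuinely $nh_n^{d+2}\delta^2$ in both the variance-dominated and the max-dominated regimes of \eqref{eq:bernstein}; the hypothesis $\delta \leq h_n^{-1}$ is exactly what forces the linear-in-$t$ term in the denominator of Bernstein's inequality to be at least as favorable as the quadratic term, ensuring no loss of the exponent.
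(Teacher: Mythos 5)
Your argument is correct and follows the same route as the paper's proof: condition on the location of $x$, observe that the labels in $\O$ do not contribute when $\dist(x,\O) > 2h_n$, Taylor expand $\varphi$ so that $L_n^2\varphi(x)$ becomes a linear combination of the zeroth, first, and second local moments $T_\alpha(x) = \sum_y w_n(x,y)(y-x)^\alpha$, apply Lemma~\ref{lem:ch}/Remark~\ref{rem:ch} to each moment with $\lambda = \delta h_n$, and union bound over the (at most $n$) vertices. The only cosmetic difference is your use of multi-index notation where the paper writes out the gradient and Hessian contributions separately, and your explicit remark that the two constraints $h_n \le \delta$ (bias absorption) and $\delta \le h_n^{-1}$ ($\lambda \le 1$ in the Bernstein step) play distinct roles, which the paper leaves implicit.
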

\begin{remark}
We mention that $d_n(x)$ is a kernel density estimator for the density $f(x)$, and the error estimate \eqref{eq:consistencyd} is standard in the density estimation literature \cite{silverman2018density}.
\end{remark}
\begin{proof}
By conditioning on the location of $x\in X_n$, we can assume without loss of generality that $x\in \T^d$ is a fixed (non-random) point. Write $\beta = \|\phi\|_{C^3(B(x,2h_n))}$. Let $\varphi \in C^3(\R^d)$ and set  $p=D\varphi (x)$ and  $A=D^2\varphi (x)$. Note that 
\begin{align}\label{eq:ts}
 L_n^2\varphi (x)=\sum_{i=1}^dp_i\sum_{y\in X_n}w_n(x,y)(y_i -x_i) &+\frac{1}{2}\sum_{i,j=1}^da_{ij}\sum_{y\in X_n}w_n(x,y)(y_i -x_i)(y_j -x_j) \nonumber\\
&\hspace{1.5in}+O\left( h^3\beta d_n(x) \right).
\end{align}
Let $\delta>0$. By Lemma \ref{lem:ch} and Remark \ref{rem:ch}, each of
\[\left|d_n(x) -n\int_{B(x,2h_n)}\Phi \left( \frac{|x -y|}{h_n} \right)f(y)\, dy  \right|\geq C\delta nh_n^{d +1},\] 
\[\left|\sum_{y\in X_n}w_n(x,y)(y_i -x_i) -n\int_{B(x,2h_n)}\Phi \left( \frac{|x -y|}{h_n} \right)(y_i -x_i)f(y)\, dy  \right|\geq C\delta nh_n^{d +2},\] 
and
\begin{align*}
\left|\sum_{y\in X_n}w_n(x,y)(y_i -x_i)(y_j-x_j) -n\int_{B(x,2h_n)}\Phi \left( \frac{|x -y|}{h_n} \right)(y_i -x_i)(y_j-x_j)f(y)\, dy  \right|&\\
&\hspace{-2cm}\geq C\delta nh_n^{d +3},
\end{align*}
occur with probability at most $2\exp\left(  -c\delta ^2nh_n^{d +2} \right)$ provided $0 < \delta h_n \leq 1$.
Thus, if $h_n \leq \delta\leq h_n^{-1}$ we have 
\begin{equation}\label{eq:app}
\frac{1}{nh_n^{d+2}}L_n^2\varphi (x)=\int_{B(0,2)}\Phi \left( |z| \right)\left( \frac{1}{h_n}p\cdot z +\frac{1}{2}z\cdot A z \right)f(x+zh_n)\, dz  +O\left( \delta \beta \right)
\end{equation}
holds for all  $\varphi \in C^3(\R^d)$ with probability at least $1 -C\exp\left(  -c\delta ^2nh_n^{d +2} \right).$
Notice that 
\begin{align*}
 \frac{1}{h_n}\int_{B(0,2)}\Phi (|z|)(p\cdot z)f(x +zh_n)\, dy&=\nabla  f(x)\cdot \int_{B(0,2)}\Phi (|z|)(p\cdot z)z\, dz  +O(\beta h_n)\\
&=\nabla  f(x)\cdot \sum_{i=1}^dp_i\int_{B(0,2)}\Phi (|z|)z_iz\, dz+O(\beta h_n)\\
&=2C_2\nabla  f(x)\cdot p+O(\beta h_n),
\end{align*}
and
\begin{align*}
\frac{1}{2}\int_{B(0,2)}\Phi \left( |z| \right)(z\cdot A z)f(x+zh_n)\, dz &=\frac{1}{2}f(x)\sum_{i,j=1}^da_{ij}\int_{B(0,2)}\Phi(|z|)z_iz_j\,dz  +O(h_n\beta )\\
&=C_2f(x)\mbox{Trace}(A) +O(h_n\beta ).
\end{align*}
Combining this with \eqref{eq:app} we have that
\begin{equation}\label{eq:app2}
\frac{1}{nh_n^{d +2}}L^2_n\varphi (x)=C_2f^{ -1}\mbox{div}\left( f^2\nabla  \varphi  \right)  +O(\delta \beta ),
\end{equation}
holds with probability at least $1 -C\exp\left(  -c\delta ^2nh_n^{d +2}\right)$. The proof is completed by union bounding over all $x\in X_n$.
\end{proof}

\subsection{Consistency of  the $\infty$-Laplacian}
\label{sec:ilap}

We now prove consistency of the $\infty$-Laplace operator. We define
\begin{equation}\label{eq:T}
H_nu(x) := \max_{y \in \T^d} w_n(x,y)(u(y) -u(x)) + \min_{y \in \T^d} w_n(x,y) (u(y) -u(x)),
\end{equation}
Note that $H_n$ is a non-local operator on the Torus, and does not depend on the realization of the random graph $\G_n$. Recalling the definition of the weights \eqref{eq:weights}, we see that $H_n$ depends only on the choice of length scale $h_n$ in the problem.

We first recall a result from \cite{calderLip2017}.
\begin{lemma}[Lemma 1 in \cite{calderLip2017}]\label{lem:approximation1}
Let  $\varphi \in C^2(\R^d)$.  Then 
\begin{equation}\label{eq:approximation1}
|L^{\infty}_n\varphi (x) -H_n\varphi (x)|\leq C\left( \|\phi\|_{C^1(B(x,2h_n))}+h_n \|\phi\|_{C^2(B(x,2h_n))}\right) r^2_nh_n^{ -1},
\end{equation}
where 
\begin{equation}\label{eq:r}
r_n=\sup_{y\in \T^d}\dist(y,\X_n).
\end{equation}
\end{lemma}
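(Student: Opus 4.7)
The plan is to express both operators through the single function $F : \T^d \to \R$ defined by
\[F(y) := w_n(x,y)(\varphi(y) - \varphi(x)) = \Phi(|x-y|/h_n)(\varphi(y) - \varphi(x)),\]
so that $L^\infty_n \varphi(x) = \max_{y \in \X_n} F(y) + \min_{y \in \X_n} F(y)$ and $H_n \varphi(x) = \max_{y \in \T^d} F(y) + \min_{y \in \T^d} F(y)$. By the triangle inequality it suffices to bound the error in the max term (the min case being handled identically by applying the argument to $-F$) by a constant multiple of $(\|\varphi\|_{C^1} + h_n \|\varphi\|_{C^2}) r_n^2 h_n^{-1}$.

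Trivially $\max_{\X_n} F \leq \max_{\T^d} F$ since $\X_n \subset \T^d$. For the opposite direction, let $y^* \in \T^d$ achieve the continuum maximum, and use the definition of $r_n$ to pick $\tilde{y} \in \X_n$ with $|\tilde{y} - y^*| \leq r_n$. If $\max_{\T^d} F > 0$, then $y^*$ must lie strictly in the interior of $B(x,2h_n)$ (since $F$ vanishes on $\T^d \setminus B(x,2h_n)$ by the support condition on $\Phi$) and $y^* \neq x$ (since $F(x)=0$). On this open punctured ball $F$ is $C^2$, the interior criticality gives $\nabla F(y^*) = 0$, and Taylor's theorem yields
\[F(\tilde{y}) \geq F(y^*) - \tfrac{1}{2} r_n^2 \sup_{\xi \in [y^*,\tilde{y}]} \|D^2 F(\xi)\|.\]

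The remaining step is a routine estimate of $\|D^2 F\|$ via the product rule. Using the chain-rule scaling $|D^j \Phi(|x-\cdot|/h_n)| = O(h_n^{-j})$ for $j \leq 2$, combined with the pointwise bound $|\varphi(y) - \varphi(x)| \leq 2h_n \|\varphi\|_{C^1(B(x,2h_n))}$ for $y \in B(x,2h_n)$, I expect
\[\|D^2 F\|_{L^\infty(B(x,2h_n))} \leq C\bigl( h_n^{-1} \|\varphi\|_{C^1(B(x,2h_n))} + \|\varphi\|_{C^2(B(x,2h_n))} \bigr),\]
and multiplying by $r_n^2$ delivers the error in the stated form.

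The main obstacle to be careful with is the degenerate case $\max_{\T^d} F \leq 0$, in which $y^*$ may coincide with $x$ where $w_n(x,\cdot)$ fails to be $C^2$ unless $\Phi'(0)=0$. In that regime $F \leq 0$ on $B(x,2h_n)$ forces $\varphi$ to attain a local maximum at $x$, so $\nabla \varphi(x) = 0$, and consequently $|\varphi(\tilde{x}) - \varphi(x)| \leq C r_n^2 \|\varphi\|_{C^2(B(x,2h_n))}$ for any $\tilde{x} \in \X_n$ within $r_n$ of $x$; this yields $|F(\tilde{x})| \leq C r_n^2 \|\varphi\|_{C^2}$, which is still of the required form (assuming $h_n \leq 1$). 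A secondary concern is that the Taylor segment from $y^*$ to $\tilde{y}$ may briefly exit $B(x,2h_n)$; I would handle this by assuming without loss of generality that $r_n \leq h_n$, since the conclusion is otherwise immediate from the crude Lipschitz bound $|F| \leq 2h_n \|\varphi\|_{C^1}$. Beyond these caveats, the argument reduces to standard multivariable calculus.
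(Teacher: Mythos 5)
The paper does not reprove this lemma---it imports it verbatim from \cite{calderLip2017}---and your argument is precisely the standard one used there: compare the discrete and continuum extremizers, use interior criticality of the continuum maximizer $y^*$ to upgrade the naive first-order error $O(r_n\|\varphi\|_{C^1})$ to a second-order one, and bound $\|D^2F\|$ by $C(h_n^{-1}\|\varphi\|_{C^1}+\|\varphi\|_{C^2})$ on the support of the weight. Your handling of the edge cases is also sound (in the degenerate case $\max_{\T^d}F\le 0$ one even has $\max_{\T^d}F=0=F(x)$ so there is nothing to prove when $x\in\X_n$, and the potential non-smoothness of $F$ at $y=x$ is harmless because the singular term in $\nabla F$ carries the factor $\varphi(y)-\varphi(x)=O(|y-x|)$, so $\nabla F$ extends continuously across $x$ and the Taylor inequality along the segment survives), so the proof is complete and matches the cited one.
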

Due to Lemma \ref{lem:approximation1}, we only require a consistency result for $H_n$.
\begin{theorem}\label{thm:liconsistency}
For any   $x_0\in \T^d\setminus \O$ and  $\varphi \in C^3(\R^d)$  with  $\nabla  \varphi (x_0)\neq  0$  
\begin{equation}\label{eq:liconsistency}
 \lim_{\substack{n\to \infty \cr x\to x_0}}\frac{1}{h_n^2}H_n\varphi (x)=s_0^2\Phi (s_0)\Delta _\infty\varphi(x_0),
\end{equation}
where
\[\Delta_\infty \varphi = \frac{1}{|\nabla \varphi|^2}\sum_{i,j=1}^d \varphi_{x_ix_j}\varphi_{x_i}\varphi_{x_j}.\]
\end{theorem}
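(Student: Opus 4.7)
\textbf{Proof proposal for Theorem \ref{thm:liconsistency}.}
The plan is a direct reduction to a concrete finite-dimensional optimization. Set $p=\nabla\varphi(x)$, $\hat p = p/|p|$, $A = D^2\varphi(x)$, and change variables $z = (y-x)/h_n$. Because $\Phi$ vanishes outside $[0,2]$, the $\max$ and $\min$ defining $H_n\varphi(x)$ are taken over $|z|\le 2$. Taylor expanding $\varphi$ to third order gives
\[
\Phi(|z|)(\varphi(x+h_n z) - \varphi(x)) \;=\; h_n \Phi(|z|)\, z\!\cdot\! p \;+\; \tfrac{h_n^2}{2}\,\Phi(|z|)\,z^T A z \;+\; O(h_n^3\|\varphi\|_{C^3}).
\]
Write $G(z) := \Phi(|z|)\, z\!\cdot\! p$ and $F(z) := \Phi(|z|)(\varphi(x+h_nz)-\varphi(x))$. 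Writing $z=r\xi$ with $\xi\in S^{d-1}$ and letting $\mu$ be the angle between $\xi$ and $\hat p$, one has $G(z) = |p|\, r\Phi(r)\cos\mu$, so assumption \eqref{eq:uniquemax} together with $\cos\mu \le 1 - c\mu^2$ near $\mu=0$ shows that $G$ has a unique strict maximum on the closed ball $\{|z|\le 2\}$ at $z = s_0\hat p$ with value $s_0\Phi(s_0)|p|$ and a quadratic gap:
\[
G(z) \;\le\; s_0\Phi(s_0)|p| \;-\; c|p|\bigl((r-s_0)^2 + \mu^2\bigr)
\]
in a neighborhood of $s_0\hat p$, and bounded away from $s_0\Phi(s_0)|p|$ outside that neighborhood.

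The next step is localization of the optimizers. Let $z^+$ maximize $F$ on $\{|z|\le 2\}$. Comparing $F(z^+)\ge F(s_0\hat p) = h_n s_0\Phi(s_0)|p| + O(h_n^2)$ with the upper bound $F(z)\le h_n G(z) + O(h_n^2)$ and invoking the quadratic gap above yields $(|z^+|-s_0)^2 + \mu(z^+)^2 = O(h_n)$, i.e.\ $z^+ = s_0\hat p + O(h_n^{1/2})$. An identical argument with $-p$ in place of $p$ gives $z^- = -s_0\hat p + O(h_n^{1/2})$ for the minimizer. Substituting this localization into the Taylor expansion of $F$:
\begin{align*}
F(z^+) &= h_n\bigl[G(s_0\hat p) + O(h_n)\bigr] + \tfrac{h_n^2}{2}\bigl[\Phi(s_0)(s_0\hat p)^T A (s_0\hat p) + O(h_n^{1/2})\bigr] + O(h_n^3) \\
       &= h_n s_0\Phi(s_0)|p| + \tfrac{h_n^2}{2}\, s_0^2\Phi(s_0)\,\Delta_\infty\varphi(x) + o(h_n^2),
\end{align*}
where we used the identity $\hat p^T A \hat p = \Delta_\infty\varphi(x)$; the $h_n \nabla G(s_0\hat p)\cdot(z^+-s_0\hat p)$ contribution vanishes because $\nabla G(s_0\hat p) = 0$, which is precisely why the order-$h_n^{1/2}$ displacement is harmless. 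The same computation for $z^-$ gives $F(z^-) = -h_n s_0\Phi(s_0)|p| + \tfrac{h_n^2}{2}s_0^2\Phi(s_0)\Delta_\infty\varphi(x) + o(h_n^2)$ since $(-\hat p)^T A (-\hat p) = \hat p^T A \hat p$.

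Adding the two, the first-order terms cancel exactly and we obtain $H_n\varphi(x) = h_n^2\, s_0^2\Phi(s_0)\,\Delta_\infty\varphi(x) + o(h_n^2)$. Dividing by $h_n^2$ and letting $x\to x_0$, $n\to\infty$, the conclusion follows from the continuity of $\Delta_\infty\varphi$ at $x_0$, which holds because $\nabla\varphi(x_0)\ne 0$ and $\varphi\in C^3$. The main obstacle is step two, the localization argument: the $O(h_n)$ perturbation from the Hessian term could in principle move the optimizer by $O(h_n^{1/2})$, and it is crucial that hypothesis \eqref{eq:uniquemax} provides a \emph{quantitative} quadratic gap—mere uniqueness of the maximum of $s\Phi(s)$ would not suffice to carry out the cancellation at the required precision.
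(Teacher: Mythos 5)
Your strategy---localize the maximizer $z^+$ and minimizer $z^-$ of $F$ near $\pm s_0\hat p$ and Taylor expand each separately---differs from the paper's, which first reduces the $d$-dimensional max/min to one-dimensional optimizations $B,B'$ over $s\in[0,2]$ and then pairs the max with the min so that the $O(h_n^{-1})$ terms cancel \emph{exactly}. As written, your version has a genuine gap at the decisive step. From the localization $|z^+-s_0\hat p|=O(h_n^{1/2})$ and $\nabla G(s_0\hat p)=0$ you can only conclude
\[
G(z^+) \;=\; G(s_0\hat p) + O\bigl(|z^+-s_0\hat p|^2\bigr) \;=\; G(s_0\hat p)+O(h_n),
\]
so the first-order contribution is $h_nG(z^+)=h_ns_0\Phi(s_0)|p|+O(h_n^2)$. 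That error is $O(h_n^2)$, not the $o(h_n^2)$ you write in the second line of your display: it is exactly the order of the quantity $\tfrac{h_n^2}{2}s_0^2\Phi(s_0)\Delta_\infty\varphi$ you are trying to extract, and there is no reason for the two such errors coming from $z^+$ and $z^-$ to cancel (one is nonpositive, the other nonnegative, but their magnitudes are unrelated, since the displacements of $z^+$ from $s_0\hat p$ and of $-z^-$ from $s_0\hat p$ need not coincide). The vanishing of $\nabla G(s_0\hat p)$ kills only the term \emph{linear} in the displacement; the quadratic term survives, and at displacement $O(h_n^{1/2})$ it is not harmless.

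There are two ways to repair this. (i) Bootstrap the localization: in the comparison $F(z^+)\ge F(s_0\hat p)$, keep the Hessian term $\tfrac{h_n^2}{2}\Phi(|z|)z^TAz$ evaluated at both points instead of absorbing it into a uniform $O(h_n^2)$; this gives $c|z^+-s_0\hat p|^2\le Ch_n|z^+-s_0\hat p|+Ch_n^2$, hence $|z^+-s_0\hat p|=O(h_n)$, whence $h_n\bigl(G(z^+)-G(s_0\hat p)\bigr)=O(h_n^3)$ and your computation closes. (ii) Avoid rates altogether, which is what the paper's argument accomplishes after its one-dimensional reduction: bound $\min_zF(z)\le F(-z^+)$ and use the oddness $G(-z)=-G(z)$, so that $\max F+\min F\le F(z^+)+F(-z^+)=h_n^2\Phi(|z^+|)(z^+)^TAz^++O(h_n^3)$ with the first-order terms cancelling identically (and symmetrically for the lower bound via $-z^-$); then only the soft convergence $z^+\to s_0\hat p$, with no rate, is needed. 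This also explains why the paper can get by at this stage with mere uniqueness of the maximizer of $s\Phi(s)$, whereas your route genuinely needs the quantitative gap \eqref{eq:uniquemax}---but needs it to yield $O(h_n)$ localization, not $O(h_n^{1/2})$. Either fix is short, but as written the proof does not establish the stated limit.
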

\begin{proof}
As in \cite[Lemma 1]{calderLip2017} we define 
\[B(x) = \max_{0 \leq s \leq 2} \left\{ \frac{1}{h_n}s\Phi(s)|\nabla  \phi(x)| - \frac{1}{2}s^2\Phi(s)\Delta_\infty \phi(x)\right\},\]
and
\begin{equation*}
B'(x) = \max_{0 \leq s \leq 2} \left\{ \frac{1}{h_n}s\Phi(s)|\nabla  \phi(x)| + \frac{1}{2}s^2\Phi(s)\Delta_\infty \phi(x) \right\},
\end{equation*}
and we have
\begin{equation*}
\frac{1}{h_n^2}H_n\varphi (x)= B'(x) -B(x)+ O(\|\phi\|_{C^3(B(x,2h_n))}h_n).
\end{equation*}
Let  $x_n\to x_0$ and for each  $n$  let  $s_n,s'_n>0$ be such that  
\[B(x_n) =\frac{1}{h_n}s_n\Phi(s_n)|\nabla  \phi(x_n)| - \frac{1}{2}s_n^2\Phi(s_n)\Delta_\infty \phi(x_n),\]
and
\[B'(x_n) =\frac{1}{h_n}s'_n\Phi(s'_n)|\nabla  \phi(x_n)| + \frac{1}{2}(s'_n)^2\Phi(s'_n)\Delta_\infty \phi(x_n).\]
Then we have that 
\[\frac{1}{h_n^2}H_n\varphi (x_n)\leq (s_n')^2\Phi (s'_n)\Delta _\infty\varphi (x_n) + O(\|\phi\|_{C^3(B(x_n,2h_n))}h_n),\]
and
\[\frac{1}{h_n^2}H_n\varphi (x_n)\geq s_n^2\Phi (s_n)\Delta _\infty\varphi (x_n) + O(\|\phi\|_{C^3(B(x_n,2h_n))}h_n).\]
 Since  $s\mapsto s\Phi (s)$ has a unique maximum  $s_0\in (0,2]$,  we find that
 \[s_n,s'_n\to s_0\mbox{  as }n\to \infty.\]  
 It follows that 
 \[\lim_{n\to \infty}\frac{1}{h_n^2}H_n\varphi (x_n)=s_0^2\Phi (s_0)\Delta _\infty\varphi(x_0).\qedhere \]
\end{proof}

\section{Regularity}
\label{sec:holder}

Here, we prove a discrete H\"older regularity result (Theorem \ref{thm:holder}) for the solutions $u_n$ of \eqref{eq:opt}.  The proof is based on a well-known trick for establishing H\"older regularity for solutions of the unweighted $p$-Laplace equation
\begin{equation}\label{eq:pppp}
\mbox{div}(|\nabla u|^{p-2}\nabla u) = 0
\end{equation}
via the maximum principle. The trick is to notice that $v(x) = |x-y|^{\frac{p-d}{p-1}}$ is a solution of \eqref{eq:pppp} away from $x=y$. When $p>d$, $v$ is continuous and if we choose a large enough constant $C$ so that
\begin{equation}\label{eq:hpp}
u(y) - Cv(x) \leq u(x) \leq u(y) + Cv(x)
\end{equation}
for all boundary points $x$, then the maximum principle can be invoked to establish that \eqref{eq:hpp} holds for all $x$, i.e.,
\[|u(y)-u(x)| \leq C|x-y|^{\frac{p-d}{p-1}}.\]
The function $v$ is called a \emph{barrier} in the PDE literature.

Adapting this to the graph setting is somewhat technical, since $v(x)$ is not an exact solution of the game theoretic $p$-Laplacian, due to random fluctuations and errors in the consistency results. The argument can be rescued by using the barrier $w(x)=|x-y|^\alpha$ for any $\alpha<(p-d)/(p-1)$. This function is a strict supersolution of \eqref{eq:pppp}, which allows some room to account for the difference between the graph and continuum $p$-Laplacians. It is possible to show (see Lemma \ref{lem:super}) that $w$ is a supersolution of the game theoretic graph $p$-Laplacian (i.e., $L^p_nw(x)\leq 0$) for $|x-y|\geq ch_n$ for some $c>0$ with high probability. The errors in the consistency results blow up as we approach the singularity at $x=y$, so $w$ is not a global supersolution. In Lemma \ref{lem:finesuper}, we show how to modify $w$ near $x=y$ to ensure the supersolution property holds globally. The modification relies extensively on the presence of the graph $\infty$-Laplacian term.

For notational simplicity, we set
\[\Delta_2 \phi = f^{-1}\div(f^2\D \phi).\]

We first require some elementary propositions.
\begin{proposition}\label{prop:supersolution}
For  $p>d$ and $\alpha\in (0,1)$  the function  $v(x)=|x|^\alpha$ satisfies 
\begin{equation}\label{eq:supersolution}
\Delta _2 v(x)\leq f(x)\alpha|x|^{\alpha -2}\left( d + \alpha-2 +C|x| \right).
\end{equation}
\end{proposition}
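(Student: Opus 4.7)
The statement is a direct computation, and the plan is to just unpack the operator $\Delta_2$, differentiate $v(x)=|x|^\alpha$, and bound the extra drift term coming from $\nabla f$ using positivity and smoothness of $f$ on $\T^d$.

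First I would rewrite
\[
\Delta_2 v = f^{-1}\div(f^2\nabla v) = f\,\Delta v + 2\,\nabla f\cdot\nabla v,
\]
since $\div(f^2\nabla v)=f^2\Delta v+2f\,\nabla f\cdot\nabla v$. Next I would compute the pointwise derivatives of $v(x)=|x|^\alpha$ away from $0$:
\[
\nabla v(x)=\alpha|x|^{\alpha-2}x,\qquad \Delta v(x)=\alpha(\alpha+d-2)|x|^{\alpha-2}.
\]
Substituting into the expression above yields
\[
\Delta_2 v(x) = \alpha|x|^{\alpha-2}\bigl( f(x)(d+\alpha-2)+2\,\nabla f(x)\cdot x\bigr).
\]

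The remaining step is to absorb the drift term into the stated bound. Since $f\in C^2(\T^d)$ with $f>0$ on the compact torus, $f$ admits a positive lower bound $f_{\min}>0$ and $\|\nabla f\|_\infty<\infty$. Therefore
\[
2\,\nabla f(x)\cdot x \leq 2\|\nabla f\|_\infty |x| \leq \frac{2\|\nabla f\|_\infty}{f_{\min}}\,f(x)\,|x| = C\,f(x)\,|x|,
\]
with $C:=2\|\nabla f\|_\infty/f_{\min}$ depending only on $f$. Plugging this back into the expression for $\Delta_2 v$ gives exactly the claimed inequality
\[
\Delta_2 v(x)\leq f(x)\,\alpha|x|^{\alpha-2}\bigl(d+\alpha-2+C|x|\bigr).
\]

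There is no real obstacle here; the whole argument is a two-line chain-rule calculation plus a crude pointwise bound on the $\nabla f\cdot\nabla v$ term. The only mild subtlety is that $v$ is singular at the origin, so the computation and the inequality are understood pointwise for $x\neq 0$, which is exactly the regime in which the proposition will be applied as a barrier in the subsequent lemmas.
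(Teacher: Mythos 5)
Your proof is correct and follows exactly the same route as the paper: expand $\Delta_2 v = f\Delta v + 2\nabla f\cdot\nabla v$, compute $\nabla v$ and $\Delta v$ for $v=|x|^\alpha$, and substitute. The only difference is that you spell out the final step of absorbing the drift term $2\nabla f\cdot x$ into $Cf(x)|x|$ using $f_{\min}>0$ and $\|\nabla f\|_\infty<\infty$, which the paper leaves implicit.
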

\begin{proof}
Notice that 
\[\Delta_2 v = f\Delta v + 2\nabla f\cdot \nabla v.\]
The proof is completed by computing 
\[\Delta v=\alpha|x|^{\alpha -2}\left( d + \alpha-2 \right)\mbox{ and }\nabla  v=\alpha|x|^{\alpha-2}x.\qedhere\]
\end{proof}
\begin{proposition}\label{prop:supersolution2}
Let   $p>d$ and $\alpha\in (0,1)$. For every  $\delta >0$ there exists  $C\geq 2$ such that     the function  $v(x)=|x|^\alpha$ satisfies 
\begin{equation}\label{eq:supersolution2}
\frac{1}{h_n^2}H_nv(x)\leq (1 -\delta ) s^2_0\Phi (s_0)\alpha(\alpha-1)|x|^{\alpha-2}
\end{equation}
for all  $|x|\geq Ch_n$.  
\end{proposition}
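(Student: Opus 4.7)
The plan is to parametrize $y = x + h_n s\omega$ with $s\in[0,2]$ and $\omega\in S^{d-1}$ (since $\Phi\equiv 0$ outside $[0,2]$), and Taylor-expand $v(x+h_n s\omega)$ around $x$. Writing $r=|x|$, $\hat x = x/|x|$, and $\eta = h_n/r$, the hypothesis $|x|\geq Ch_n$ with $C\geq 4$ ensures $|x+h_n s\omega|\geq r/2$, so $v=|\cdot|^\alpha$ is smooth along each segment with $\|D^3 v\|\leq C_\alpha r^{\alpha-3}$. Using $\nabla v(x)=\alpha r^{\alpha-2}x$ and $\omega^T D^2 v(x)\omega = \alpha r^{\alpha-2}[1+(\alpha-2)(\omega\cdot\hat x)^2]$, the expansion becomes
\[
F(s,\omega) := \Phi(s)\bigl(v(x+h_n s\omega)-v(x)\bigr) = \Phi(s)\Bigl[h_n s\alpha r^{\alpha-1}(\omega\cdot\hat x) + \tfrac{(h_n s)^2}{2}\alpha r^{\alpha-2}\bigl(1+(\alpha-2)(\omega\cdot\hat x)^2\bigr)\Bigr]+R,
\]
with $|R|\leq C_\alpha h_n^3 r^{\alpha-3}$ uniformly in $(s,\omega)$.

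First I would do the $\omega$-optimization at fixed $s$. Writing $c=\omega\cdot\hat x$, the quantity in brackets is concave quadratic in $c$ with interior critical point $c^* = 1/((2-\alpha)\eta s) \geq 1$ once $\eta \leq 1/4$, so on $[-1,1]$ the bracketed term is increasing in $c$. Hence $\max_\omega F(s,\omega)=F(s,\hat x)$ and $\min_\omega F(s,\omega)=F(s,-\hat x)$. For the min-side I would then simply evaluate at the test point $(s_0,-\hat x)$ to obtain
\[
\min_{s,\omega} F(s,\omega) \;\leq\; -h_n s_0\Phi(s_0)\alpha r^{\alpha-1} + \tfrac{h_n^2 s_0^2}{2}\Phi(s_0)\alpha(\alpha-1)r^{\alpha-2} + Ch_n^3 r^{\alpha-3}.
\]

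The main step is a matching upper bound on the max with the opposite-sign first-order term, so that when added to the min the $O(h_n)$ contributions cancel. With $\omega=\hat x$ this reduces to bounding $\max_s\Psi(s)$ for
\[
\Psi(s) = h_n\alpha r^{\alpha-1}\bigl(s\Phi(s)\bigr) + \tfrac{h_n^2}{2}\alpha(\alpha-1)r^{\alpha-2}\bigl(s^2\Phi(s)\bigr).
\]
I would combine the uniqueness hypothesis \eqref{eq:uniquemax}, $s\Phi(s)\leq s_0\Phi(s_0)-\theta(s-s_0)^2$, with the Lipschitz estimate $|s^2\Phi(s)-s_0^2\Phi(s_0)|\leq M|s-s_0|$ (from $\Phi\in C^2$) to get
\[
\Psi(s)-\Psi(s_0) \;\leq\; -h_n\alpha r^{\alpha-1}\theta(s-s_0)^2 + \tfrac{Mh_n^2}{2}|\alpha(\alpha-1)|r^{\alpha-2}|s-s_0|.
\]
The RHS is a downward quadratic in $|s-s_0|$ whose maximum is of order $h_n^3 r^{\alpha-3}$, yielding $\max_s\Psi(s)\leq \Psi(s_0) + Ch_n^3 r^{\alpha-3}$.

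Adding the two bounds, the $\pm h_n s_0\Phi(s_0)\alpha r^{\alpha-1}$ terms cancel exactly, the second-order terms combine to $h_n^2 s_0^2\Phi(s_0)\alpha(\alpha-1)r^{\alpha-2}$, and the remaining error stays $O(h_n^3 r^{\alpha-3})$. Dividing by $h_n^2$ and using $\alpha(\alpha-1)<0$, the residual $O(h_n/r)r^{\alpha-2}$ can be absorbed into a factor $(1-\delta)$ as soon as $|x|\geq Ch_n$ with $C$ of order $1/\delta$. The hard part is the max-estimate: it requires the error to be $O(h_n^3 r^{\alpha-3})$ rather than the naive $O(h_n^2 r^{\alpha-2}\sqrt{\eta})$, and it is precisely the strict quadratic concavity at $s_0$ supplied by \eqref{eq:uniquemax}, combined with Lipschitz control on $s^2\Phi(s)$, that forces the maximizer to satisfy $|s^+-s_0|=O(\eta)$ and delivers the sharper rate needed for the cancellation.
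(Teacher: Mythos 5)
Your argument is correct. Both you and the paper reduce to a one-dimensional optimization over $s=|x-y|/h_n$ and use the strict-concavity hypothesis~\eqref{eq:uniquemax} to locate the maximizer near $s_0$, but the mechanics differ in two ways. First, the paper exploits the exact radial structure of $v$ to write $H_nv(x)$ as a difference of one-dimensional maxima over $\Phi(s)\bigl((|x|\pm sh_n)^\alpha-|x|^\alpha\bigr)$, whereas you Taylor-expand $v$ in $(s,\omega)$-coordinates and perform the $\omega$-optimization explicitly via the concave quadratic in $c=\omega\cdot\hat x$; your route is a bit more verbose but more generic, since it never needs the closed form of $(|x|+sh_n)^\alpha$. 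Second, and more substantively, the paper takes the \emph{same} maximizing value $s_n$ in both the max and the min terms, which collapses the expression into a second central difference $\Phi(s_n)\bigl((|x|+s_nh_n)^\alpha+(|x|-s_nh_n)^\alpha-2|x|^\alpha\bigr)\le s_n^2\Phi(s_n)\alpha(\alpha-1)|x|^{\alpha-2}h_n^2$ in one line, and only then invokes \eqref{eq:uniquemax} to show $s_n^2\Phi(s_n)\ge(1-\delta)s_0^2\Phi(s_0)$. You instead evaluate the min side at the fixed test point $(s_0,-\hat x)$ and prove a separate quantitative upper bound $\max_s\Psi(s)\le\Psi(s_0)+O(h_n^3 r^{\alpha-3})$ so that the first-order terms cancel algebraically. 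Both cancellations are exact; neither is more ``forced'' than the other.

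One small overstatement: you claim the argument ``requires'' the $O(\eta)$ error in the max-estimate rather than the naive $O(\sqrt{\eta})$. That is not needed. Whichever route one takes, the $O(h_n)$ terms cancel identically, and the residual only has to be $o(r^{\alpha-2})$ as $\eta=h_n/r\to 0$; a rate $O(\sqrt{\eta})$ (which is in fact what the paper obtains from $\theta(s_n-s_0)^2\le Ch_n/|x|$, hence $|s_n-s_0|=O(\sqrt{\eta})$) is equally sufficient to absorb the error into the factor $(1-\delta)$ by choosing $C$ large depending on $\delta$. The quadratic concavity from \eqref{eq:uniquemax} is indeed indispensable for locating the maximizer near $s_0$ at all --- without it there is no lower bound on $s^2\Phi(s)$ at the maximizer --- but the precise power of $\eta$ is immaterial.
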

\begin{proof}
Note that 
\begin{align*}
 \hspace{1cm} H_nv(x)&=\max_{z\in B(0,2)}\Phi (|z|)\left( |x +h_nz|^\alpha  -|x|^\alpha  \right) +\min_{z\in B(0,2)}\Phi (|z|)\left( |x +h_nz|^\alpha  -|x|^\alpha  \right)\\
&= \max_{0\leq s\leq 2}\Phi (s)\left( (|x| +sh_n)^\alpha  -|x|^\alpha  \right) +\min_{0\leq s\leq 2}\Phi (s)\left( (|x| -sh_n)^\alpha  -|x|^\alpha  \right).
\end{align*}
Let  $s_n\geq 0$  such that
\[A_n:=\Phi (s_n)\left( (|x| +s_nh_n)^\alpha  -|x|^\alpha  \right)= \max_{0\leq s\leq 2}\Phi (s)\left( (|x| +sh_n)^\alpha  -|x|^\alpha  \right).\] 
Then we have that 
\[H_nv(x)\leq \Phi (s_n)\left( (|x| +s_nh_n)^\alpha  +(|x| -s_nh_n)^\alpha  -2|x|^\alpha  \right).\]
By Taylor expanding the right-hand side we have 
\begin{equation}\label{eq:up}
H_nv(x)\leq s_n^2\Phi (s_n)\alpha (\alpha  -1)|x|^{\alpha  -2}h_n^2.
\end{equation}

Note that 
\begin{align*}
A_n&=\Phi (s_n)\left( (|x| +s_nh_n)^\alpha  -|x|^\alpha  \right)\\
&\leq \Phi (s_n)\left( |x|^\alpha  +\alpha s_nh_n|x|^{\alpha  -1} -|x|^\alpha  \right)\\
&=s_n\Phi (s_n)\alpha |x|^{\alpha  -1}h_n.
\end{align*}
On the other hand, we have 
\begin{align*}
A_n&\geq \Phi (s_0)\left( (|x| +s_0h_n)^\alpha  -|x|^\alpha  \right)\\
&\geq \Phi (s_0)\left( |x|^\alpha  +\alpha s_0h_n|x|^{\alpha  -1} +\frac{\alpha (\alpha  -1)}{2}s_0^2h_n^2|x|^{\alpha  -2} -|x|^\alpha  \right)\\
&=\frac{1}{2}s_0\Phi (s_0)\alpha |x|^{\alpha  -1}h_n\left(2 +(\alpha  -1)s_0h_n|x|^{ -1}  \right).
\end{align*}
Combining these two inequalities we find that 
\begin{equation}\label{eq:lb1}
s_n\Phi (s_n)\geq \left(1 - \tfrac{1}{2}(1 -\alpha )s_0h_n|x|^{ -1} \right)s_0\Phi (s_0).
\end{equation}
By \eqref{eq:uniquemax} we have 
\[\theta (s_n -s_0)^2\leq s_0\Phi (s_0) -s_n\Phi (s_n)\leq \frac{1}{2}(1 -\alpha )s_0^2\Phi (s_0)h_n|x|^{ -1}.\]
Therefore 
\[s_n\geq \left( 1 -\theta ^{ -1/2}(1 -\alpha )^{1/2}h_n^{1/2}|x|^{ -1/2}\right)s_0.\]
Combining this with \eqref{eq:lb1}, we see that for every $\delta>0$ there exists $C>0$ such that
\[s_n^2\Phi(s_n) \geq (1-\delta)s_0^2\Phi(s_0)\]
for $|x|\geq Ch_n$.
The proof is completed by inserting this into \eqref{eq:up} and noting that $\alpha-1<0$.
\end{proof}

We now establish that our barrier function $|x-y|^\alpha$ is a supersolution away from $x=y$.
\begin{lemma}\label{lem:super}
Let  $p>d$,  $\sigma \in [0,1]$,  $\alpha <(p -d)/(p -1)$,  and for $y\in \T^d$, and set $v_y(x)=|x-y|^{\alpha }$.
 Then there exists  $C,c,\delta,r>0$, depending only on $d$, $p$, and $\alpha $, so that 
\begin{align*}
 \P\left[\forall y\in \X_n,x\in X_n\cap B(y,r)\setminus B^0(y,ch_n^\sigma),\ L^p_nv_y(x)\leq 0\right]\geq 1-\exp\left(  -\delta nh_n^{q} +C\log(n) \right),
\end{align*}
where $q = \max\{d+2+2\sigma,3d/2\}$. 
Furthermore, if  $\sigma <1$ then the Lemma holds for any $c>0$, provided $h_n$ is sufficiently small, and $C,\delta$, and $r$ now additionally depend on $c$.
\end{lemma}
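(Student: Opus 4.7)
The plan is to show that the barrier function $v_y(x)=|x-y|^\alpha$ is, up to small random errors, a strict supersolution of the continuum weighted $p$-Laplace operator, and then transfer this to the graph via the consistency results of Section~\ref{sec:consistency}. The decisive algebraic fact is that since $v_y$ is radial about $y$, we have $\Delta_\infty v_y=\alpha(\alpha-1)|x-y|^{\alpha-2}$, while Proposition~\ref{prop:supersolution} gives
\[\Delta_2 v_y(x)=\alpha|x-y|^{\alpha-2}\bigl[f(x)(d+\alpha-2)+2\nabla f(x)\cdot(x-y)\bigr].\]
The coefficients combine through the identity $(d+\alpha-2)+(p-2)(\alpha-1)=\alpha(p-1)-(p-d)=-\eta$, where $\eta:=(p-d)-(p-1)\alpha>0$ under the hypothesis $\alpha<(p-d)/(p-1)$. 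Hence
\[f(x)^{-1}\Delta_2 v_y(x)+(p-2)\Delta_\infty v_y(x)\leq -\tfrac{\alpha\eta}{2}|x-y|^{\alpha-2}\quad\text{for all }|x-y|\leq r,\]
where $r>0$ depends only on $d,p,\alpha$ and the bounds on $f$ and $\nabla f$.

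To propagate this to the graph, I would apply Theorem~\ref{thm:consistency2} with tolerance $\delta_n:=c_0 h_n^\sigma$ (for a small constant $c_0$), using the pointwise bound $\|v_y\|_{C^3(B(x,2h_n))}\leq C|x-y|^{\alpha-3}$ valid once $|x-y|\geq ch_n^\sigma$ and $h_n$ is sufficiently small. This gives, with probability at least $1-\exp(-c\,nh_n^{d+2+2\sigma}+C\log n)$,
\[\tfrac{1}{d_n(x)}L^2_n v_y(x)=\tfrac{C_2}{C_1 f(x)}h_n^2\Delta_2 v_y(x)+O\!\bigl(h_n^2\delta_n|x-y|^{\alpha-3}\bigr).\]
For the $\infty$-Laplacian term, Proposition~\ref{prop:supersolution2} with a small fixed $\delta'$, combined with Lemma~\ref{lem:approximation1}, yields
\[\lambda(p-2)L^\infty_n v_y(x)\leq (1-\delta')\tfrac{C_2(p-2)}{C_1}h_n^2\alpha(\alpha-1)|x-y|^{\alpha-2}+O\!\bigl(|x-y|^{\alpha-1}r_n^2 h_n^{-1}\bigr),\]
using the defining identity $\lambda s_0^2\Phi(s_0)=C_2/C_1$ to align prefactors.

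Adding the two estimates and factoring out $\tfrac{C_2}{C_1}\alpha h_n^2|x-y|^{\alpha-2}$, the bracketed remainder becomes
\[-\eta+O(|x-y|)+O(\delta'(1-\alpha))+O(\delta_n/|x-y|)+O(|x-y|\,r_n^2/h_n^3).\]
I would choose $r$ small enough that the first remainder is $\leq\eta/8$; $\delta'$ small enough for the second; $c_0$ small enough that $\delta_n/|x-y|\leq c_0/c\leq\eta/8$ when $|x-y|\geq ch_n^\sigma$; and invoke the standard nearest-neighbor bound $\mathbb{P}[r_n>c'h_n^{3/2}]\leq\exp(-c\,nh_n^{3d/2}+C\log n)$ to control the last error (the $|x-y|\leq r$ bound makes this uniform in $|x-y|$, requiring only $r_n\lesssim h_n^{3/2}$). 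On the resulting event, $L^p_n v_y(x)\leq 0$. A union bound over $y\in\X_n$ and $x\in X_n$ costs only $C\log n$ in the exponent and produces the claimed probability with $q=\max\{d+2+2\sigma,3d/2\}$. For $\sigma<1$, $|x-y|\geq ch_n^\sigma$ is automatically $\gg h_n$ for small $h_n$, so any constant $c>0$ works; for $\sigma=1$ one needs $c$ sufficiently large so that Proposition~\ref{prop:supersolution2} applies.

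The main obstacle is the blow-up of the $C^k$-norms of $v_y$ as $x\to y$: the Bernstein tolerance $\delta_n$ must scale like $h_n^\sigma$ to beat the $|x-y|^{-1}$ factor in the $L^2_n$ error, which pins down the $(d+2+2\sigma)$-term in $q$; the $(3d/2)$-term arises from requiring $r_n\lesssim h_n^{3/2}$ to control the nearest-neighbor error in the $\infty$-Laplacian approximation, and the uniform upper bound $|x-y|\leq r$ is what prevents this from degrading as $\sigma$ increases. The interplay between the two pieces of the game-theoretic $p$-Laplacian is essential: the negative sign of $\alpha(\alpha-1)$ in $\Delta_\infty v_y$ provides the strict supersolution margin that makes the entire argument close.
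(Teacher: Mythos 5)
Your proposal is correct and follows essentially the same route as the paper: the algebraic margin $\alpha(p-1)-(p-d)<0$, consistency for $L^2_n$ with tolerance scaled by $h_n^\sigma$ (producing the $d+2+2\sigma$ exponent), Proposition~\ref{prop:supersolution2} plus Lemma~\ref{lem:approximation1} for the $\infty$-term, the bound $r_n^2\lesssim h_n^{3}$ (giving the $3d/2$ exponent), and a union bound. The only detail the paper handles that you omit is that $x$ may lie within $2h_n$ of $\O$, where Theorem~\ref{thm:consistency2} does not directly apply; the fix is immediate (the sums over $\O$ contribute $O(1)$, negligible after normalization since $nh_n^{d+2}\geq 1$).
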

\begin{proof}
Contrary to Theorem \ref{thm:consistency2} we may have $\mbox{dist}(x,\O) \leq 2h_n$, and must account for this. Notice that
\[L^2_nv_y(x) = \sum_{z\in X_n} w_n(x,z)(v_y(z)-v_y(x)) + \sum_{z\in \O} w_n(x,z)(v_y(z)-v_y(x)),\]
and
\[d_n(x) = \sum_{z\in X_n} w_n(x,z) + \sum_{z\in \O} w_n(x,z).\]
The argument from Theorem \ref{thm:consistency2} applies to the summations over $X_n$ in the equations above, and the summations over $\O$ are bounded by a constant.  Thus, assuming $nh_n^{d+2}\geq 1$ and $|x-y|\geq \max\{ h_n^\sigma,3h_n\}$, Theorem \ref{thm:consistency2} and Proposition \ref{prop:supersolution} yield
\begin{align}\label{eq:L2error}
\frac{L^2_nv_y(x)}{d_n(x)h_n^2}&\leq \frac{C_2\Delta _2v_y(x) +C(|x-y|^{\alpha  -3}\delta h_n^\sigma + n^{-1}h_n^{-d-2}) }{C_1f(x) -C(\delta h_n^{1 +\sigma }+n^{-1}h_n^{-d})}\nonumber\\
&\leq \frac{C_2\alpha f(x) |x-y|^{\alpha  -2}(d +\alpha  -2 +C|x-y| +C\delta)}{C_1f(x) -C\delta h_n^{1 +\sigma }}\nonumber\\
&\leq \frac{C_2}{C_1}\alpha |x-y|^{\alpha  -2}(d +\alpha  -2 +C|x-y| +C\delta).
\end{align}
holds for all $y\in \X_n$ and  $x\in X_n$ with $|x-y|\geq \max\{ h_n^\sigma,3h_n\}$ with probability at least
\begin{equation}\label{eq:probability2}
1-C\exp\left(  -\delta ^2nh_n^{d +2 +2\sigma } +\log(n) \right).
\end{equation}
For the rest of the proof we assume that \eqref{eq:L2error} holds.

Let  $\delta '>0$. By Proposition \ref{prop:supersolution2} there exists  $C_{\delta '}$ such that 
\[\frac{1}{h_n^2}H_nv_y(x)\leq (1 -\delta ')s_0^2\Phi (s_0)\alpha (\alpha  -1)|x-y|^{\alpha  -2}\]
for all  $|x-y|\geq C_{\delta '}h_n$.   By Lemma \ref{lem:approximation1} we have 
\[L^\infty_nv_y(x)\leq H_nv_y(x) +C(|x-y|^{\alpha  -1} +h_n|x-y|^{\alpha  -2})r_n^2h_n^{ -1}\]
whenever  $|x-y|\geq 3h_n$.  Therefore 
\[\frac{1}{h_n^2}L^\infty_nv_y(x)\leq \alpha |x-y|^{\alpha  -2} s_0^2\Phi (s_0)\left( (1 -\delta ')(\alpha  -1) +C|x-y|r_n^2h_n^{ -3} \right )\]
whenever  $|x-y|\geq C_{\delta '}h_n$. Combining this with \eqref{eq:L2error} gives
\[ h_n^{-2}L^p_nv_y(x)\leq C|x-y|^{\alpha  -2}\left(\alpha(p-1) + d-p +C|x-y| +C|x-y|r^2_nh_n^{ -3} +C\delta +C\delta' \right)\]
for all $y\in \X_n$ and  $x\in X_n$  with  $|x-y|\geq \max\{h_n^\sigma ,C_{\delta '}h_n\}$.   

 We now bound  $r_n$. Let  $t>0$  and partition  $\T^d$ into boxes  $R_1,R_2,R_3,\dots,R_M$ of side length  $t/\sqrt{d}$.      If  $r_n>t$ then at least one box  $R_i$   contains no points from $X_n$.    It follows that 
 \begin{align*}
 \P[r_n>t]&\leq \sum_{i=1}^M \P[X_n\cap R_i=\varnothing ]\\
 &\leq M(1 -Ct^d)^{n }\\
 &\leq C\exp\left( n\log(1 -Ct^d) -d\log (t) \right)\\
 &\leq C\exp\left(  -Cnt^d -d\log (t) \right).
 \end{align*}
 Since we assume that  $h_n\geq n^{ -1/d}$ we have that  
 \[\P[r_n^2>h_n^{3}]\leq C\exp\left(  -Cnh_n^{3d/2} +4\log (n) \right).\] 
The proof is completed by choosing  $\delta ,\delta '>0$ sufficiently small and noting that by assumption we have  $\alpha (p -1) +d -p<0$.   
\end{proof}

\begin{figure}
\centering
\includegraphics[clip=true,trim = 30 20 30 20, width=0.7\textwidth]{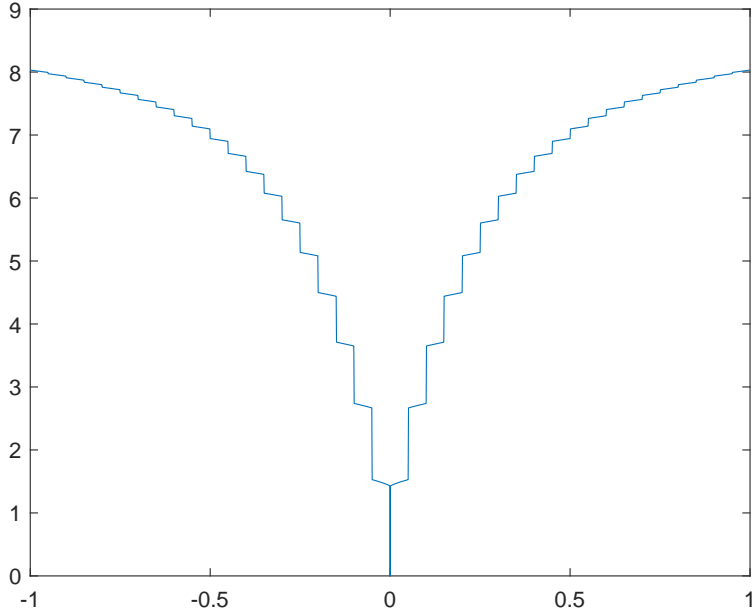}
\caption{A depiction of the local barrier, defined in \eqref{eq:ws}, used to complete the proof of H\"older regularity.}
\label{fig:barrier}
\end{figure}
In order to use the barrier function technique to prove a global H\"older regularity result, we need a barrier that is a supersolution globally, including within the local $O(h_n)$ neighborhood of $x=y$ where the barrier from Lemma \ref{lem:super} fails to be a supersolution. In Lemma \ref{lem:finesuper} we show how to construct a barrier in this local neighborhood using a construction that heavily exploits the $\min$/$\max$ structure of the $\infty$-Laplace term. Before giving the proof and the explicit form for the barrier, let us describe the idea heuristically. The new local barrier, defined in Equation \eqref{eq:ws} below and depicted in Figure \ref{fig:barrier}, consists of sharp jump discontinuities that decay in size rapidly away from the origin. The spacing between jumps is $h_n/2$ to ensure that in the definition of the $\infty$-Laplacian, the $\min$ term will take a large negative jump, while the $\max$ term (and the $2$-Laplacian term) will take a much smaller positive jump, resulting in a large negative value for $L^p_nv_y(x)$, even at points arbitrarily close to the origin. We note that this barrier does not depend on the $p>d$ assumption and works for arbitrary $p$. The sharp decay in the jump size away from the origin prevents the barrier from being a supersolution outside of this $O(h_n)$ neighborhood when $p<d$. 

We now give the statement and proof of Lemma \ref{lem:finesuper}.

\begin{lemma}\label{lem:finesuper}
Let  $p>2$ and  $0 < \alpha < 1$.  For $M>0$ and $y\in \T^d$ define
\begin{equation}\label{eq:ws}
v_y(x)=|x-y|^\alpha + Mh_n^\alpha\sum_{k=1}^{\infty}\beta^k 1_{\{2|x-y|>(k-1)h_n\}},
\end{equation}
where
\[\beta =\frac{\lambda (p -2)}{8(m\lambda (p -2) +1)} \ \ \mbox{ and } \ \ m = \max_{0\leq s \leq 2} \Phi(s).\]
Then for every $c >0$ there exists  $M>0$ such that   
\[ \P\left[\forall y\in \X_n, x\in X_n\cap B(y, (c+4)h_n)\setminus \{y\},\ L^p_nv_y(x)\leq 0\right]\geq 1-\exp\left(  -Cnh_n^{d} +2\log(n) \right).\]
\end{lemma}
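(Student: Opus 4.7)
The plan is to exploit the asymmetry between the $\min$ and $\max$ terms in $L^\infty_n v_y(x)$: the staircase structure of $v_y^{\mathrm{jump}}$ is engineered so that some graph point between $x$ and $y$ crosses a single downward jump of size $Mh_n^\alpha\beta^{k^\ast}$, making $\min_z w_n(x,z)(v_y(z)-v_y(x))$ very negative, while no point within distance $2h_n$ of $x$ can see more than a few upward jumps of the much smaller size $Mh_n^\alpha\beta^{k^\ast+1}$. I define the band index $k^\ast$ by $r := |x-y| \in [(k^\ast-1)h_n/2, k^\ast h_n/2)$; since $r \leq (c+4)h_n$, we have $k^\ast \leq 2c+9$. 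Write $v_y = v_y^{\mathrm{sm}} + v_y^{\mathrm{jump}}$ with $v_y^{\mathrm{sm}}(x) = |x-y|^\alpha$, and set $A_k = \{z \in \T^d : 2|z-y| > (k-1)h_n\}$.

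First I upper bound $L^2_n v_y(x)/d_n(x)$. Subadditivity gives $\bigl||z-y|^\alpha - |x-y|^\alpha\bigr| \leq |z-x|^\alpha \leq 2h_n^\alpha$, hence $\bigl|L^2_n v_y^{\mathrm{sm}}(x)\bigr|/d_n(x) \leq 2h_n^\alpha$. For the jump part, $x \in A_k$ whenever $k \leq k^\ast$, forcing $L^2_n 1_{A_k}(x) \leq 0$ on those indices; discarding them for the upper bound and using $L^2_n 1_{A_k}(x) \leq d_n(x)$ on the remaining ones yields $L^2_n v_y^{\mathrm{jump}}(x)/d_n(x) \leq Mh_n^\alpha \sum_{k \geq k^\ast+1}\beta^k = Mh_n^\alpha \beta^{k^\ast+1}/(1-\beta)$. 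These same two estimates also give $\max_z w_n(x,z)(v_y(z)-v_y(x)) \leq m\bigl[2h_n^\alpha + Mh_n^\alpha\beta^{k^\ast+1}/(1-\beta)\bigr]$.

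The essential input is the $\min$, for which I construct a witness $z^\ast$. When $r < h_n$, I take $z^\ast = y$ and observe $w_n(x,y) = \Phi(r/h_n) \geq 1$ by \eqref{eq:phi}, with $v_y(y)-v_y(x) \leq -Mh_n^\alpha\beta^{k^\ast}$ deterministically. When $r \geq h_n$, I place the target $z_0 = x + \tfrac{3h_n}{4}\tfrac{y-x}{|y-x|}$; every $z \in B(z_0, h_n/8)$ then satisfies $|z-x| \in [5h_n/8, 7h_n/8]$ so that $w_n(x,z) \geq 1$, and $|z-y| \leq r - 5h_n/8 < (k^\ast-1)h_n/2$ so that $J(|z-y|) \leq J(r) - \beta^{k^\ast}$, producing $v_y(z) - v_y(x) \leq -Mh_n^\alpha\beta^{k^\ast}$. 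By Lemma~\ref{lem:ch}, $X_n \cap B(z_0, h_n/8)$ is nonempty with probability at least $1 - \exp(-C' nh_n^d)$; a union bound over the $O(n^2)$ pairs $(x,y) \in \X_n \times \X_n$ produces the stated failure probability.

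Combining gives $L^\infty_n v_y(x) \leq 2mh_n^\alpha + Mh_n^\alpha\beta^{k^\ast}\bigl(m\beta/(1-\beta) - 1\bigr)$. The identity $\lambda(p-2)(1-8m\beta) = 8\beta$, implicit in the definition of $\beta$, forces $\beta < 1/(8m)$ and so $m\beta/(1-\beta) \leq 1/7$. Assembling $L^p_n v_y(x) = L^2_n v_y(x)/d_n(x) + \lambda(p-2) L^\infty_n v_y(x)$ and substituting the identity, a short algebraic check shows that the coefficient of $Mh_n^\alpha\beta^{k^\ast}$ is strictly negative for every $p > 2$, with absolute value at least a positive $m$-dependent multiple of $\beta$. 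Since $k^\ast \leq 2c+9$ is bounded, taking $M$ large enough (depending on $c$, $p$, $\Phi$) makes the resulting negative contribution dominate the remaining additive $O(h_n^\alpha)$ terms, yielding $L^p_n v_y(x) \leq 0$. The main technical obstacle is the geometric calibration of the witness: the constants $3h_n/4$ and $h_n/8$ must be chosen so that every point of $B(z_0, h_n/8)$ lies within distance $h_n$ of $x$ (keeping $w_n \geq 1$) \emph{and} strictly inside the band $\{z : |z-y| < (k^\ast-1)h_n/2\}$, and this calibration propagates into every subsequent estimate.
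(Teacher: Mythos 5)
Your proof follows essentially the same strategy as the paper's: decompose $v_y$ into the H\"older part and the staircase part, control $L_n^2 v_y/d_n$ and the $\max$ term by showing that within $2h_n$ of $x$ the staircase can only increase by a geometric tail starting at $\beta^{k^\ast+1}$, and then exhibit a witness point in $X_n$ that crosses a full jump of height $M\beta^{k^\ast}h_n^\alpha$ to make the $\min$ term very negative. The algebraic endgame, using the identity defining $\beta$ to show that the coefficient of $M\beta^{k^\ast}h_n^\alpha$ in $L_n^p v_y$ is bounded above by a negative multiple of $\lambda(p-2)$, matches the paper's computation exactly.

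The only substantive difference is in the construction of the witness ball. The paper uses $B_x := B(x,h_n)\cap B(y,|x-y|-h_n/2)$, which has volume $\geq ah_n^d$ for $|x-y|>h_n$, while you place a ball $B(z_0,h_n/8)$ of explicit radius at an explicit point $z_0 = x + \tfrac{3h_n}{4}\tfrac{y-x}{|y-x|}$ along the segment from $x$ to $y$. Both work: your calibration ensures $|z-x|\in[5h_n/8,7h_n/8]\subset(0,h_n)$ so $w_n(x,z)\geq 1$, and $|z-y|\leq r-5h_n/8$ lands strictly inside the previous band, exactly what is needed. One cosmetic caveat: with your half-open band convention $r\in[(k^\ast-1)h_n/2,k^\ast h_n/2)$, the claim ``$x\in A_k$ whenever $k\leq k^\ast$'' fails on the left endpoint $r=(k^\ast-1)h_n/2$, where $x\notin A_{k^\ast}$; this is harmless because it is a measure-zero event for the random samples (and in that case the witness bound is in fact $-M\beta^{k^\ast-1}h_n^\alpha$, which is stronger), but flipping the convention to $r\in((k^\ast-1)h_n/2,k^\ast h_n/2]$ as in the paper removes the caveat cleanly.
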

\begin{proof}
Let $c>0$ and let $\Omega$ denote the event that for every $y\in \X_n$ and every $x \in B(y,(c+4)h_n)\cap X_n$  with $|x-y|> h_n$, the set
\[B_x:= B(x,h_n) \cap B(y,|x-y|-h_n/2)\]
has a nonempty intersection with $X_n$. There exists $a>0$ such that $|B_x| \geq ah_n^d$ for all $|x|> h_n$. Therefore $\P[\Omega] \geq 1-n^2e^{Cnh_n^d}$. For the rest of the proof, we assume $\Omega$ occurs.

Let $y\in \X_n$,  $x\in B(y,(c+4)h_n)\cap X_n$ and let  $k\in \N$ such that  $(k -1)h_n<2|x-y|\leq kh_n$. For  $z\in B(x,2h_n)$ we have
\begin{align*}
 v_y(z) -v_y(x)&\leq Mh^\alpha (\beta ^{k +4} +\beta ^{k +3} +\beta ^{k +2} +\beta ^{k +1}) +(|x-y| +2h_n)^\alpha  -|x-y|^\alpha \\
&\leq 4M\beta ^{k +1}h_n^\alpha  +((c+4)h_n+2h_n)^\alpha\\
&\leq 4M\beta ^{k +1}h_n^\alpha  +(c+6)^\alpha h_n^\alpha.
\end{align*}
Therefore 
\[\frac{1}{d_n(x)}L^2_nv_y(x)\leq 4M\beta ^{k +1}h_n^\alpha  +(c+6)^\alpha h_n^\alpha,\]
and
\[\max_{z\in X_n}w_n(x,z)(v_y(z) -v_y(x))\leq 4Mm\beta ^{k +1}h_n^\alpha  +m(c+6)^\alpha h_n^\alpha.\]
If $k=1,2$, then $y \in B(x,h_n)$ and since $v_y(y) = 0$ we have
\[\min_{z\in X_n}w_n(x,z)(v_y(z) -v_y(x))\leq  -M\beta ^{k}h_n^\alpha ,\]
If $k\geq 3$, then $|x-y|>h_n$. Since $B_x\cap X_n$ is nonempty, there exists $z \in B(x,h_n)\cap X_n$ such that $2|z-y| \leq (k-1)h_n$, and hence 
\[\min_{z\in X_n}w_n(x,z)(v_y(z) -v_y(x))\leq  -M\beta ^{k}h_n^\alpha.\]
Combining the above observations we have
\begin{align*}
 \hspace{0.5cm} h_n^{ -\alpha }L^p_nv_y(x)&\leq  4M\beta ^{k +1}  +(c+6)^\alpha +\lambda (p -2)\left(   4Mm\beta ^{k +1}  +m(c+6)^\alpha -M\beta ^k \right)\\
&=(c+6)^\alpha(1 +2\lambda (p -2))  -\frac{1}{2}\lambda (p-2)\beta^k M,
\end{align*}
for $x\in B(y,(c+4)h_n)\cap X_n$ with $y\neq x$.
Now we can simply choose  $M>0$ large enough so that  $L^p_nv_y(x)\leq 0$ for all  $k\leq 2c +1$.    
\end{proof}

We are now ready to give the proof of Theorem \ref{thm:holder}. The proof involves patching together the barrier functions provided in Lemmas \ref{lem:super} and \ref{lem:finesuper}, and using the global barrier to establish H\"older regularity.

\begin{proof}[Proof of Theorem \ref{thm:holder}]
Choose  $r,c,\delta>0$ so that the conclusions of Lemma \ref{lem:super} hold for  $\sigma =1$. We may assume without loss of generality that  $c$ is a positive integer.   Define  $\beta$ as in Lemma \ref{lem:finesuper} and let  $M>0$ so that the conclusions of Lemma \ref{lem:finesuper}  hold with the aforementioned value of $c$.   For the rest of the theorem, we assume the conclusions of Lemmas \ref{lem:super} and \ref{lem:finesuper} hold, which is an event with probability at least $1-\exp(-\delta nh_n^q + C\log(n))$, where $q=\max\{d+4,3d/2\}$.

The proof is now split into three steps.  

1. Let $y\in \X_n$,  define  $k_0=2c +5$ and set 
\[ v_1(x)=|x-y|^\alpha  +Mh_n^\alpha \sum_{k=1}^\infty \beta ^k1_{\{2|x -y|>(k -1)h_n \}},\]
\[ v_2(x)=|x-y|^\alpha  +\frac{M\beta h_n^\alpha }{1 -\beta }(1 -\beta ^{k_0}),\]
and
\[ v_y(x)=\min\left\{ v_1(x),v_2(x) \right\}.\]
We claim that $L^p_nv_y(x) \leq 0$ for all $0<|x-y| \leq r$. To see this, 
notice that for  $|x -y|\leq (c +2)h_n$ we have $v_y(x)=v_1(x)$ and for  $|x -y|\geq (c +2)h_n$ we have  $v_y(x)=v_2(x)$. By Lemmas \ref{lem:super} and \ref{lem:finesuper} we see that  $L^p_nv_1(x)\leq 0$ whenever  $|x -y|\leq (c +4)h_n$  and  $L^p_nv_2(x)\leq 0$   whenever $|x -y|\geq ch_n$. It follows that  $L^p_nv_y(x)\leq 0$ for all  $x\neq  y$  such that $r\geq |x -y|\geq (c +4)h_n$ or  $|x -y|\leq ch_n$.     If  $(c +2)h_n\leq |x -y|\leq (c +4)h_n$ then  $v_y(x)=v_2(x)$ and we have  $L^p_nv_y(x)\leq L^p_nv_2(x)\leq 0$ because $v\leq v_2$ in $B(x,2h_n)$. Likewise, if $ch_n \leq |x-y|\leq (c+2)h_n$ then $L^p_nv_y(x)\leq L^p_nv_1(x) \leq 0$. This establishes the claim.

2. Let $y\in \O$. Let $C>0$ be large enough so that for all $x,y\in \O$
\[ Cv_y(x) \geq \max_\O g - \min_\O g.\]
 It follows that
\[g(y) -Cv_y(x) \leq u_n(x) \leq g(y) + Cv_y(x)\]
holds for all $x\in \O$. By the maximum principle (Theorem \ref{thm:maxprinc}) and the fact that $u_n(y)=g(y)$ we have that 
\begin{equation}\label{eq:gkey}
u_n(y) -Cv_y \leq u_n \leq u_n(y) + Cv_y
\end{equation}
for all $y\in \O$.
   
3.  Let  $y\in X_n$. We claim that
\[u_n(y) - Cv_y(z) \leq u_n(z) \leq u_n(y) + Cv_y(z)\]
holds for all $z\in \O$. Indeed, recalling \eqref{eq:gkey} we have 
\[u_n(y) \geq u_n(z) - Cv_z(y) = u_n(z) - Cv_y(z),\]
and
\[u_n(y) \leq u_n(z) + Cv_z(y) = u_n(z) +Cv_y(z),\]
which establishes the claim.
By the maximum principle (Theorem \ref{thm:maxprinc}) we have that 
\begin{equation}\label{eq:ukey}
u_n(y) -Cv_y \leq u_n \leq u_n(y) + Cv_y
\end{equation}
for all $y\in X_n$.
   
4. Notice that 
\begin{equation}\label{eq:vy}
|x -y|^\alpha \leq v_y(x)\leq |x -y|^\alpha  +\frac{M\beta h_n^\alpha}{1-\beta}.
\end{equation}
Combining this with \eqref{eq:gkey} and \eqref{eq:ukey} we have 
\begin{equation}\label{eq:hg2}
u_n(y) -C(|x -y|^\alpha  +h^\alpha _n)\leq u_n(x)\leq u_n(y) +C(|x -y|^\alpha  +h^\alpha _n), 
\end{equation}
for all $x,y\in \X_n$, which completes the proof.
\end{proof}

\section{Convergence proof}
\label{sec:conv}

In this section we prove Theorem \ref{thm:main}. We first recall the notion of viscosity solution for
\begin{equation}\label{eq:pde}
F(\nabla^2 u,\nabla u,x)=0\ \ \mbox{ in } \Omega,
\end{equation}
 where $\Omega\subset \R^d$  is open. 
\begin{definition}\label{def:viscosity}
We say that  $u\in C(\Omega)$ is a viscosity subsolution of   \eqref{eq:pde} if for every  $x_0\in \Omega$ and $\varphi \in C^\infty(\Omega)$ such that  $u -\varphi $  has a local maximum at $x_0$ we have 
\[F(\nabla^2 \phi(x_0),\nabla \phi(x_0),x_0)\leq 0.\]      
We say that  $u\in C(\Omega)$ is a viscosity supersolution of \eqref{eq:pde} if for every  $x_0\in \Omega$ and $\varphi \in C^\infty(\Omega)$ such that  $u -\varphi $  has a local minimum at $x_0$ we have 
\[F(\nabla^2 \phi(x_0),\nabla \phi(x_0),x_0)\geq 0.\]      
We say that  $u\in C(\Omega)$ is a viscosity solution of \eqref{eq:pde} if  $u$ is both a viscosity sub- and supersolution of \eqref{eq:pde}. 
\end{definition}
Let  $\pi :\R^d\to \T^d$  be the projection operator.
\begin{definition}\label{def:viscosity2}
A function  $u\in C(\T^d)$  is a viscosity solution of \eqref{eq:pLap} if $u=g$  on $\O$ and $v(x):=u(\pi (x))$ is a viscosity solution of \eqref{eq:pde} with      
\[F_p(X,q,x) := -|q|^{p-2}(\mbox{Trace}(X) + 2\nabla \log f(x) \cdot q + (p-2)|q|^{-2}q\cdot X q),\]
and $\Omega:=\pi ^{ -1}(\T^d\setminus \O)$.
\end{definition}
We first verify uniqueness of viscosity solutions of \eqref{eq:pLap}.
\begin{theorem}\label{thm:unique}
Assume $p>d \geq 2$ and $f\in C^{2}(\T^d)$  is positive. Then there exists a unique viscosity solution of \eqref{eq:pLap}. 
\end{theorem}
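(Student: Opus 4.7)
The plan is to prove a comparison principle for viscosity sub- and supersolutions of \eqref{eq:pLap} and then use it both for uniqueness (directly) and for existence (via Perron's method). The comparison I aim to establish is: if $u$ is an upper semicontinuous viscosity subsolution and $v$ a lower semicontinuous viscosity supersolution with $u \leq v$ on $\O$, then $u \leq v$ on $\T^d$. Uniqueness is an immediate consequence applied to two solutions in both directions; existence follows by defining $\bar{u}$ as the pointwise supremum of viscosity subsolutions dominated by a global upper barrier that equals $g$ at $\O$, then running the standard Perron argument and using the barriers from Step 2 below to guarantee continuous attainment of the data.

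The heart of the proof is interior comparison: on any open $U \subset \T^d \setminus \O$ with $\bar{U} \cap \O = \varnothing$, I would apply the doubling-of-variables/Ishii's lemma argument from the Crandall--Ishii--Lions user's guide \cite{crandall1992user}. The operator $F_p$ is degenerate elliptic and, since $p > 2$, continuous in the gradient variable even at $q = 0$, and the smoothness of $\nabla \log f$ lets one absorb the $x$-dependence in the standard way. The main technical obstacle arises when the doubling penalty $p_\epsilon = (x_\epsilon - y_\epsilon)/\epsilon$ tends to zero along the usual sequence of near-maximizers: the prefactor $|p_\epsilon|^{p-2}$ then annihilates both the sub- and supersolution inequalities. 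This is the familiar subtlety for viscosity $p$-harmonic functions, resolved either by a perturbation trick (test against $\phi(x) - \eta |x - z|^2$ and send $\eta \to 0$) or, more cleanly, by invoking the Juutinen--Lindqvist--Manfredi equivalence between viscosity solutions of the divergence-form and the game-theoretic $p$-Laplacians to reduce to a non-degenerate problem.

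To reduce global comparison to interior comparison, I need to rule out maxima at points of $\O$. For each $y \in \O$ fix $\alpha \in \bigl(0, (p-d)/(p-1)\bigr)$; a direct Taylor computation (the continuum analogue of Propositions \ref{prop:supersolution} and \ref{prop:supersolution2}, using precisely that $\alpha(p-1) + d - p < 0$) shows that $\varphi(x) = |x - y|^\alpha$ is a \emph{strict} classical supersolution of $F_p = 0$ in $B(y, r) \setminus \{y\}$ for some small $r = r(y, \alpha, f) > 0$. Applying the interior comparison of Step 1 on the annuli $B(y, r) \setminus \bar{B}(y, \rho)$ and sending $\rho \downarrow 0$ (using continuity of $\varphi$ at $y$ and the semicontinuity of $u, v$), with the constant $C$ chosen so that $g(y) \pm C \varphi$ dominates $u, v$ on $\partial B(y, r)$, yields the two-sided bound $|u(x) - g(y)|, |v(x) - g(y)| \leq C |x - y|^\alpha$ in $B(y, r)$. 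Consequently $\limsup_{x \to y}(u - v)(x) \leq 0$ for every $y \in \O$, which, combined with the hypothesis on $\O$, forces any positive maximum of $u - v$ to be attained at an interior point of $\T^d \setminus \O$, contradicting Step 1 applied on a small ball around the maximizer. The H\"older regularity $u \in C^{0, (p-d)/(p-1)}(\T^d)$ stated in Theorem \ref{thm:main} then follows from these same barrier bounds together with the interior $C^{1, \alpha}$ regularity of \cite{tolksdorf1984regularity, lieberman1988boundary} away from $\O$.
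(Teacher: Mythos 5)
Your proposal takes a genuinely different route from the paper: you aim for a direct viscosity comparison principle via doubling of variables, then derive uniqueness and existence (via Perron) from it, whereas the paper follows Juutinen--Lindqvist--Manfredi and reduces the problem to \emph{weak} solutions: it constructs the weak solution $u_\eps$ of $-\div(f^2|\nabla u_\eps|^{p-2}\nabla u_\eps)=\eps$, shows $u_\eps$ is a viscosity \emph{supersolution} of $f^2 F_p=\eps$, invokes viscosity comparison only between $u$ and the \emph{strict} supersolution $u_\eps$ (on the truncated operator $\bar F_p$ with $\min\{M,|q|\}^{p-2}$), and then sends $\eps\to 0$ by an energy estimate plus Morrey's inequality. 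Existence is imported from Theorem~\ref{thm:main} rather than from Perron.

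The trouble with your plan is precisely the step you flagged. When the doubling penalty $p_\eps\to 0$, the factor $|p_\eps|^{p-2}$ kills both inequalities, and your two proposed repairs do not clearly close the gap. The ``test against $\phi(x)-\eta|x-z|^2$'' device is usually used to show robustness of the definition of viscosity solution near critical points of the test function, not to drive a doubling comparison for a degenerate operator to a contradiction; it is not a substitute for strictness. Your second repair --- invoking the Juutinen--Lindqvist--Manfredi equivalence --- does not ``reduce to a non-degenerate problem'' (the game-theoretic $p$-Laplacian is itself undefined at $\nabla u=0$); what it actually does is identify viscosity solutions with weak solutions, and once you do that you may as well prove uniqueness on the weak side, which is exactly the paper's strategy. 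The crucial structural device in the paper is the $\eps>0$ slack: when the penalty gradient vanishes, $f^2\bar F_p\to 0$ while the supersolution inequality demands $\geq \eps>0$, giving the contradiction. Without building in such strictness, your interior comparison is not established as written.

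Your barrier step is sound and close in spirit to the discrete barriers of Lemmas~\ref{lem:super}--\ref{lem:finesuper}: the exponent condition $\alpha(p-1)+d-p<0$ is exactly what makes $|x-y|^\alpha$ a strict classical supersolution of $F_p=0$ in a punctured ball, and using it on shrinking annuli to get continuous attainment at each $y\in\O$ and to localize a hypothetical positive maximum of $u-v$ to the interior is correct. But the interior comparison it hands control to is the part that needs repair; I would suggest replacing Step~1 by the paper's regularization-through-weak-solutions argument, after which your Step~2 can be kept essentially verbatim.
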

The proof of Theorem \ref{thm:unique} follows \cite{juutinen2001equivalence} closely, with some minor adjustments to handle the weighted $p$-Laplacian, and some simplifications due to our assumption that $p>d \geq 2$. In particular, the proof shows that weak and viscosity solutions of \eqref{eq:pLap} coincide.
\begin{proof}
Existence follows from the proof of Theorem \ref{thm:main}. We only need to prove uniqueness here.

Let $u\in C(\T^d)$ be a viscosity solution of \eqref{eq:pLap}. Extending $u$ to $\R^d$ by setting $u(x)=u(\pi(x))$, we have that $u\in C(\R^d)$ is a $\Z^d$-periodic viscosity solution of 
\begin{equation}
\left.
\begin{array}{rll}
F_p(\nabla^2 u, \nabla u, x) &= 0&\mbox{in } \Omega\\
u&=g&\mbox{on } \partial \Omega,\\
\end{array}
 \right\}
\label{eq:pLapRd}
\end{equation}
where $\Omega = \pi^{-1}(\T^d\setminus \O)$, $f(x)=f(\pi(x))$ and $g(x)=g(\pi(x))$. For $\eps \geq 0$, let $u_\eps\in W^{1,p}_{per}(\R^d)$ be the unique $\Z^d$-periodic weak solution (defined via integration by parts) of
\begin{equation}
\left.
\begin{array}{rll}
-\div\left(f^2|\nabla u_\eps|^{p-2}\nabla u_\eps\right) &= \eps&\mbox{in } \Omega\\
u_\eps&=g&\mbox{on } \partial \Omega.\\
\end{array}
 \right\}
\label{eq:pLapeps}
\end{equation}
The solutions $u_\eps$ can be constructed by the Calculus of Variations, for instance.
By regularity theory for degenerate elliptic equations \cite{tolksdorf1984regularity,lieberman1988boundary}, we have $u_\eps \in C^{1,\gamma}_{loc}(\Omega)$, and since $p>d$ we have $u_\eps\in C^{0,1-d/p}(\R^d)$ by Morrey's inequality. 

We will show that $u=u_0$. The proof is split into 4 steps.

1. First, we claim that for $\eps>0$, $u_\eps$ is a viscosity supersolution of 
\begin{equation}
f(x)^2F_p(\nabla^2 u_\eps, \nabla u_\eps, x) = \eps\ \  \mbox{ in } \ \  \Omega.
\label{eq:pLape}
\end{equation}
To see this, let $x_0 \in \Omega$ and $\phi \in C^\infty(\Omega)$ such that $u_\eps-\phi$ has a local minimum at $x_0$. We need to show that
\[-\div\left(f^2|\nabla \phi|^{p-2}\nabla \phi\right)\Big\vert_{x=x_0} \geq \eps.\]
Assume to the contrary that
\[-\div\left(f^2|\nabla \phi|^{p-2}\nabla \phi\right)\Big\vert_{x=x_0}< \eps.\]
 For $\delta>0$ define
\[\psi(x) = \phi(x) + \delta^2 +u_\eps(x_0)-\phi(x_0) - \delta|x-x_0|^4. \]
For sufficiently small $r,\delta>0$ we have $u_\eps(x) \geq \psi(x)$  for all $ x\in \partial B(x_0,r)$, and
\begin{equation}\label{eq:cont}
-\div\left(f^2|\nabla \psi|^{p-2}\nabla \psi\right)\leq \eps \ \  \mbox{ in } \ \  B(x_0,r).
\end{equation}
 The comparison principle for weak solutions of the $p$-Laplace equation yields $\psi \leq u_\eps$ in $B(x_0,r)$, which is a contradiction to the fact that $\psi(x_0)>u_\eps(x_0)$. This establishes the claim.


2. We now show that $u \leq u_\eps$ for $\eps>0$. Fix $\eps>0$ and for $\delta>0$ define
\[\Omega_\delta = \{x\in \Omega \, : \, \mbox{dist}(x,\partial \Omega)> \delta\}.\]
Let $M>0$ such that $|\nabla u_\eps|\leq M$ on $\Omega_\delta$ and define the truncated operator
\[ \bar{F}_p(X,q,x):= -\min\{M,|q|\}^{p-2}(\mbox{Trace}(X) + 2\nabla \log f(x) \cdot q + (p-2)|q|^{-2}q\cdot X q).\]
Then $u$ is clearly a viscosity solution of $\bar{F}_p=0$ in $\Omega$, and $u_\eps$ is a viscosity solution of $f^2\bar{F}_p \geq \eps$ in $\Omega_\delta$. Since $f\geq \theta > 0$ for some $\theta>0$, and $\bar{F}_p$ satisfies
\[\bar{F}_p(y,q,Y) - \bar{F}_p(x,q,X) \leq 2 M^{p-2}[\nabla f]_{0,1}|q| |x-y|,\]
for all $q\in \R^d$, $x,y\in \Omega_\delta$ and symmetric matrices $X\leq Y$, we can invoke the comparison principle for viscosity solutions~\cite{crandall1992user} to obtain 
\[\max_{\partial \Omega_\delta} (u - u_\eps) = \max_{\bar{\Omega_\delta}} (u - u_\eps). \]
Since $u,u_\eps$ are uniformly continuous on $\R^d$, we can send $\delta\to 0^+$ to find that $u \leq u_\eps$ on $\R^d$.

3. We now send $\eps\to 0^+$ to show that $u\leq u_0$. Using $\phi=\max\{u_\eps-u_0-\delta,0\}$ as a test function in the definition of weak solution of \eqref{eq:pLapeps} for $u_\eps$ and $u_0$ and subtracting, we have
\[ \int_{A_\delta} f^2|\nabla u_\eps - \nabla u_0|^p \, dx \leq C\int_{A_\delta} f^2(|\nabla u_\eps|^{p-2}\nabla u_\eps - |\nabla u_0|^{p-2}\nabla u_0) \cdot (\nabla u_\eps - \nabla u_0) \, dx \leq C\eps,\]
where $A_\delta = \{ x \in (0,1)^d \, : \, u_\eps(x) - u_0(x) > \delta\}$ and $\delta >0$. Sending $\delta\to 0^+$ we find that
\[\int_{\{u_\eps > u\}} f^2|\nabla u_\eps - \nabla u_0|^p \, dx \leq C\eps.\]
A similar argument gives
\[\int_{\{u_\eps < u\}} f^2|\nabla u_\eps - \nabla u_0|^p \, dx \leq C\eps.\]
By Morrey's inequality $u_\eps \to u_0$ uniformly on $\R^d$ as $\eps\to 0^+$. Therefore $u\leq u_0$.

4. To see that $u\geq u_0$, we apply the argument above to $v:=-u$ to find that $v\leq -u_0$, or $u \geq u_0$. Therefore $u=u_0$, which completes the proof.
\end{proof}

We now give the proof of our main result. 
\begin{proof}[Proof of  Theorem \ref{thm:main}]
The proof is split into several steps.

1. Since 
\[\lim_{n\to \infty} \frac{nh_n^q}{\log(n)}=\infty,\]
we can apply the Borel-Cantelli Lemma and Theorems \ref{thm:holder}, \ref{thm:consistency2}, \ref{thm:liconsistency}, and Lemma \ref{lem:approximation1} to show that with probability one
\begin{equation}\label{eq:Lpcon}
\lim_{\substack{n\to \infty\cr x\to x_0}} \frac{1}{h_n^2}L^p_n\phi(x) = \frac{C_2}{C_1}f^{-2}|\nabla \phi|^{2-p} \mbox{div}(f^2|\nabla \phi|^{p-2} \nabla \phi)\big\vert_{x=x_0}
\end{equation}
for all $x_0\in \T^d\setminus \O$ and $\phi \in C^\infty(\R^d)$ with $\nabla \phi(x_0)\neq 0$, and 
\begin{equation}\label{eq:holder2}
|u_n(x)-u_n(y)|\leq C(|x-y|^{1-d/p} + h_n^{1-d/p})
\end{equation}
for $n$ sufficiently large. For the remainder of the proof, we fix a realization in this probability one event.

2. Let $j_n:\T^d \to \X_n$ be the closest point projection, which satisfies
\[|x-j_n(x)| = \min_{y\in \X_n}|x -y|.\]
Define  $v_n:\T^d\to \R$ by  $v_n(x)=u_n(j_n(x))$.   
Since  $|x -j_n(x)|\leq r_n$, where $r_n$ is defined in \eqref{eq:r}, it follows from \eqref{eq:holder2} that for any  $x,y\in \T^d$  
\begin{align*}
|v_n(x) -v_n(y)|&\leq C(|j_n(x) -j_n(y)|^{1-d/p} +h_n^{1-d/p})\\
&=C(|j_n(x) -x +y -j_n(y) +x -y|^{1-d/p} +h_n^{1-d/p})\\
&\leq C(|x -y|^{1-d/p} +2r_n^{1-d/p} +h_n^{1-d/p}).
\end{align*}
Since  $r_n,h_n\to 0$ as  $n\to \infty$, we can use the Arzel\`a-Ascoli Theorem (see the appendix in \cite{calder2015PDE}) to extract a subsequence, denote again by  $v_n$, and a H\"older continuous function  $u\in C^{0,1-d/p}(\T^d)$ such that  $v_n\to u$ uniformly on  $\T^d$ as  $n\to \infty$. Since  $u_n(x)=v_n(x)$ for all  $x\in \X_n$ we have      
\begin{equation}\label{eq:uniform}
\lim_{n\to \infty}\max_{x\in \X_n}|u_n(x) - u(x)|=0.  
\end{equation}
We claim that  $u$ is the unique viscosity solution of \eqref{eq:pLap}, which will complete the proof. 

3. We first show that  $u$ is a viscosity subsolution  of \eqref{eq:pLap}. Let $x_0\in \T^d\setminus \O$ and $\varphi \in C^\infty(\R^d)$  such that  $u -\varphi $  has a strict global maximum at the point $x_0$ and $\nabla  \varphi (x_0)\neq  0$. We need to show that
\[\mbox{div}(f^2|\nabla \phi|^{p-2} \nabla \phi)\big\vert_{x=x_0}\geq 0.\]
By \eqref{eq:uniform} there exists a sequence of points  $x_n\in \X_n$   such that $u_n -\varphi $   attains its global maximum at $x_n$ and  $x_n\to x_0$ as $n\to \infty$.          Therefore 
\[u_n(x_n) -u_n(x)\geq \varphi (x_n) -\varphi (x)\quad \mbox{  for all }x\in \X_n.\] 
Since  $x_0 \not\in \O$, we have that  $x_n \not\in \O$ for  $n$ sufficiently large and so $L^p_nu_n(x_n) \leq L^p_n \varphi(x_n)$. By \eqref{eq:Lpcon} we have
\[ 0=\lim_{n\to \infty}\frac{1}{h_n^2} L^p_nu_n(x_n)\leq\lim_{n\to \infty} \frac{1}{h_n^2} L^p_n\varphi (x_n)= \frac{C_2}{C_1}f^{-2}|\nabla \phi|^{2-p} \mbox{div}(f^2|\nabla \phi|^{p-2} \nabla \phi)\big\vert_{x=x_0}.\]
 Thus $u$ is a viscosity subsolution of \eqref{eq:pLap}. 

4. To verify the supersolution property, set  $v_n= -u_n$  and note that $L^p_nv_n= -L^p_nu_n=0$ and  $v_n\to  -u$  uniformly as $n\to \infty$.   The subsolution argument above shows that  $ -u$ is a viscosity subsolution of \eqref{eq:pLap}, and hence   $u$ is a viscosity supersolution.   This completes the proof.
\end{proof}

\section*{Acknowledgments}

The author gratefully acknowledges the support of NSF-DMS grant 1713691. The author is also grateful to the anonymous referees, whose suggestions have greatly improved the paper.

\appendix

\section{Consistency for the graph $p$-Laplacian}

Consider the graph $p$-Laplacian
\begin{equation}\label{eq:}
\Delta^G_p u(x) := \sum_{y\in \X_n}w_n(x,y)|u_n(x) -u_n(y)|^{p -2}(u_n(x) -u_{n}(y)).
\end{equation}
It is possible to prove Theorem \ref{thm:main} for the graph $p$-Laplacian provided $nh_n^p \to 0$ as $n\to \infty$. The proof is very similar to Theorem \ref{thm:main}; the main difference is the consistency result, which we sketch here for completeness.

 We require an integration lemma from~\cite{ishii2010class}.
\begin{lemma}\label{lem:int}
Let $\rho:[0,\infty)\to [0,\infty)$ be a nonnegative continuous function with compact support, and let $p_1,\dots,p_d$ be real numbers equal or larger than $1/2$. Then
\[ \int_{\R^d} \rho(|x|^2)|x_1|^{2p_1-1}\cdots |x_d|^{2p_d-1} \, dx = \frac{\Gamma(p_1)\cdots \Gamma(p_d)}{\Gamma(p_1+\cdots +p_d)} \int_0^\infty \rho(t) t^{p_1 +\cdots +p_d-1}\, dt,\]
where $\Gamma$ denotes the Gamma function 
\[\Gamma(t) = \int_0^\infty e^{-x}x^{t-1}\, dx.\]
\end{lemma}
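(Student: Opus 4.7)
The plan is to reduce the integral, via two changes of variables, to the classical Dirichlet integral over the simplex, whose value is then given by the Beta/Gamma identity.

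First, I would use the evenness of $\rho(|x|^2)|x_1|^{2p_1-1}\cdots|x_d|^{2p_d-1}$ in each coordinate to restrict to the positive orthant, paying a factor $2^d$:
\begin{equation*}
\int_{\R^d} \rho(|x|^2)|x_1|^{2p_1-1}\cdots|x_d|^{2p_d-1}\,dx = 2^d\int_{[0,\infty)^d}\rho(|x|^2)\,x_1^{2p_1-1}\cdots x_d^{2p_d-1}\,dx.
\end{equation*}
Next I would apply the substitution $y_i = x_i^2$ for $i=1,\ldots,d$, which has Jacobian $\prod_i \tfrac{1}{2}y_i^{-1/2}$ and is well-defined on $(0,\infty)^d$. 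The condition $p_i \geq 1/2$ guarantees that $y_i^{p_i - 1}$ is locally integrable at the origin, so nothing is lost. After simplification,
\begin{equation*}
2^d\int_{[0,\infty)^d}\rho(|x|^2)\,x_1^{2p_1-1}\cdots x_d^{2p_d-1}\,dx = \int_{[0,\infty)^d}\rho(y_1+\cdots+y_d)\,y_1^{p_1-1}\cdots y_d^{p_d-1}\,dy.
\end{equation*}

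The second and main step is a polar-type substitution adapted to the simplex. Set $t = y_1+\cdots+y_d$ and $u_i = y_i/t$ for $i=1,\ldots,d-1$, with $u_d := 1 - u_1 - \cdots - u_{d-1}$. The map $(y_1,\ldots,y_d)\mapsto(u_1,\ldots,u_{d-1},t)$ carries $(0,\infty)^d$ bijectively onto $\Delta_{d-1}\times(0,\infty)$, where $\Delta_{d-1} = \{u \in (0,1)^{d-1} : u_1+\cdots+u_{d-1}<1\}$, and the Jacobian determinant is $t^{d-1}$. Substituting yields
\begin{equation*}
\int_{[0,\infty)^d}\rho(y_1+\cdots+y_d)\,y_1^{p_1-1}\cdots y_d^{p_d-1}\,dy = \left(\int_0^\infty \rho(t)\,t^{p_1+\cdots+p_d-1}\,dt\right)\cdot I,
\end{equation*}
where
\begin{equation*}
I = \int_{\Delta_{d-1}} u_1^{p_1-1}\cdots u_{d-1}^{p_{d-1}-1}(1-u_1-\cdots-u_{d-1})^{p_d-1}\,du.
\end{equation*}

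The final step is to recognize $I$ as the classical Dirichlet integral and invoke its known value $I = \Gamma(p_1)\cdots\Gamma(p_d)/\Gamma(p_1+\cdots+p_d)$. One can either cite this as standard or give a short inductive proof: for $d=2$ the identity $\int_0^1 u^{p_1-1}(1-u)^{p_2-1}\,du = B(p_1,p_2) = \Gamma(p_1)\Gamma(p_2)/\Gamma(p_1+p_2)$ is the definition of the Beta function, and the inductive step follows by integrating out $u_{d-1}$, substituting $u_{d-1} = (1-u_1-\cdots-u_{d-2})v$, and applying the Beta integral again. Combining all three steps gives the stated formula.

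The most delicate point is simply verifying the Jacobians and the compatibility of domains in the two substitutions; the hypothesis $p_i \geq 1/2$ is exactly what is needed so that all the intermediate integrands are locally integrable at the boundary of the relevant domains. No further analytic obstacle arises.
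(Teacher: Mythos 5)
Your proof is correct. The paper does not include a proof of Lemma~\ref{lem:int}; it cites the result directly from~\cite{ishii2010class}, so there is no in-paper argument to compare against. Your route---restrict to the positive orthant picking up a factor $2^d$, substitute $y_i = x_i^2$ (Jacobian $\prod_i \tfrac{1}{2}y_i^{-1/2}$, which cancels the $2^d$ and turns the integrand into $\rho(y_1+\cdots+y_d)\,y_1^{p_1-1}\cdots y_d^{p_d-1}$), then pass to simplex coordinates $y_i = u_i t$ (Jacobian $t^{d-1}$, seen by adding the first $d-1$ rows to the last), and finally invoke the Dirichlet integral---is the standard derivation and is complete; the powers of $t$ combine to $p_1+\cdots+p_d-1$ as required. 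One small correction to your closing remark: the hypothesis $p_i \geq 1/2$ is sufficient but not ``exactly what is needed.'' Local integrability of $|x_i|^{2p_i-1}$ near $x_i=0$ (equivalently of $y_i^{p_i-1}$ near $y_i=0$) requires only $p_i>0$; the threshold $1/2$ appears in the statement because that is the smallest exponent arising in the paper's application in Theorem~\ref{thm:consistency_p}, where some coordinates carry a factor $|x_j|^0 = |x_j|^{2\cdot\frac{1}{2}-1}$.
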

We prove here consistency in expectation. Arguments similar to the proof of Theorem \ref{thm:consistency2} can be used to obtain consistency with high probability and almost surely as $n\to \infty$.
\begin{theorem}\label{thm:consistency_p}
Let $\phi \in C^3(\R^d)$ and $p\geq 3$. Then
\begin{equation}\label{eq:consistency_p}
\E[\Delta^G_p \phi(x)]=\frac{1}{2}c(p,d)f^{-1}\mbox{\normalfont div}(f^2|\D \phi|^{p-2}\D \phi)nh^{d+p} + R(x)nh^{d+p+1}, 
\end{equation}
where
\begin{equation}\label{eq:R}
|R(x)| \leq C(c_1^{p-2}(c_1+c_2+c_3) + c_1^{p-3}c_2^3h^2) \ \ \ \mbox{for all } x\in \R^d,
\end{equation}
and $c_k = \|D^k\phi\|_{L^\infty(\R^d)}$.
\end{theorem}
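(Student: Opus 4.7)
The proof is a direct Taylor-expansion computation, with the hypothesis $p\geq 3$ used to control the regularity of the auxiliary function $g(t):=|t|^{p-2}t$. The plan has three stages: (i) rewrite the expectation as a single integral and change variables to the unit ball; (ii) expand each factor and integrate term by term, using Lemma \ref{lem:int} to evaluate the surviving moments; (iii) verify that the resulting coefficient matches $\tfrac{1}{2}c(p,d)\,f^{-1}\mathrm{div}(f^2|\nabla\phi|^{p-2}\nabla\phi)$.

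For (i), since the $y_i$ are i.i.d.\ with density $f$, one writes
\[\E[\Delta^G_p\phi(x)] = n\!\int_{\T^d}\!\Phi(|x-y|/h)\,g(\phi(x)-\phi(y))\,f(y)\,dy = nh^d I(x,h),\]
where $I(x,h) := \int_{B(0,2)}\Phi(|z|)\,g(\phi(x)-\phi(x+hz))\,f(x+hz)\,dz$ after the substitution $y = x+hz$. For (ii), set $q = \nabla\phi(x)$, $A = D^2\phi(x)$ and Taylor expand $\phi(x+hz)-\phi(x) = h(q\cdot z)+\tfrac{h^2}{2}z^{\top}Az + O(h^3 c_3)$ together with $f(x+hz) = f(x)+h\nabla f(x)\cdot z+O(h^2)$. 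Using $g'(t)=(p-1)|t|^{p-2}$ and $g''(t)=(p-1)(p-2)|t|^{p-3}\mathrm{sgn}(t)$, the second-order Taylor expansion of $g$ around $-h(q\cdot z)$ is valid pointwise for $q\cdot z\neq 0$, and the quadratic remainder $|t|^{p-3}$ is integrable against $\Phi(|z|)$ by Lemma \ref{lem:int} because $p\geq 3$. The leading $O(h^{p-1})$ contribution $-h^{p-1}f(x)\Phi(|z|)|q\cdot z|^{p-2}(q\cdot z)$ integrates to zero by the oddness $z\mapsto -z$, leaving two $O(h^p)$ contributions:
\[-h^p\!\int\!\Phi(|z|)|q\cdot z|^{p-2}(q\cdot z)(\nabla f\cdot z)\,dz\quad\text{and}\quad -\tfrac{p-1}{2}h^p f(x)\!\int\!\Phi(|z|)|q\cdot z|^{p-2}z^{\top}Az\,dz.\]

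For (iii), evaluate both integrals in coordinates where $q=|q|e_1$; everything reduces to the two one-dimensional moments $\A_p:=\int\Phi(|z|)|z_1|^p\,dz$ and $\mathcal{B}_p:=\int\Phi(|z|)|z_1|^{p-2}z_2^2\,dz$. Lemma \ref{lem:int}, together with the Gamma-function identities $\Gamma(\tfrac{p+1}{2})=\tfrac{p-1}{2}\Gamma(\tfrac{p-1}{2})$ and $\Gamma(3/2)=\tfrac12\Gamma(1/2)$, gives the pivotal identity $\A_p=(p-1)\mathcal{B}_p$. Substituting and rotating back, the two surviving $O(h^p)$ contributions assemble into
\[-\tfrac{(p-1)\mathcal{B}_p}{2}h^p f(x)^{-1}\bigl[2f(\nabla f\cdot q)|q|^{p-2} + f^2|q|^{p-2}\Delta\phi + (p-2)f^2|q|^{p-4}q^{\top}Aq\bigr],\]
and the bracket is precisely $\mathrm{div}(f^2|\nabla\phi|^{p-2}\nabla\phi)$ by a direct product-rule expansion. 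Multiplying by $nh^d$ yields the leading-order formula with $c(p,d) = -(p-1)\mathcal{B}_p$.

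The main obstacle I expect is the quantitative remainder bound \eqref{eq:R}. Three sources of remainder must be tracked simultaneously: the cubic Taylor remainder from $\phi$, the quadratic remainder from $f$, and, most delicately, the higher-order Taylor remainder of $g$, whose derivatives blow up near $t=0$ like $|t|^{p-3}$ and $|t|^{p-4}$. Integrability of these singularities against $\Phi(|z|)$ is ensured by Lemma \ref{lem:int}—$\int_{B(0,2)}\Phi(|z|)|z_1|^{p-3}\,dz<\infty$ follows from taking $p_1=(p-2)/2\geq 1/2$—but one must then bound each cross-term carefully, noting that every factor of $\nabla\phi$ coming out of $g'$ or $g''$ contributes a bound of $c_1$, every factor of $z^{\top}Az$ contributes $c_2$, and so on. This bookkeeping produces the two summands $c_1^{p-2}(c_1+c_2+c_3)$ (from the second-order expansion) and $c_1^{p-3}c_2^3h^2$ (from the first next higher-order correction) in \eqref{eq:R}.
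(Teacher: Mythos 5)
Your proposal follows essentially the same route as the paper's proof: rewrite the expectation as a single integral, rescale $y=x+hz$, Taylor-expand $\phi$, $f$, and the power nonlinearity $g(t)=|t|^{p-2}t$ around the linear part, kill the $O(h^{p-1})$ term by oddness, evaluate the two surviving $O(h^p)$ moments after rotating $q$ onto a coordinate axis, and close with the $\Gamma$-identity $\mathcal{A}_p=(p-1)\mathcal{B}_p$ from Lemma \ref{lem:int}. Two small remarks worth flagging. First, you carry $\Phi(|z|)$ to the first power while the paper's computation carries $\Phi(|z|)^p$; the paper's displayed definition of $\Delta^G_p$ in the appendix has $w_n$ to the first power (a typo, as the energy $J_p$ and the ensuing proof use $w_n^p$), so you should keep an eye on which power you intend, though the structure of the argument and the identity $\mathcal{A}_p=(p-1)\mathcal{B}_p$ are unaffected. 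Second, you arrive at $c(p,d)=-(p-1)\mathcal{B}_p<0$ whereas the paper's $c(p,d)=\int\Phi^p|x_d|^p\,dx>0$; your sign is actually correct (check the $p=2$ case: $\Delta^G_2\phi<0$ at a point where $\phi$ is convex, so the proportionality constant to $\mathrm{div}(f^2\nabla\phi)$ must be negative), the paper having silently dropped the minus sign when substituting $\phi(0)-\phi(y)=-\phi(y)$. Finally, a minor conceptual slip in your discussion: for $p\geq 3$, $g''\sim|t|^{p-3}$ does \emph{not} blow up near $t=0$ (it is bounded), so integrability of the remainder against $\Phi$ is automatic rather than delicate; the hypothesis $p\geq 3$ is used precisely to guarantee this, not to rescue a singular integral via Lemma \ref{lem:int}.
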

\begin{proof}
We may assume $x=0$ and $\phi(0)=0$. We write $h=h_n$ for simplicity. Then we have
\[\E[\dJ \phi(0)] = n\int_{\R^d} \Phi_h\left(|y|\right)^p|\phi(y)|^{p-2}\phi(y)f(y) \, dy.\]
Set $x = y/h$ to find that
\[\E[\dJ \phi(0)] = nh^d\int_{\R^d} \Phi(|x|)^p\Psi(\phi(xh))f(xh) \, dx,\]
where
\[\Psi(t) = |t|^{p-2}t.\]
 Then
\begin{equation}
\Psi(t) = \Psi(a) + \Psi'(a)(t-a) + O(T^{p-3}|t-a|^3) \ \ \ \mbox{for } a,t \in [-T,T].
\label{eq:PsiTS}
\end{equation}
Letting $q=\D \phi(0)$ and $Q = \nabla^2\phi(0)$ we have
\begin{equation}
\phi(xh) = O(c_1|x|h),
\label{eq:phiTS0}
\end{equation}
\begin{equation}
\phi(xh) = hq \cdot x + O(c_2|xh|^2),
\label{eq:phiTS1}
\end{equation}
and
\begin{equation}\label{eq:phiTS}
\phi(xh) = hq\cdot x + \frac{h^2}{2}x\cdot Qx + O(c_3|xh|^3),
\end{equation}
Therefore
\begin{align*}
 \hspace{1cm}\Psi(\phi(xh)) &= \Psi(hq\cdot x) + \Psi'(hq\cdot x)(\phi(xh)-hq\cdot x) + O(c_1^{p-3}|xh|^{p-3}c_2^3|xh|^6)\\
&=\Psi(hq\cdot x) + \Psi'(hq\cdot x)\left(\frac{h^2}{2}x\cdot Qx + O(c_3|x|^3h^3)\right) + O(c_1^{p-3}c_2^3|xh|^{p+3})\\
&=\Psi(hq\cdot x) + \frac{h^2}{2}\Psi'(hq\cdot x)x\cdot Qx + O(c_1^{p-2}c_3|xh|^{p+1} + c_1^{p-3}c_2^3|xh|^{p+3}),
\end{align*}
and we have
\begin{align*}
 \Psi(\phi(xh))f(xh)&\\
&\hspace{-2cm}= f(0)\Psi(hq\cdot x) + f(0)\frac{h^2}{2}\Psi'(hq\cdot x)(x\cdot Qx) + (f(xh)-f(0))\Psi(hq\cdot x) \\
&\hspace{-1.5cm}+ O\left( c_1^{p-2}(c_2+c_3)|xh|^{p+1} + c_1^{p-2}c_3|xh|^{p+2} + c_1^{p-3}c_2^3|xh|^{p+3} + c_1^{p-3}c_2^3|xh|^{p+4} \right)\\
&\hspace{-2cm}= f(0)\Psi(hq\cdot x) + f(0)\frac{h^2}{2}\Psi'(hq\cdot x)(x\cdot Qx) + h(\nabla f(0)\cdot x)\Psi(hq\cdot x) \\
&\hspace{-1.5cm}+ O\left( c_1^{p-2}(c_1+c_2+c_3)|xh|^{p+1} + c_1^{p-2}c_3|xh|^{p+2} + c_1^{p-3}c_2^3|xh|^{p+3} + c_1^{p-3}c_2^3|xh|^{p+4} \right).
\end{align*}
Integrating against $\Phi(|x|)^p$, the first term is odd and vanishes, so we get
\begin{align*}
&\int_{\R^d}\Phi(|x|)^p|\phi(xh)|^{p-2}\phi(xh)f(xh)\, dx\\
&=f(0)h^p\int_{\R^d} \Phi(|x|)^p\left( (\tfrac{1}{2}x\cdot Qx)\Psi'(q\cdot x) + (\nabla \log f(0)\cdot x) \Psi(q\cdot x) \right)\, dx\\
&\hspace{8mm}+ O\left(c_1^{p-2}(c_1+c_2+c_3)h^{p+1} + c_1^{p-3}c_2^3h^{p+3}\right).
\end{align*}
Therefore
\begin{align}
 \frac{\E[\dJ\phi(0)] }{nh^{d+p}f(0)} &= \int_{\R^d} \Phi(|x|)^p\left( (\tfrac{1}{2}x\cdot Qx)\Psi'(q\cdot x) + (\nabla \log f(0)\cdot x) \Psi(q\cdot x) \right)\, dx \nonumber \\
&\hspace{1.5in} + O\left(c_1^{p-2}(c_1+c_2+c_3)h + c_1^{p-3}c_2^3h^{3}\right)\nonumber \\
&= \int_{\R^d} \Phi(|x|)^p|q\cdot x|^{p-2}\left( \tfrac{1}{2}(p-1)(x\cdot Qx) + (\nabla \log f(0)\cdot x)(q\cdot x) \right)\, dx \nonumber \\
&\hspace{1.5in} + O\left(c_1^{p-2}(c_1+c_2+c_3)h + c_1^{p-3}c_2^3h^{3}\right)\nonumber\\
&=A+B+O\left(c_1^{p-2}(c_1+c_2+c_3)h + c_1^{p-3}c_2^3h^{3}\right),
\label{eq:Eint}
\end{align}
where
\begin{equation}\label{eq:Ahere}
A = \frac{p-1}{2}\sum_{i,j=1}^d Q_{ij}\int_{\R^d} \Phi(|x|)^p |q\cdot x|^{p-2}x_ix_j \, dx,
\end{equation}
and
\begin{equation}\label{eq:Bhere}
B = \sum_{i=1}^d(\log f)_{x_i}\int_{\R^d}\Phi(|x|)^p |q\cdot x|^{p-2} (q\cdot x) x_i\, dx.
\end{equation}

Let us tackle $A$ first. We may assume that $q\neq 0$. Let $O$ be an orthogonal transformation so that $Oe_d = q/|q|$. In particular $O_{id} = [Oe_d]_i= q_i/|q|$. Making the change of variables $y=O^Tx$ we have
\begin{align*}
B &= \sum_{i=1}^d(\log f)_{x_i}\int_{\R^d}\Phi(|y|)^p ||q|y_d|^{p-2}(|q|y_d) [Oy]_i\, dy\\
&= |q|^{p-1}\sum_{i,j=1}^d (\log f)_{x_i} O_{ij}\int_{\R^d}\Phi(|y|)^p |y_d|^{p-2} y_dy_j\, dy.
\end{align*}
If $j\neq d$ in the sum above, then the integral vanishes, since the integrand is odd. Therefore
\begin{equation}\label{eq:Bfinal}
 B= |q|^{p-1}\sum_{i=1}^d (\log f)_{x_i} O_{id}\int_{B_2}\Phi(|y|)^p |y_d|^p\, dy= c(p,d)|q|^{p-2} (\D \log f)\cdot q,
\end{equation}
where
\[c(p,d) =  \int_{\R^d}\Phi(|x|)^p |x_d|^p\, dx.\]

For $A$, we again make the change of variables $y=O^Tx$. Then  we have
\begin{align*}
A &= \frac{p-1}{2}|q|^{p-2}\sum_{i,j=1}^d Q_{ij}\sum_{k,\ell=1}^d O_{ik}O_{j\ell}\int_{B_2} \Phi(|y|)^p |y_d|^{p-2}y_ky_\ell \, dy\\
&= \frac{p-1}{2}|q|^{p-2}\sum_{k=1}^d[O^TQO]_{kk}\int_{B_2} \Phi(|y|)^p |y_d|^{p-2}y_k^2 \, dy\\
&= \frac{p-1}{2}|q|^{p-2}\left( \sigma(p,d)\mbox{Tr}(Q) + (c(p,d)-\sigma(p,d))|q|^{-2}q\cdot Qq\right),
\end{align*}
where
\[\sigma(p,d) = \int_{\R^d} \Phi(|x|)^p |x_d|^{p-2} x_k^2\, dx\]
for any $k < d$. By Lemma \ref{lem:int} we have
\[\sigma(p,d) = \frac{\Gamma\left(\frac{1}{2}\right)^{d-2}\Gamma\left(\frac{p-1}{2}\right)\Gamma\left(\frac{3}{2}\right)}{\Gamma\left(\frac{d+p}{2}\right)}\int_0^\infty \Phi(\sqrt{t})t^{\frac{d+p}{2}-1}\, dt,\]
and
\[c(p,d) = \frac{\Gamma\left(\frac{1}{2}\right)^{d-1}\Gamma\left(\frac{p+1}{2}\right)}{\Gamma\left(\frac{d+p}{2}\right)}\int_0^\infty \Phi(\sqrt{t})t^{\frac{d+p}{2}-1}\, dt.\]
Using the identity $\Gamma(t+1)=t\Gamma(t)$ we find that
\[c(p,d) = (p-1)\sigma(p,d).\]
It follows that
\[A = \frac{1}{2}c(p,d)|q|^{p-2}(\mbox{Tr}(Q) + (p-2)|q|^{-2} q^TQq^T).\]

Combining this with \eqref{eq:Eint} and \eqref{eq:Bfinal} yields
\begin{align*}
&\frac{\E[\dJ\phi(0)] }{nh^{d+p}} \\
&= \frac{1}{2}c(p,d)f|\nabla \phi|^{p-2}\left( \Delta \phi + 2\nabla \log f\cdot \nabla \phi + (p-2)\Delta_\infty \phi \right)\\
&\hspace{2in}+O\left(c_1^{p-2}(c_1+c_2+c_3)h + c_1^{p-3}c_2^3h^{3}\right)\\
&=\frac{1}{2}c(p,d)f^{-1}\div(f^2|\D \phi|^{p-2}\D \phi) + O\left(c_1^{p-2}(c_1+c_2+c_3)h + c_1^{p-3}c_2^3h^{3}\right).
\end{align*}

If $q=0$ then from \eqref{eq:Eint} we have
\[\frac{\E[\dJ\phi(0)] }{nh^{d+p}} = O\left(c_1^{p-2}(c_1+c_2+c_3)h + c_1^{p-3}c_2^3h^{3}\right).\]
This completes the proof.
\end{proof}


\begin{thebibliography}{10}

\bibitem{ssl}
Olivier Chapelle, Bernhard Scholkopf, and Alexander Zien.
\newblock {\em Semi-supervised learning}.
\newblock MIT, 2006.

\bibitem{zhu2003semi}
Xiaojin Zhu, Zoubin Ghahramani, John Lafferty, et~al.
\newblock Semi-supervised learning using gaussian fields and harmonic
  functions.
\newblock In {\em International Conference on Machine Learning}, volume~3,
  pages 912--919, 2003.

\bibitem{zhou2005learning}
Dengyong Zhou, Jiayuan Huang, and Bernhard Sch{\"o}lkopf.
\newblock Learning from labeled and unlabeled data on a directed graph.
\newblock In {\em Proceedings of the 22nd International Conference on Machine
  Learning}, pages 1036--1043. ACM, 2005.

\bibitem{zhou2004learning}
Dengyong Zhou, Olivier Bousquet, Thomas~Navin Lal, Jason Weston, and Bernhard
  Sch{\"o}lkopf.
\newblock Learning with local and global consistency.
\newblock {\em Advances in Neural Information Processing Systems},
  16(16):321--328, 2004.

\bibitem{zhou2004ranking}
Dengyong Zhou, Jason Weston, Arthur Gretton, Olivier Bousquet, and Bernhard
  Sch{\"o}lkopf.
\newblock Ranking on data manifolds.
\newblock {\em Advances in Neural Information Processing Systems}, 16:169--176,
  2004.

\bibitem{ando2007learning}
Rie~Kubota Ando and Tong Zhang.
\newblock Learning on graph with laplacian regularization.
\newblock {\em Advances in Neural Information Processing Systems}, 19:25, 2007.

\bibitem{he2004manifold}
Jingrui He, Mingjing Li, Hong-Jiang Zhang, Hanghang Tong, and Changshui Zhang.
\newblock Manifold-ranking based image retrieval.
\newblock In {\em Proceedings of the 12th Annual ACM International Conference
  on Multimedia}, pages 9--16. ACM, 2004.

\bibitem{he2006generalized}
Jingrui He, Mingjing Li, H-J Zhang, Hanghang Tong, and Changshui Zhang.
\newblock Generalized manifold-ranking-based image retrieval.
\newblock {\em IEEE Transactions on image processing}, 15(10):3170--3177, 2006.

\bibitem{wang2013multi}
Yang Wang, Muhammad~Aamir Cheema, Xuemin Lin, and Qing Zhang.
\newblock Multi-manifold ranking: Using multiple features for better image
  retrieval.
\newblock In {\em Pacific-Asia Conference on Knowledge Discovery and Data
  Mining}, pages 449--460. Springer, 2013.

\bibitem{yang2013saliency}
Chuan Yang, Lihe Zhang, Huchuan Lu, Xiang Ruan, and Ming-Hsuan Yang.
\newblock Saliency detection via graph-based manifold ranking.
\newblock In {\em Proceedings of the IEEE conference on Computer Vision and
  Pattern Recognition}, pages 3166--3173, 2013.

\bibitem{zhou2011iterated}
Xueyuan Zhou, Mikhail Belkin, and Nathan Srebro.
\newblock An iterated graph laplacian approach for ranking on manifolds.
\newblock In {\em Proceedings of the 17th ACM SIGKDD International Conference
  on Knowledge Discovery and Data Mining}, pages 877--885. ACM, 2011.

\bibitem{xu2011efficient}
Bin Xu, Jiajun Bu, Chun Chen, Deng Cai, Xiaofei He, Wei Liu, and Jiebo Luo.
\newblock Efficient manifold ranking for image retrieval.
\newblock In {\em Proceedings of the 34th international ACM SIGIR Conference on
  Research and Development in Information Retrieval}, pages 525--534. ACM,
  2011.

\bibitem{nadler2009semi}
Boaz Nadler, Nathan Srebro, and Xueyuan Zhou.
\newblock Semi-supervised learning with the graph {L}aplacian: The limit of
  infinite unlabelled data.
\newblock In {\em Neural Information Processing Systems (NIPS)}, 2009.

\bibitem{el2016asymptotic}
Ahmed El~Alaoui, Xiang Cheng, Aaditya Ramdas, Martin~J Wainwright, and
  Michael~I Jordan.
\newblock Asymptotic behavior of lp-based {L}aplacian regularization in
  semi-supervised learning.
\newblock In {\em 29th Annual Conference on Learning Theory}, pages 879--906,
  2016.

\bibitem{bridle2013p}
Nick Bridle and Xiaojin Zhu.
\newblock p-voltages: Laplacian regularization for semi-supervised learning on
  high-dimensional data.
\newblock In {\em Eleventh Workshop on Mining and Learning with Graphs
  (MLG2013)}, 2013.

\bibitem{alamgir2011phase}
Morteza Alamgir and Ulrike~V Luxburg.
\newblock Phase transition in the family of p-resistances.
\newblock In {\em Advances in Neural Information Processing Systems}, pages
  379--387, 2011.

\bibitem{kyng2015algorithms}
Rasmus Kyng, Anup Rao, Sushant Sachdeva, and Daniel~A Spielman.
\newblock Algorithms for lipschitz learning on graphs.
\newblock In {\em Proceedings of The 28th Conference on Learning Theory}, pages
  1190--1223, 2015.

\bibitem{luxburg2004distance}
Ulrike~von Luxburg and Olivier Bousquet.
\newblock Distance-based classification with lipschitz functions.
\newblock {\em Journal of Machine Learning Research}, 5(Jun):669--695, 2004.

\bibitem{aronsson2004tour}
Gunnar Aronsson, Michael Crandall, and Petri Juutinen.
\newblock A tour of the theory of absolutely minimizing functions.
\newblock {\em Bulletin of the American Mathematical Society}, 41(4):439--505,
  2004.

\bibitem{lindqvist2017notes}
Peter Lindqvist.
\newblock {\em Notes on the p-Laplace equation}.
\newblock 2017.

\bibitem{peres2009tug}
Yuval Peres, Oded Schramm, Scott Sheffield, and David Wilson.
\newblock Tug-of-war and the infinity laplacian.
\newblock {\em Journal of the American Mathematical Society}, 22(1):167--210,
  2009.

\bibitem{lewicka2014game}
Marta Lewicka and Juan~J Manfredi.
\newblock Game theoretical methods in pdes.
\newblock {\em Bollettino dell'Unione Matematica Italiana}, 7(3):211--216,
  2014.

\bibitem{calderLip2017}
Jeff Calder.
\newblock {Consistency of Lipschitz learning with infinite unlabeled and finite
  labeled data}.
\newblock {\em arXiv:1710.10364}, 2017.

\bibitem{slepvcev2017analysis}
Dejan Slep{\v{c}}ev and Matthew Thorpe.
\newblock Analysis of $ p $-laplacian regularization in semi-supervised
  learning.
\newblock {\em arXiv preprint arXiv:1707.06213}, 2017.

\bibitem{lecun1998gradient}
Yann LeCun, L{\'e}on Bottou, Yoshua Bengio, and Patrick Haffner.
\newblock Gradient-based learning applied to document recognition.
\newblock {\em Proceedings of the IEEE}, 86(11):2278--2324, 1998.

\bibitem{hein2005intrinsic}
Matthias Hein and Jean-Yves Audibert.
\newblock Intrinsic dimensionality estimation of submanifolds in {Rd}.
\newblock In {\em Proceedings of the 22nd International Conference on Machine
  learning}, pages 289--296. ACM, 2005.

\bibitem{costa2006determining}
Jose~A Costa and Alfred~O Hero.
\newblock Determining intrinsic dimension and entropy of high-dimensional shape
  spaces.
\newblock In {\em Statistics and Analysis of Shapes}, pages 231--252. Springer,
  2006.

\bibitem{deng2009imagenet}
Jia Deng, Wei Dong, Richard Socher, Li-Jia Li, Kai Li, and Li~Fei-Fei.
\newblock Imagenet: A large-scale hierarchical image database.
\newblock In {\em Computer Vision and Pattern Recognition, 2009. CVPR 2009.
  IEEE Conference on}, pages 248--255. IEEE, 2009.

\bibitem{spielman2004nearly}
Daniel~A Spielman and Shang-Hua Teng.
\newblock Nearly-linear time algorithms for graph partitioning, graph
  sparsification, and solving linear systems.
\newblock In {\em Proceedings of the thirty-sixth annual ACM symposium on
  Theory of computing}, pages 81--90. ACM, 2004.

\bibitem{cohen2014solving}
Michael~B Cohen, Rasmus Kyng, Gary~L Miller, Jakub~W Pachocki, Richard Peng,
  Anup~B Rao, and Shen~Chen Xu.
\newblock Solving {SDD} linear systems in nearly m log 1/2 n time.
\newblock In {\em Proceedings of the 46th Annual ACM Symposium on Theory of
  Computing}, pages 343--352. ACM, 2014.

\bibitem{boyd2004convex}
Stephen Boyd and Lieven Vandenberghe.
\newblock {\em Convex optimization}.
\newblock Cambridge university press, 2004.

\bibitem{manfredi2015nonlinear}
Juan~J Manfredi, Adam~M Oberman, and Alexander~P Sviridov.
\newblock Nonlinear elliptic partial differential equations and p-harmonic
  functions on graphs.
\newblock {\em Differential Integral Equations}, 28(1--2):79--102, 2015.

\bibitem{tolksdorf1984regularity}
Peter Tolksdorf.
\newblock Regularity for a more general class of quasilinear elliptic
  equations.
\newblock {\em Journal of Differential equations}, 51(1):126--150, 1984.

\bibitem{lieberman1988boundary}
Gary~M Lieberman.
\newblock Boundary regularity for solutions of degenerate elliptic equations.
\newblock {\em Nonlinear Analysis: Theory, Methods \& Applications},
  12(11):1203--1219, 1988.

\bibitem{crandall1992user}
Michael~G Crandall, Hitoshi Ishii, and Pierre-Louis Lions.
\newblock User’s guide to viscosity solutions of second order partial
  differential equations.
\newblock {\em Bulletin of the American Mathematical Society}, 27(1):1--67,
  1992.

\bibitem{oberman2013finite}
Adam~M Oberman.
\newblock Finite difference methods for the infinity laplace and p-laplace
  equations.
\newblock {\em Journal of Computational and Applied Mathematics}, 254:65--80,
  2013.

\bibitem{barles1991convergence}
Guy Barles and Panagiotis~E Souganidis.
\newblock Convergence of approximation schemes for fully nonlinear second order
  equations.
\newblock {\em Asymptotic analysis}, 4(3):271--283, 1991.

\bibitem{hein2007graph}
Matthias Hein, Jean-Yves Audibert, and Ulrike~von Luxburg.
\newblock Graph {L}aplacians and their convergence on random neighborhood
  graphs.
\newblock {\em Journal of Machine Learning Research}, 8(Jun):1325--1368, 2007.

\bibitem{hein2006uniform}
Matthias Hein.
\newblock Uniform convergence of adaptive graph-based regularization.
\newblock In {\em International Conference on Computational Learning Theory},
  pages 50--64. Springer, 2006.

\bibitem{belkin2005towards}
Mikhail Belkin and Partha Niyogi.
\newblock Towards a theoretical foundation for {L}aplacian-based manifold
  methods.
\newblock In {\em International Conference on Computational Learning Theory},
  pages 486--500. Springer, 2005.

\bibitem{gine2006empirical}
Evarist Gin{\'e}, Vladimir Koltchinskii, et~al.
\newblock Empirical graph laplacian approximation of {L}aplace--{B}eltrami
  operators: Large sample results.
\newblock In {\em High dimensional probability}, pages 238--259. Institute of
  Mathematical Statistics, 2006.

\bibitem{hein2005graphs}
Matthias Hein, Jean-Yves Audibert, and Ulrike Von~Luxburg.
\newblock From graphs to manifolds--weak and strong pointwise consistency of
  graph {L}aplacians.
\newblock In {\em International Conference on Computational Learning Theory},
  pages 470--485. Springer, 2005.

\bibitem{singer2006graph}
Amit Singer.
\newblock From graph to manifold {L}aplacian: The convergence rate.
\newblock {\em Applied and Computational Harmonic Analysis}, 21(1):128--134,
  2006.

\bibitem{ting2010analysis}
Daniel Ting, Ling Huang, and Michael~I Jordan.
\newblock An analysis of the convergence of graph {L}aplacians.
\newblock In {\em Proceedings of the 27th International Conference on Machine
  Learning (ICML-10)}, pages 1079--1086, 2010.

\bibitem{hoeffding1963probability}
Wassily Hoeffding.
\newblock Probability inequalities for sums of bounded random variables.
\newblock {\em Journal of the American Statistical Association},
  58(301):13--30, 1963.

\bibitem{chernoff1952measure}
Herman Chernoff.
\newblock A measure of asymptotic efficiency for tests of a hypothesis based on
  the sum of observations.
\newblock {\em The Annals of Mathematical Statistics}, pages 493--507, 1952.

\bibitem{bernstein1924modification}
Sergei Bernstein.
\newblock On a modification of {C}hebyshev's inequality and of the error
  formula of {L}aplace.
\newblock {\em Ann. Sci. Inst. Sav. Ukraine, Sect. Math}, 1(4):38--49, 1924.

\bibitem{boucheron2013concentration}
St{\'e}phane Boucheron, G{\'a}bor Lugosi, and Pascal Massart.
\newblock {\em Concentration inequalities: A nonasymptotic theory of
  independence}.
\newblock Oxford university press, 2013.

\bibitem{silverman2018density}
Bernard~W Silverman.
\newblock {\em Density estimation for statistics and data analysis}.
\newblock Routledge, 2018.

\bibitem{juutinen2001equivalence}
Petri Juutinen, Peter Lindqvist, and Juan~J Manfredi.
\newblock On the equivalence of viscosity solutions and weak solutions for a
  quasi-linear equation.
\newblock {\em SIAM Journal on Mathematical Analysis}, 33(3):699--717, 2001.

\bibitem{calder2015PDE}
Jeff Calder, Selim Esedo\=glu, and Alfred~O Hero~III.
\newblock A {PDE}-based approach to non-dominated sorting.
\newblock {\em SIAM Journal on Numerical Analysis}, 53(1):82--104, 2015.

\bibitem{ishii2010class}
Hitoshi Ishii and Gou Nakamura.
\newblock A class of integral equations and approximation of p-{L}aplace
  equations.
\newblock {\em Calculus of Variations and Partial Differential Equations},
  37(3-4):485--522, 2010.

\end{thebibliography}
\end{document}